\newcommand{\reff}[1]{(\ref{#1})}
\theoremstyle{plain}
\newtheorem{theorem}{Theorem}[section]
 \newtheorem{proposition}[theorem]{Proposition}
\newtheorem{lem}[theorem]{Lemma}
 \newtheorem{lemma}[theorem]{Lemma}
\newtheorem{remark}{Remark}[section]
\newcommand{\mR}{{\mathbb R}}
 \def\beqlb{\begin{eqnarray}}\def\eeqlb{\end{eqnarray}}
 \def\beqnn{\begin{eqnarray*}}\def\eeqnn{\end{eqnarray*}}
 \def\mbb{\mathbb}
 \def\qed{\hfill$\Box$\medskip}
\newcommand{\bcen}{\begin{center}}
\newcommand{\ecen}{\end{center}}
\newcommand{\bgeqn}{\begin{equation}}
\newcommand{\edeqn}{\end{equation}}
\def\az{\alpha}
\def\gz{\gamma}
\def\dz{\delta}
\def\lz{\lambda}
\def\ez{\varepsilon}
\def\E{{\mbb  E}}
\def\mR{{\mbb  R}}
\renewcommand{\P}{\mathbb{P}}
\def\rar{\rightarrow}
\def\l{\left}
\def\r{\right}
\begin{document}

\title{Lower deviation and moderate deviation probabilities for maximum of a branching random walk }

\author{Xinxin Chen and Hui He}

\maketitle

\noindent\textit{Abstract:}
 Given a super-critical branching random walk on $\mR$ started from the origin,
 let $M_n$ be the maximal position  of individuals at the $n$-th generation.
 Under some mild conditions, it is known from  \cite{A13} that  as
 $n\rar\infty$, $M_n-x^*n+\frac{3}{2\theta^*}\log n$ converges in law for some suitable constants $x^*$ and $\theta^*$. In this work,
 we investigate its moderate deviation, in other words,  the convergence rates of
 $$\P\left(M_n\leq x^*n-\frac{3}{2\theta^*}\log n-\ell_n\right),$$
 for any positive sequence $(\ell_n)$ such that $\ell_n=O(n)$ and $\ell_n\uparrow\infty$. As a by-product, we also obtain lower deviation of $M_n$; i.e., the convergence rate of
 \[
 \P(M_n\leq xn),
 \] for $x<x^*$ in B\"{o}ttcher case where the offspring number is at least two.  Finally, we apply our techniques to study the small ball probability of limit of derivative martingale. 

\bigskip

\noindent\textit{Mathematics Subject Classifications (2010)}:  60J60; 60F10.

\bigskip

\noindent\textit{Key words and phrases}: Branching random walk; maximal position; moderate deviation; lower deviation; Schr\"oder case;  B\"ottcher case; small ball probability; derivative martingale.

\section{Introduction}\label{Sec01}

\subsection{Branching random walk and its maximum}
We consider a discrete-time branching random walk on the real line, which, as a generalized branching process, has been always a very attractive objet in probability theory in recent years. It is closely related to many other random models, for example, random walk in random environment, random fractals and discrete Gaussian free field; see \cite{HS07}, \cite{Liu98}, \cite{Liu06}, \cite{BDZ16a} and \cite{AHS17} references therein. One can refer to \cite{S12} and \cite{Z16} for the recent developments on branching random walk.

Generally, to construct a branching random walk, we take a random point measure as the reproduction law which describes both the number of children and their displacements. Each individual produces independently its children according to the law of this random point measure. In this way, one develops a branching structure with motions.

In this work, we study a relatively simpler model which is constructed as follows. We take a Galton-Watson tree ${\cal T}$, rooted at $\rho$, with offspring distribution given by $\{p_k; k\geq0\}$. For any $u,v\in\mathcal{T}$, we write $u\preceq v$ if $u$ is an ancestor of $v$ or $u=v$. Moreover, to each node $v\in\mathcal{T}\setminus\{\rho\}$, we attach a real-valued random variable $X_v$ to represent its displacement. So the position of $v$ is defined by
\[
S_v:=\sum_{\rho\prec u\preceq v}X_u.
\]
Let $S_\rho:=0$ for convenience. Suppose that given the tree $\mathcal{T}$, $\{X_v; v\in\mathcal{T}\setminus\{\rho\}\}$ are i.i.d. copies of some random variable $X$ (which is called displacement or step size). Note here that the reproduction law is given by $\sum_{|u|=1}\delta_{X_u}$. Thus, $\{S_u; u\in\mathcal{T}\}$ is our branching random walk with independence between offsprings and motions. This independence will be  necessary for our arguments.

For any $n\in\mathbb{N}$, let $M_n$ be the maximal position at the $n$-th generation, in other words,
\[
M_n:=\sup_{|v|=n}S_v,
\]
where $|v|$ denotes the generation of node $v$, i.e., the graph distance between $v$ and $\rho$. The asymptotics of $M_n$ have been studied by many authors, both in the subcritical/critical case and in supercritical case. One can refer to  \cite{LS15}, \cite{NZ17} and \cite{S12} for more details.

We are interested in the supercritical case where $\sum_{k\geq0}kp_k>1$ and the system survives with positive probability. Let $(S_n)$ be a random walk started from $0$ with i.i.d. increments distributed as $X$. Observe that for any individual $|u|=n$ of the $n$-th generation, $S_u$ is distributed as $S_n$. If $\E[|X|]<\infty$, classical law of large number tells us that $S_n\sim \E[X]n$ almost surely. However, as there are too many individuals in this supercritical system, the asymptotical behavior of $M_n$ is not as that of $S_n$.

Conditionally on survival, under some mild conditions, it is known from \cite{Ha74, Ki75, B76} that
\[
\frac{M_n}{n}\rightarrow x^*>\E[X],\quad \textrm{ a.s.,}
\]
where $x^*$ is a constant depending on both offspring law and displacement. Later, the logarithmic order of $M_n-x^*n$ is given by \cite{AR09}, \cite{HS09} in different ways. A\"id\'ekon in \cite{A13} showed that $M_n-x^*n+\frac{3}{2\theta^*}\log n$ converges in law for some suitable $\theta^*\in\mbb R^*_+$, which is an analogue of Bramson's result for branching Brownian motion in \cite{Br78}; see also \cite{BDZ16b}. More details on these results will be given in Section 2.

 For maximum of branching Brownian motion, Chauvin and Rouault \cite{CR99} first studied the large deviation probability. Recently, Derrida and Shi \cite{DS16, DS17a, DS17b} considered both the large deviation and lower deviation. They established precise estimations. On the other hand, for branching random walk, Hu in \cite{Hu16} studied the moderate deviation for $M_n-x^*n+\frac{3}{2\theta^*}\log n$; i.e.; $\P(M_n\leq x^*n-\frac{3}{2\theta^*}\log n-\ell_n)$ with $\ell_n=o(\log n)$. Later, Gantert and H\"ofelsauer \cite{GH18} and Bhattacharya \cite{Bh18} investigated large deviation probability $\P(M_n\geq xn)$ for $x>x^*$. In the same paper \cite{GH18}, Gantert and H\"ofelsauer also studied the lower deviation probability  $\P(M_n\leq xn)$ for $x<x^*$  mainly in Schr\"oder case when $p_0+p_1>0$. In fact, branching random walk in Schr\"oder case can be viewed as a generalized version of branching Brownian motion. 

Motivated by \cite{Hu16}, \cite{GH18} and \cite{DS17a}, the goal of this article is to study moderate deviation $\P(M_n\leq x^*n-\frac{3}{2\theta^*}\log n-\ell_n)$ with $\ell_n=O(n)$.  As we already mentioned, \cite{Hu16} first considered this problem with $\ell_n=o(\log n)$; see Remarks \ref{Hu01} and \ref{Hu02} below for more details. As a by-product of our main results, in B\"ottcher case when $p_0=p_1=0$, we also obtain the lower deviation of $M_n$, i.e., $\P(M_n\leq xn)$ for $x<x^*$, which completes the work \cite{GH18}.  We shall see that the lower deviation of $M_n$ in B\"ottcher case  turns to be very different from that in Schr\"oder case. In fact, Gantert and H\"ofelsauer \cite{GH18} proved that in Schr\"oder case $\P(M_n\leq xn)$ decays exponentially. On contrast, in  B\"ottcher case, we shall show that $\P(M_n\leq xn)$ may decay double-exponentially or super-exponentially depending on the tail behaviors of step size $X$. We will consider three typical left tail distributions of the step size $X$ and obtain the corresponding decay rates and rate functions.  Finally, we also apply our techniques to study the small ball probability for the limit of derivative martingale. The corresponding problem was also considered in \cite{Hu16} for a class of Mandelbrot's cascades in B\"ottcher case with bounded step size and in Schr\"oder case; see also \cite{Liu99} and \cite{Liu01} for more backgrounds. 
Let us state the theorems in the following subsection.

As usual, $f_n=O(g_n)$ or $f_n=O(1)g_n$ means that $f_n\leq Cg_n$ for all $n\geq1$. $f_n=\Theta(1)g_n$ means that $f_n$ is bounded above and below by  a positive and finite constant multiple of $g_n$  for all $n\geq1$. $f_n=o(g_n)$ or $f_n=o_n(1)g_n$ means $\lim_{n\rar\infty}\frac{f_n}{g_n}=0$.

\subsection{Main results}
Suppose that we are in the supercritical case where the tree $\cal T$ survives with positive probability. Formally, we assume that for the offspring law $\{p_k\}_{k\geq0}$:
\begin{equation}\label{momoffspring}
m:=\sum_{k\geq0}kp_k>1\textrm{ and }  \sum_{k\geq0} k^{1+\xi}p_k<\infty, \textrm{ for some }\xi>0.
\end{equation}
At the same time, suppose that for the step size $X$,
\begin{equation}\label{expmom}
\E[X]=0,\textrm{ and }\psi(t):=\E[e^{tX}]<\infty,\quad \forall t\in(-K, K),
\end{equation}
for some $K\in(0,\infty]$.
 We define the rate function of large deviation for the corresponding random walk $\{S_n\}$ with i.i.d. step sizes $X$ by
\begin{equation*}
I(x):=\sup_{t\in\mbb R}\{tx-\log \psi(t)\},\quad \forall x\in\mbb R.
\end{equation*}
 Then it is known from Theorem 3.1 in \cite{B10} that under \reff{momoffspring} and \reff{expmom},
 \[
 \frac{M_n}{n}\rightarrow x^*, \quad \P-a.s.,
 \]
 where $x^*=\sup\{x\geq0: I(x)\leq \log m\}\in (0,\infty)$. Note that if $x^*<\text{ess sup}X\in (0,\infty]$, then $I(x^*)=\log m$ since $I$ is continuous in $(0, \text{ess sup}X)$, and
\begin{equation}\label{hyp3}
\exists\ \theta^*\in(0,\infty) \textrm{ such that }I(x^*)=\theta^*x^*-\log\psi(\theta^*)=\log m.
 \end{equation}
 According to Theorem 4.1 in \cite{B10}, it further follows from \reff{hyp3} that $\P$-a.s.,
 \begin{equation*}\label{roughcvg}
 M_n-nx\rightarrow-\infty,
 \end{equation*}
which  fails if  $x^*=\text{ess sup}X\in(0,\infty)$ and $m\P(X=x^*)>1$.
  Besides \reff{momoffspring}, \reff{expmom} and \reff{hyp3}, if we further suppose that
  \begin{equation}\label{hyp4}
  \psi(t)<\infty,\quad \forall t\in(-K, \theta^*+\delta)\textrm{ for some }\delta>0.
  \end{equation}
  then  it is shown in \cite{AR09} and \cite{HS09}  that $M_n=m_n+o_\P(\log n)$ where
  \begin{equation}
  m_n:=x^*n-\frac{3}{2\theta^*}\log n,\quad \forall n\geq1.
  \end{equation}
  Define the so-called derivative martingale by
\[
D_n:=\sum_{|u|=n}\theta^*(nx^*-S_u)e^{\theta^*(S_u-nx^*)},\quad n\geq 1.
\]
It is known from \cite{BK04} and \cite{A13} that under assumptions \reff{momoffspring}, \reff{expmom}, \reff{hyp3} and \reff{hyp4},  there exists a non-negative random variable $D_{\infty}$ such that
\[
D_{n}\overset{\P-a.s.}{\longrightarrow}D_{\infty},\quad \textrm{as }n\rar\infty,
\]
where $\{D_\infty>0\}=\{\cal T=\infty\}$ a.s.
Next, given  \eqref{momoffspring}, \eqref{expmom}, \eqref{hyp3} and \eqref{hyp4}, A\"id\'ekon \cite{A13} proved the convergence in law of $M_n-m_n$ as follows. For any $x\in\mbb R$,
\begin{equation}\label{cvgM}
\lim_{n\rightarrow\infty}\P(M_n\leq m_n+x)=\E[e^{-C e^{-x}D_\infty}],
\end{equation}
where $C>0$ is a constant.
In this work, we are going to study the asymptotic of $\P(M_n\leq m_n-\ell_n)$ for $1\ll\ell_n=O(n)$, as well as that of $\P(0<D_\infty<\varepsilon)$ which is closely related to $\P(M_n\leq m_n-\ell_n)$ by \eqref{cvgM}. Let us introduce the minimal offspring for $\cal T$:
\[
b:=\min\{k\geq0: p_k>0\}.
\]
We first present the main results in B\"ottcher case where $b\geq2$ and $\cal T=\infty$.
\begin{theorem}[B\"ottcher case, bounded step size]
\label{BDLD} Assume \eqref{momoffspring}, \eqref{expmom} and $b\geq 2$. Suppose that $\text{ess inf }X=-L$ for some $0<L<\infty$,
 then for $x\in (-L, x^*)$,
\begin{equation}\label{LDBd}
\lim_{n\rightarrow\infty}\frac{1}{n}\log\left[-\log\P\left(M_n\leq xn\right) \right] =\frac{x^*-x}{x^*+L} \log b.
\end{equation}
If $\P(X=-L)>0$, then (\ref{LDBd}) holds also for  $x=-L.$
\end{theorem}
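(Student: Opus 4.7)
\emph{Heuristic.} On $\{M_n \le xn\}$ every one of the at least $b^n$ generation-$n$ individuals must lie below $xn$, while each displacement is $\ge -L$ and the ``typical'' maximum of a length-$m$ BRW is $x^* m + O(\log m)$ by A\"id\'ekon's result \eqref{cvgM}. Setting $k^* := (x^*-x)n/(x^*+L)$, one checks $-L k^* + x^*(n-k^*) = xn$: the cheapest way to force $M_n \le xn$ is to drag the walk down at rate $-L$ for the first $k^*$ generations and then let the remaining $n-k^*$ generations evolve typically. Since this first phase constrains $\Theta(b^{k^*})$ independent edges, its probability is of order $\exp(-C b^{k^*})$, which explains both the double-exponential decay and the rate prefactor $(x^*-x)/(x^*+L)$.

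\emph{Lower bound on $\P(M_n\le xn)$ (the ``$\le$'' direction in \eqref{LDBd}).} Fix $\varepsilon_0 \in (0,L)$ with $p(\varepsilon_0) := \P(X \le -L+\varepsilon_0)>0$ and set $k_n := \lceil (x^*-x)n/(x^*+L-\varepsilon_0)\rceil$. Let $A_n$ be the event that every node of generation $<k_n$ has exactly $b$ children and every associated displacement lies in $[-L,-L+\varepsilon_0]$. A direct count of vertices and edges yields $\P(A_n) \ge \exp\bigl(-C(\varepsilon_0) b^{k_n}(1+o(1))\bigr)$. On $A_n$ every $u$ with $|u|=k_n$ satisfies $S_u \le -(L-\varepsilon_0)k_n$, and the choice of $k_n$ forces $xn - S_u \ge x^*(n-k_n)$. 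The $b^{k_n}$ subtrees rooted at level $k_n$ are i.i.d.\ BRWs of length $n-k_n$; \eqref{cvgM} together with $\E[D_\infty]<\infty$ gives $c_n := \P(M_{n-k_n}\le x^*(n-k_n)) = 1 - O((n-k_n)^{-3/(2\theta^*)})$, so the conditional probability that all $b^{k_n}$ subtrees stay below $x^*(n-k_n)$ is $c_n^{b^{k_n}} = \exp(-o(b^{k_n}))$. Combining, $-\log\P(M_n\le xn) \le C'(\varepsilon_0) b^{k_n}$, hence $\frac{1}{n}\log[-\log\P(M_n\le xn)] \le (k_n/n)\log b + o(1)$; letting $n\to\infty$ and $\varepsilon_0\to 0$ closes this direction.

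\emph{Upper bound on $\P(M_n \le xn)$ (the ``$\ge$'' direction in \eqref{LDBd}).} Fix a small $\delta>0$ and set $k'_n := \lfloor (x^*-x)n/(x^*+L) - \delta n\rfloor$. By the branching property,
\begin{equation*}
\P\bigl(M_n \le xn \,\big|\, \F_{k'_n}\bigr) = \prod_{|u|=k'_n} \P\bigl(M_{n-k'_n} \le xn - S_u\bigr).
\end{equation*}
Since $S_u \ge -Lk'_n$, each factor is at most $\P(M_{n-k'_n} \le xn+Lk'_n)$, and a short algebraic check gives $xn + Lk'_n = x^*(n-k'_n) - (x^*+L)\delta n$. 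Because $D_\infty>0$ a.s.\ in the B\"ottcher case, \eqref{cvgM} yields $\lim_{y\to\infty}\limsup_{m\to\infty}\P(M_m \le m_m - y) = 0$; picking $y_0$ with this limit below $1/2$ and using $(x^*+L)\delta n \to\infty$, we obtain $\P(M_{n-k'_n}\le xn+Lk'_n) \le \alpha<1$ for all large $n$. Since $N_{k'_n}\ge b^{k'_n}$ a.s.\ in the B\"ottcher case, $\P(M_n\le xn) \le \alpha^{b^{k'_n}}$, so $\frac{1}{n}\log[-\log\P(M_n\le xn)] \ge (k'_n/n)\log b + o(1)$; sending $n\to\infty$ and then $\delta\to 0$ completes this direction. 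The extension to $x=-L$ under $\P(X=-L)>0$ follows by the same scheme with $\varepsilon_0=0$ and $k_n=n$, which immediately forces $M_n = -Ln$ on $A_n$.

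\emph{Main obstacle.} The delicate step is controlling the $b^{k_n}$-fold product in the lower bound: each subtree satisfies $M_{n-k_n} \le x^*(n-k_n)$ only with probability $c_n\to 1$, and naively the product $c_n^{b^{k_n}}$ could crush the estimate. It is rescued by the polynomial decay $1-c_n = O(n^{-3/(2\theta^*)})$ extracted from the limiting law in \eqref{cvgM}, which makes $b^{k_n}(1-c_n)$ negligible next to the first-phase cost $C(\varepsilon_0)b^{k_n}$. The other mildly subtle point, applying \eqref{cvgM} at a moving argument in the upper bound, is circumvented by monotonicity and fixing $y_0$.
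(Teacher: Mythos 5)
Your two-phase decomposition (drag the system down at rate $\approx -L$ on a $b$-regular tree for $\Theta(n)$ generations, then let the $b^{k_n}$ subtrees behave typically) is exactly the skeleton of the paper's argument, and your algebra identifying the critical depth $(x^*-x)n/(x^*+L)$ is correct. But there is a genuine gap in how you control the subtree probabilities: both directions of your proof invoke A\"id\'ekon's convergence in law \eqref{cvgM}, which holds only under the additional hypotheses \eqref{hyp3} and \eqref{hyp4}. Theorem \ref{BDLD} assumes only \eqref{momoffspring} and \eqref{expmom}, and the remark immediately following it stresses that these do \emph{not} give the second-order behaviour of $M_n$; indeed \eqref{cvgM} can fail here (e.g.\ when $x^*=\mathrm{ess\,sup}\,X=R$ and $m\P(X=R)>1$, in which case $M_n-x^*n$ does not even tend to $-\infty$). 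So the quantities you extract from \eqref{cvgM} — that $c_n:=\P(M_{n-k_n}\le x^*(n-k_n))\to 1$ polynomially fast, and that $\limsup_m\P(M_m\le m_m-y)\to 0$ as $y\to\infty$ — are not available. A secondary error: $\E[D_\infty]<\infty$ is false; by \eqref{Dtail} the derivative-martingale limit satisfies $\P(D_\infty>x)\sim C_D/x$, so it has infinite mean. (This particular claim is not actually needed for your lower bound, since $c_n\ge c>0$ would suffice there, but it signals that the tail estimate you quote is not the right tool.)

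The paper repairs exactly this step by splitting into two cases that avoid \eqref{cvgM} altogether. When $x^*=R$, the subtree event is handled trivially for the lower bound ($\P(M_m\le Rm)=1$), and for the upper bound one shows $\P(M_m\ge Rm)\ge c/m$ via the embedded Galton--Watson process of particles making the maximal jump $R$. When $x^*<R$, one aims the subtrees at $(x^*+a)(n-t_n)$ for a small $a>0$ rather than at $x^*(n-t_n)$: the first-moment bound $\P(M_m>(x^*+a)m)\le e^{-(I(x^*+a)-\log m)m}$ gives the lower bound, and the Gantert--H\"ofelsauer large-deviation asymptotic \eqref{GH18} gives the matching upper bound; one then lets $a\downarrow 0$ using continuity of $I$ at $x^*$. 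To complete your proof you would need to substitute these (or equivalent) first-moment and large-deviation inputs for every appeal to \eqref{cvgM}; as written, the argument only establishes the theorem under the stronger hypotheses \eqref{hyp3}--\eqref{hyp4}, i.e.\ essentially the setting of Theorem \ref{BDMD} rather than Theorem \ref{BDLD}.
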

\begin{remark}
Note that the assumptions \eqref{momoffspring} and \eqref{expmom} do not imply the second logarithmic order of $M_n$.
\end{remark}
\begin{theorem}[Bounded step size: moderate deviation]\label{BDMD}
Assume \eqref{momoffspring}, \eqref{expmom}, \eqref{hyp3}, \eqref{hyp4} and $b\geq2$. Suppose that $\text{ess inf }X=-L$ for some $0<L<\infty$. Then  for any positive increasing sequence $\ell_n$ such that $\ell_n\uparrow\infty$ and $\limsup_{n\rar\infty}\frac{\ell_n}{n}<x^*+L$,
\begin{equation}\label{MDbd}
\P\left(M_n\leq x^*n-\frac{3}{2\theta^*}\log n-\ell_n\right)=e^{-e^{\ell_n\beta (1+o_n(1))}}.
\end{equation}
where $\beta:=\frac{\log b}{x^*+L}\in(0,\theta^*)$ because of \eqref{hyp3}.
\end{theorem}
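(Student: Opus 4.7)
The plan is to localize the \emph{bottleneck} at an intermediate generation $k=k_n$ chosen so that the lower envelope $-Lk$ of the positions at generation $k$, combined with the typical maximum $m_{n-k}$ of the remaining $n-k$ generations, matches the threshold $m_n-\ell_n$. The correct scale is $k\approx \ell_n/(x^*+L)$, which makes $b^k\approx e^{\beta \ell_n}$ and renders each of the $\ge b^k$ subtree-maximum events \emph{of order one} under \eqref{cvgM}. Combined with the branching property (the subtrees rooted at generation $k$ being conditionally independent), this will yield the claimed double exponential decay.

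For the upper bound I would take $k=\lfloor \ell_n/(x^*+L)\rfloor$ and condition on $\F_k$, the $\sigma$-algebra of the first $k$ generations. By the branching property,
\[
\P\bigl(M_n\le m_n-\ell_n\bigm|\F_k\bigr)=\prod_{|u|=k}\P\bigl(M_{n-k}\le m_n-\ell_n-S_u\bigr).
\]
Since $S_u\ge -Lk$ almost surely and $m_n-\ell_n+Lk-m_{n-k}=O(1)$ by the choice of $k$, each factor is at most $\P(M_{n-k}\le m_{n-k}+c_0)$ for a fixed constant $c_0$. The hypothesis $\limsup \ell_n/n<x^*+L$ forces $n-k\to\infty$, so \eqref{cvgM} gives a uniform bound $q<1$ on this probability. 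Combining with the B\"ottcher assumption $|Z_k|\ge b^k$ a.s., one gets $\P(M_n\le m_n-\ell_n)\le q^{b^k}$, whence $\log(-\log \P)\ge \beta \ell_n (1+o_n(1))$.

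For the lower bound, I would fix a small $\delta>0$ and a large $M>0$, and set $k=\lceil (\ell_n+M)/(x^*+L-\delta)\rceil$. Restrict to the event $E_k$ that the Galton--Watson tree is a full $b$-ary tree up to depth $k$ and all displacements in the first $k$ generations lie in $[-L,-L+\delta]$. Since $p_b>0$ and $q_\delta:=\P(X\in[-L,-L+\delta])>0$ (using $\text{ess inf }X=-L$), an elementary estimate gives $\P(E_k)\ge (p_b q_\delta^b)^{(b^k-1)/(b-1)}\ge \exp(-C_\delta b^k)$. On $E_k$, every particle at generation $k$ satisfies $S_u\in[-Lk,-Lk+\delta k]$, so by the choice of $k$ we have $m_n-\ell_n-S_u\ge m_{n-k}+M'$ with $M'=M'(M,\delta)\to\infty$ as $M\to\infty$. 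Conditional independence of the $b^k$ subtrees together with \eqref{cvgM} then yield, for $M'$ and $n$ large, a lower bound $(1/2)^{b^k}$ for the conditional probability that every subtree maximum stays below this threshold. Hence
\[
\P(M_n\le m_n-\ell_n)\ge \exp\bigl(-(C_\delta+\log 2) b^k\bigr)\ge \exp\bigl(-C_\delta'\,e^{\beta_\delta \ell_n}\bigr),
\]
with $\beta_\delta=\log b/(x^*+L-\delta)$. Taking $n\to\infty$ first and then $\delta\downarrow 0$ (so that $\beta_\delta\downarrow \beta$) gives $\log(-\log \P)\le \beta\ell_n(1+o_n(1))$.

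The main technical point will be applying \eqref{cvgM} uniformly while $n-k\to\infty$; the assumption $\limsup \ell_n/n<x^*+L$ is precisely what is needed to keep $k/n$ bounded away from $1$ and to make $\log\frac{n}{n-k}=O(1)$, so that the logarithmic corrections in $m_n-m_{n-k}$ are absorbed into $O(1)$ and do not spoil the matching of exponents. The other delicate point, in the lower bound, is aligning the two competing costs $\exp(-C_\delta b^k)$ (forcing the tree and the displacements) and $(1/2)^{b^k}$ (subtree failure): both are of order $b^k$, so their combined contribution remains at the scale $b^k=e^{\beta_\delta\ell_n}$, and the residual gap $\beta_\delta-\beta$ is closed by the standard double limit in $n$ and $\delta$.
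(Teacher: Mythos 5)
Your proposal is correct and follows essentially the same route as the paper: the same intermediate generation $k\asymp \ell_n/(x^*+L)$, the same branching-property factorization with the a.s.\ bounds $Z_k\ge b^k$ and $S_u\ge -Lk$ for the upper bound, and the same forcing of a $b$-regular tree with near-minimal displacements (cost $e^{-\Theta(b^k)}$) plus an order-one bound from \eqref{cvgM} on each of the $b^k$ subtree events for the lower bound, followed by the double limit in $n$ and $\delta$. The only (cosmetic) differences are your uniform constant $q<1$ in place of the paper's fixed median-type level $y^*$ from \eqref{choiceofy}, and your window $[-L,-L+\delta]$ in place of the paper's half-line $\{X\le -L+\eta\}$.
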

\begin{remark}\label{Hu01}
Hu \cite{Hu16} obtained this moderate deviation \eqref{MDbd} for  $\ell_n=o(\log n)$ in a more general setting with bounded step size  and without assuming independence between offsprings and motions. One could check that $\beta=\sup\{a>0:\P(\sum_{|u|=1}e^{-a(x^*-X_u)}\geq1)=1\}=\frac{\log b}{x^*+L}$ is coherent with that defined in (1.10) of \cite{Hu16}.
\end{remark}
\begin{remark}\label{BDMDS}
Suppose that all assumptions in Theorem \ref{BDMD} hold. Then by Theorem 1.3 in \cite{Hu16}, we have
\[
\P(D_\infty<\varepsilon)=e^{-\varepsilon^{-\frac{\beta}{\theta^*-\beta}+o(1)}}.
\]
\end{remark}
\begin{theorem}[B\"ottcher case, Weibull left tail]
\label{weibulltail}
Assume \eqref{momoffspring}, \eqref{expmom}, \eqref{hyp3}, \eqref{hyp4} and $b\geq2$. Suppose $\P(X\leq -z)=\Theta(1) e^{-\lambda z^{\alpha}}$ as $z\rightarrow+\infty$ for some constant $\alpha\geq1$ and $\lambda>0$. Then
 for any positive increasing sequence $\ell_n$ such that $\ell_n\uparrow\infty$ and $\ell_n=O(n)$,
\begin{equation}\label{MDWei}
\lim_{n\rightarrow\infty}\frac{1}{\ell_n^{\alpha}}\log\P\left(M_n\leq m_n- \ell_n\right) =  -\lambda\l(b^{\frac{1}{\alpha-1}}-1\r)^{\alpha-1}.
\end{equation}
where for convenience, $\l(b^{\frac{1}{\alpha-1}}-1\r)^{\alpha-1}:=b$ for $\alpha=1$. In particular, for any $x<x^*$,
\begin{equation}\label{LDWei}
\lim_{n\rightarrow\infty}\frac{1}{n^{\alpha}}\log\P\left(M_n\leq xn\right) = -\lambda\l(b^{\frac{1}{\alpha-1}}-1\r)^{\alpha-1}
  (x^*-x)^{\alpha}.
\end{equation}
\end{theorem}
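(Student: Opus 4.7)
Both bounds in \eqref{MDWei} reduce to the same Lagrangian optimization. To force $M_n$ below $m_n-\ell_n$ it is natural to spread a downward shift of size $\ell_n$ across the first $k$ generations by demanding that every displacement $X_u$ at generation $j$ satisfy $X_u\leq -z_j$. In the B\"ottcher case there are at least $b^j$ individuals at generation $j$, and the Weibull hypothesis gives $\log\P(X\leq -z_j)\sim -\lambda z_j^\alpha$, so the log-cost of this constraint is $\asymp -\lambda\sum_j b^j z_j^\alpha$, to be minimized under the budget $\sum_j z_j\gtrsim \ell_n$ (corrections coming from $x^*k$ and from the typical subtree maximum at level $k$ are negligible whenever $k$ is much smaller than $n$). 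A Lagrange multiplier computation yields the geometrically decreasing profile $z_j=c\,b^{-j/(\alpha-1)}$ with $c=(b^{1/(\alpha-1)}-1)\ell_n$, whose total cost, after summing the geometric series, is exactly $\lambda(b^{1/(\alpha-1)}-1)^{\alpha-1}\ell_n^\alpha$, the constant in \eqref{MDWei}. For $\alpha=1$, $b^j z_j$ is minimized by concentrating the entire budget at $j=1$, giving cost $\lambda b\ell_n$ and matching the stated convention.

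For the lower bound I would fix an integer $k$ (a large constant, sent to infinity after $n$) and consider the event
\[
E_n := \bigl\{\text{every } v \text{ with } |v|<k \text{ has exactly } b \text{ children}\bigr\}\cap\bigl\{X_u\leq -z_j:\ |u|=j,\ 1\leq j\leq k\bigr\},
\]
with $z_j$ realizing the above optimum and $\sum_j z_j$ chosen slightly larger than $\ell_n+x^*k$. The independence between offspring and displacements and the i.i.d.\ assumption give $\P(E_n)\geq p_b^{(b^k-1)/(b-1)}\prod_j\P(X\leq -z_j)^{b^j}$, which is at least $\exp\{-\lambda(b^{1/(\alpha-1)}-1)^{\alpha-1}\ell_n^\alpha(1+o_k(1))\}$ modulo a $k$-dependent overhead harmless under the double limit. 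On $E_n$ every $u$ with $|u|=k$ has $S_u\leq -\sum_j z_j$, and \eqref{roughcvg} applied to each of the $b^k$ subtrees together with a union bound gives $\max_{|u|=k}M^{(u)}_{n-k}\leq x^*(n-k)+o(n-k)$ with probability tending to $1$; sending first $n\to\infty$ and then $k\to\infty$ closes the lower bound in \eqref{MDWei}.

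For the upper bound I would condition on $\mathcal{F}_k$, the $\sigma$-field generated by the tree and displacements up to generation $k$. By the branching structure,
\[
\P(M_n\leq m_n-\ell_n\mid \mathcal{F}_k) = \prod_{|u|=k}\P\bigl(M^{(u)}_{n-k}\leq m_n-\ell_n-S_u\,\bigm|\,S_u\bigr),
\]
and the limit law \eqref{cvgM}, applied uniformly in its argument as $y\to-\infty$, makes each factor behave like $F(x^*k-\ell_n-S_u)$ with $F(y)=\E[e^{-Ce^{-y}D_\infty}]$. Inserting the Weibull left tail of $S_u=\sum_{j\leq k}X_{u_j}$ through a many-to-one/union estimate over the genealogical tree reduces matters to the same Lagrangian optimization, now as a lower bound on the cost, yielding the matching rate $\lambda(b^{1/(\alpha-1)}-1)^{\alpha-1}\ell_n^\alpha(1-o(1))$. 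The statement \eqref{LDWei} follows by specializing $\ell_n=(x^*-x)n$ and absorbing the logarithmic $m_n$-correction into the $o(n^\alpha)$ remainder.

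The main obstacle is the upper bound: it requires sharp asymptotics of $F(y)$ as $y\to-\infty$, equivalently a quantitative small-ball estimate for $D_\infty$ in the B\"ottcher case, developed in parallel in the paper. Uniform control of $(S_u)_{|u|=k}$ is also delicate, since summing tail probabilities over the tree must not lose constants; and the $\Theta(1)$ prefactor in the Weibull hypothesis must be handled non-asymptotically for the geometric-series cancellation producing the explicit constant to survive the $k\to\infty$ limit.
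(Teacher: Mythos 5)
Your lower bound is essentially the paper's own argument: the geometric profile $z_j=(b^{\frac{1}{\alpha-1}}-1)b^{-j/(\alpha-1)}\ell_n$ imposed on a forced $b$-regular initial tree is exactly the choice of $a_k$ in Section 3.1.1, and the bookkeeping with the tightness threshold $y^*$ for $M_m-m_m$ goes through (the paper lets the depth grow like $(\alpha-1)\log\ell_n/\log b$ rather than taking a fixed $k$ followed by $k\to\infty$, but either works at the $\ell_n^\alpha$ scale).

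The upper bound, however, has a genuine gap, for two reasons. First, circularity: you control each factor $\P(M_{n-k}\leq m_n-\ell_n-S_u)$ via the limit law \eqref{cvgM} together with ``sharp asymptotics of $F(y)$ as $y\to-\infty$'', i.e.\ the small-ball behaviour of $D_\infty$. But \eqref{cvgM} is not uniform in the regime where its argument tends to $-\infty$ with $n$ --- quantifying it there is precisely the theorem being proved --- and the small-ball estimate (Proposition \ref{weibulltailleft}) is itself derived in Section \ref{lefttailD} from the present tree argument, so it cannot serve as input. Second, and more structurally, conditioning at a \emph{fixed} generation $k$ cannot localize the cost: the system need not pay the $\ell_n$ shift in the first $k$ generations at all; it can leave $(S_u)_{|u|=k}$ typical and let each of the $\geq b^k$ subtrees independently realize a low maximum, and excluding this with fixed $k$ requires exactly the estimate being proved at time $n-k$. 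The paper escapes this by taking the conditioning time $t_n^+\asymp\log\ell_n$ growing, so that the a priori bound of Lemma \ref{rupbd1} makes the ``deferred'' scenario cost $\varepsilon\ell_n b^{t_n^+-\delta_n}\gg\ell_n^\alpha$; with fixed $k$ one only gets $O(\ell_n b^k)=o(\ell_n^\alpha)$ for $\alpha>1$. Finally, the sentence ``a many-to-one/union estimate reduces matters to the same Lagrangian optimization, now as a lower bound on the cost'' asserts the hard step rather than supplying it: a first-moment bound controls the probability that \emph{some} particle is low, not that essentially \emph{all} of them are. The actual argument needs (i) the combinatorial surgery producing the pruned $b$-ary tree $\mathbf{t}_b^{\backslash u^*}$ all of whose generation-$t_n^+$ leaves are low, to cope with the fact that only all but $b^{t_n^+-\delta_n}$ of the particles are constrained and the tree is not regular; (ii) the convexity/weighted-mean inequality \eqref{upbdSigma1} giving the sharp bound $\lambda(b^{\frac{1}{\alpha-1}}-1)^{\alpha-1}(1-2\varepsilon)^\alpha\ell_n^\alpha$ from below on the cost under the constraint; and (iii) the verification that the entropy of displacement configurations, of order $(CA\ell_n)^{b^{t_n^++1}}$, is negligible at scale $e^{-\ell_n^\alpha}$. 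None of these appears in your sketch, and the case $\alpha=1$ (where the optimizer degenerates onto a single generation and the paper argues differently) is also not addressed for the upper bound.
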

\begin{remark}
Note that if $\alpha<1$, the assumption \eqref{expmom} can not be satisfied and we are in another regime where $M_n$ grows faster than linear in time; see \cite{G00}.
\end{remark}
The weak convergence \eqref{cvgM} shows that $\P(M_n\leq m_n-\ell_n)$ and $\P(D<\ez)$ are closely related. So inspired by Theorem \ref{weibulltail}, one obtains the following result.
\begin{proposition}[B\"ottcher case, Weibull left tail]\label{weibulltailleft}
Suppose that all assumptions in Theorem \ref{weibulltail} hold. Then
\begin{eqnarray}\label{Dweib}
\lim_{\varepsilon\rar0+}\frac{1}{(-\log\varepsilon)^{\alpha} }\log\P(D_{\infty}<\varepsilon)=-\frac{ \lz }{\l(\theta^*\r)^{\alpha} } \l(b^{\frac{1}{\alpha-1}}-1\r)^{\alpha-1}.
\end{eqnarray}
\end{proposition}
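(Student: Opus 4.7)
The plan is to derive Proposition \ref{weibulltailleft} from Theorem \ref{weibulltail} via the Laplace transform of $D_\infty$ and a Tauberian sandwich. The bridge is the weak convergence \eqref{cvgM}: the event $\{M_n\le m_n-\ell\}$ corresponds in the limit $n\to\infty$ to a functional $\E[e^{-Ce^{\theta^*\ell}D_\infty}]$ of $D_\infty$, so a shift $\ell$ on the $M_n$-side translates to a scale $\varepsilon\asymp e^{-\theta^*\ell}$ on the $D_\infty$-side. Substituting this correspondence into the rate $\lambda\kappa\ell^{\alpha}$ of Theorem \ref{weibulltail}, with $\kappa:=\bigl(b^{1/(\alpha-1)}-1\bigr)^{\alpha-1}$, produces exactly the claimed rate $\lambda\kappa(-\log\varepsilon)^{\alpha}/(\theta^*)^{\alpha}$.

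Set $L(t):=\E[e^{-tD_\infty}]$. First, I would establish the Laplace-transform asymptotics
\begin{equation*}
-\log L(t)=\tfrac{\lambda\kappa}{(\theta^*)^{\alpha}}(\log t)^{\alpha}(1+o(1)),\qquad t\to\infty.
\end{equation*}
By \eqref{cvgM} applied at $x=-\ell$, one has $L(Ce^{\theta^*\ell})=\lim_{n\to\infty}\P(M_n\le m_n-\ell)$ for each fixed $\ell$, and a change of variable $\ell=(\log(t/C))/\theta^*$ is intended to let us read off the asymptotic rate from Theorem \ref{weibulltail}. Having the Laplace asymptotics, I would then invert via a Tauberian sandwich. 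For the upper bound on $\P(D_\infty<\varepsilon)$, the exponential Chebyshev inequality $\P(D_\infty<\varepsilon)\le e^{t\varepsilon}L(t)$ at $t=1/\varepsilon$ gives
\begin{equation*}
\P(D_\infty<\varepsilon)\le e\cdot L(1/\varepsilon)=\exp\!\Bigl(-\tfrac{\lambda\kappa}{(\theta^*)^{\alpha}}(-\log\varepsilon)^{\alpha}(1+o(1))\Bigr).
\end{equation*}
For the matching lower bound, the trivial decomposition $L(t)\le\P(D_\infty\le\varepsilon)+e^{-t\varepsilon}$ combined with the choice $t=K(-\log\varepsilon)^{\alpha}/\varepsilon$ for $K$ larger than $\lambda\kappa/(\theta^*)^{\alpha}$ makes $e^{-t\varepsilon}=o(L(t))$, so $\P(D_\infty\le\varepsilon)\ge L(t)(1-o(1))$ with the right rate.

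The main obstacle is the Laplace-transform step, because \eqref{cvgM} only gives $L(Ce^{\theta^*\ell})=\lim_{n}\P(M_n\le m_n-\ell)$ for each fixed $\ell$, whereas Theorem \ref{weibulltail} describes the decay rate along sequences $\ell_n\uparrow\infty$. I would handle the upper bound by extracting from the proof of Theorem \ref{weibulltail} a uniform-in-$n$ estimate of the form $\P(M_n\le m_n-\ell)\le\exp(-\lambda\kappa\ell^{\alpha}(1-\delta))$, valid for all large $\ell$ and all $n$, and then letting $n\to\infty$ at fixed $\ell$ to transfer the bound to $L$. For the matching lower bound on $L(t)$, I would in parallel run the construction/second-moment argument underlying the lower bound in Theorem \ref{weibulltail}, but applied directly to the smoothing-type recursion $D_\infty\overset{d}{=}\sum_{|v|=1}e^{\theta^*(S_v-x^*)}D_\infty^{(v)}$: the base step forces all first-generation displacements to be exceptionally negative (an event of probability $\sim e^{-\lambda bM^{\alpha}}$ by the Weibull tail), which via the recursion makes $D_\infty$ small, and iterating through $O(1)$ generations sharpens the prefactor $b$ to the optimal value $\kappa=(b^{1/(\alpha-1)}-1)^{\alpha-1}$.
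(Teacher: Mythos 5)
Your proposal takes a genuinely different route from the paper. The paper does not pass through the Laplace transform of $D_\infty$ at all: it works directly with the a.s.\ recursion $D_\infty=\sum_{|u|=n}e^{\theta^*(S_u-nx^*)}D_\infty^{(u)}$, re-runs the same tree-surgery/convexity machinery as in the proof of Theorem~\ref{weibulltail} but now on the weights $e^{\theta^*(S_u-nx^*)}$, and closes the estimates using the known tail $\P(D_\infty>x)\sim C_D/x$ and a weak law for $\sum_{|u|=n}D_\infty^{(u)}$. Your Laplace/Tauberian route is an attractive way to try to use Theorem~\ref{weibulltail} as a black box, and the Chebyshev step $\P(D_\infty<\varepsilon)\le e\,L(1/\varepsilon)$ together with the substitution $t=Ce^{\theta^*\ell}$ correctly reproduces the constant $\lambda(\theta^*)^{-\alpha}(b^{1/(\alpha-1)}-1)^{\alpha-1}$, so the upper bound via this route is sound \emph{modulo} the point you flag yourself: Theorem~\ref{weibulltail} is a statement along sequences $\ell_n\uparrow\infty$ with $\ell_n=O(n)$, whereas you need a bound with $\ell$ fixed and $n\to\infty$, so you must extract uniform-in-$n$ estimates from its proof rather than cite the theorem. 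That is doable (the bounds in the proof depend on $n$ only through requiring $n$ large relative to $\ell$), but it does mean the Laplace route is not really a black-box shortcut.

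Two caveats on the lower bound. First, it is somewhat circular as sketched: to lower-bound $L(t)$ via the smoothing recursion you essentially show $\P(D_\infty<\delta)$ is at least some explicit quantity, and that is exactly the statement you are trying to prove; the Tauberian inversion then just reshuffles parameters. The paper's direct lower bound on $\P(D_\infty<\varepsilon)$ (conditioning on $Z_{t_\varepsilon}=b^{t_\varepsilon}$, forcing the displacements $X_u\le -a_k$ with $a_k=\frac{(b_\alpha-1)(1+2\delta)(-\log\varepsilon)}{\theta^*b_\alpha^k}$, and then using $\sum_k D_k\asymp b^{t_\varepsilon}\log(b^{t_\varepsilon})\ll\varepsilon^{-\delta}$) is shorter and self-contained. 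Second, your remark that ``iterating through $O(1)$ generations sharpens the prefactor $b$ to the optimal $\kappa$'' is inaccurate for $\alpha>1$: the optimal choice in the paper is $t_\varepsilon=(\alpha-1)\frac{\log((1+\delta)(-\log\varepsilon))}{\log b}\to\infty$, i.e.\ a logarithmically growing number of generations; only for $\alpha=1$ does a single generation suffice (indeed the paper treats $\alpha=1$ separately with $t_\varepsilon=1$). The mechanism that replaces $b$ by $(b^{1/(\alpha-1)}-1)^{\alpha-1}$ is the power-mean/convexity optimization over the geometrically decaying displacement profile $(a_k)_{k\le t_\varepsilon}$, which requires $t_\varepsilon\to\infty$.
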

\begin{theorem}[B\"ottcher case, Gumbel left tail]
\label{gumbeltail} Assume \eqref{momoffspring}, \eqref{expmom}, \eqref{hyp3}, \eqref{hyp4} and $b\geq2$.
Suppose $\P(X\leq -z)=\Theta(1)\exp(-e^{ z^{\alpha}})$ as $z\rightarrow+\infty$ for some constant $\alpha>0$. Then
for any positive increasing sequence $\ell_n$ such that $\ell_n\uparrow\infty$ and $\ell_n=O(n)$,
\begin{equation}\label{MDGum}
\lim_{n\rightarrow\infty}\ell_n^{-\frac{\alpha}{\alpha+1}}\log\l[-\log\P\left(M_n\leq m_n- \ell_n\right)\r] =  \left(\frac{1+\alpha}{\alpha} \log b\right)^{\frac{\alpha}{\alpha+1}}.
\end{equation}
In particular, for any $x<x^*$,
\begin{equation}\label{LDGum}
\lim_{n\rightarrow\infty}{n^{-\frac{\alpha}{\alpha+1}}}\log\left[-\log
\P\left(M_n\leq xn\right)\right]
= \left(\frac{1+\alpha}{\alpha}\log b\right)^{\frac{\alpha}{\alpha+1}}(x^*-x)^{\frac{\alpha}{\alpha+1}} .
\end{equation}
\end{theorem}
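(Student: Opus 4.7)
The plan is to establish matching lower and upper bounds for $\log[-\log\P(M_n\le m_n-\ell_n)]$, both yielding the constant $C_\alpha := \left(\frac{1+\alpha}{\alpha}\log b\right)^{\alpha/(\alpha+1)}$. The overall strategy parallels that of Theorem \ref{weibulltail}, now with the Gumbel cost $\P(X\le-y)=\Theta(1)\,e^{-e^{y^\alpha}}$ replacing the Weibull one. The governing variational problem is
\[\min_{y_j\ge0,\ \sum_j y_j=\ell_n}\ \sum_{j=1}^{J}b^{j}e^{y_j^{\alpha}},\]
whose Lagrange condition $y_j^\alpha+j\log b=C^*$ forces the descent profile $y_j=(C^*-j\log b)_+^{1/\alpha}$ for $j\le J=\lfloor C^*/\log b\rfloor$, with $C^*=\left(\frac{(1+\alpha)\log b}{\alpha}\,\ell_n\right)^{\alpha/(\alpha+1)}$. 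Each generation contributes exactly $e^{C^*}$ to the cost, so the minimum equals $J\,e^{C^*}$ and $\log(\min)=C^*(1+o(1))=C_\alpha\,\ell_n^{\alpha/(\alpha+1)}(1+o(1))$.

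For the lower bound I construct the event $E_n$ on which: (i) every node of the first $J$ generations has exactly $b$ offspring (probabilistic cost $p_b^{(b^J-1)/(b-1)}$, whose $-\log$ is $O(b^J)=O(e^{C^*})$); (ii) each edge at generation $j\in\{1,\dots,J\}$ carries a displacement in $(-y_j-1,-y_j]$ (cost $\Theta(1)\,\exp(-e^{y_j^\alpha})$ per edge, hence $\exp(-e^{C^*})$ for the whole generation by the Lagrange identity); and (iii) from each of the $b^J$ resulting gen-$J$ nodes, which all lie at positions $\le-\ell_n+O(J)$, the subtree satisfies $M^{(u)}_{n-J}\le m_{n-J}+J(\log b)/\theta^*+\kappa$ for some fixed large $\kappa$. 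Event (iii) has probability uniformly bounded below in $n$ by the tightness \eqref{cvgM} combined with the Gumbel-type maximum of $b^J$ independent copies of $M_{n-J}-m_{n-J}$; the inequality $x^*\ge(\log b)/\theta^*$, a direct consequence of $\log m\ge\log b$ together with \eqref{hyp3}, then guarantees $M_n\le m_n-\ell_n$ on $E_n$. Multiplying the three factors gives $\P(E_n)\ge\exp(-e^{C^*(1+o(1))})$, which is the $\ge$ direction of \eqref{MDGum}.

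For the upper bound I plan a two-scale decomposition at generation $J$. Writing $F_n(x):=\P(M_n\le x)$, for any $\delta>0$,
\[F_n(x)\le\P\Bigl(\max_{|u|=J}S_u\le x-m_{n-J}+\delta\Bigr)+\E\Bigl[\#\{|u|=J:S_u>x-m_{n-J}+\delta\}\Bigr]\,F_{n-J}(m_{n-J}-\delta),\]
because on $\{M_n\le x\}$ any gen-$J$ particle above the threshold must spawn a subtree with $M^{(u)}_{n-J}<m_{n-J}-\delta$. Substituting $x=m_n-\ell_n$ turns the first probability into a deep moderate-deviation event for $M_J$, which I bound via a refined many-to-one estimate combining the Gumbel Cram\'er asymptotics $-\log\P(S_J\le-L)\sim J\,e^{(L/J)^\alpha}$ with the fact that the B\"ottcher tree contains $\ge b^J$ correlated gen-$J$ particles (the naive single-spine bound $\P(M_J\le y)\le\P(S_J\le y)$ is by itself too loose). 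The second term is controlled via a quantitative A\"id\'ekon-type estimate of the form $F_{n-J}(m_{n-J}-\delta)\le\exp(-c\,g(\delta))$ uniform in $n$, with $g(\delta)$ diverging fast enough in the B\"ottcher regime; balancing $J$ and $\delta$ reproduces the Lagrange value $C^*$.

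The main obstacle is the upper bound. Specifically, one must (a) sharpen the single-spine tail estimate to exploit the $\ge b^J$ correlated gen-$J$ particles, and (b) obtain a non-asymptotic bound on $F_{n-J}(m_{n-J}-\delta)$ that remains effective when $\delta=\delta(n)\to\infty$, for which the weak limit \eqref{cvgM} alone is not enough; this step will invoke the tightness machinery underlying \cite{A13} together with a self-consistent control of the small-ball behaviour of $D_\infty$. Once \eqref{MDGum} is established, the linear-deviation statement \eqref{LDGum} is immediate by specializing $\ell_n=(x^*-x)n$.
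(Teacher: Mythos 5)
Your lower bound is essentially the paper's argument: a $b$-regular tree up to $J=\Theta(\ell_n^{\alpha/(\alpha+1)})$ generations with the prescribed descent profile $y_j=(C^*-j\log b)_+^{1/\alpha}$, costing $\exp(-\Theta(J e^{C^*}))$, followed by control of the $b^{J}$ subtrees. One caveat: your step (iii) as stated does not quite close. With intervals $(-y_j-1,-y_j]$ the gen-$J$ positions overshoot $-\ell_n$ by only $O(J)$ in the wrong direction, so the subtrees must satisfy $M^u_{n-J}\le m_{n-J}+x^*J-O(J)$, and a union bound with the tail $\P(M_N>m_N+z)\asymp z e^{-\theta^*z}$ gives $b^J\cdot\Theta(J)e^{-J\log b}=\Theta(J)$, which does not vanish; so the probability of (iii) is not ``uniformly bounded below'' by your argument. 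This is harmless for the theorem --- even a per-subtree probability $1/2$ gives $(1/2)^{b^J}=\exp(-\Theta(e^{C^*}))$, which is absorbed at the $\log\log$ level --- and the paper sidesteps it entirely by tuning the profile so that each subtree only needs $M^u_{n-J}\le m_{n-J}+y^*$, an event of probability $\ge 1/2$ by tightness.

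The genuine gap is in the upper bound. Your decomposition bounds the ``some particle above the threshold'' contribution by
$\E[\#\{|u|=J:S_u>\cdot\}]\cdot F_{n-J}(m_{n-J}-\delta)$, i.e.\ by a \emph{single} factor of $F_{n-J}$. This cannot balance: to make that term as small as $\exp(-e^{C_\alpha\ell_n^{\alpha/(\alpha+1)}(1-o(1))})$ you are forced to take $\delta\ge\ell_n(1-o(1))$, at which point the threshold in your first term becomes $\approx x^*J-o(\ell_n)$ and $\P(\max_{|u|=J}S_u\le x^*J-o(\ell_n))=\Theta(1)$, so the first term is not small. The point of the paper's decomposition is to use the threshold $b^{t_n^+-\delta_n}$ (not $1$) for the number of particles above level $-(1-\varepsilon)\ell_n$: if at least that many particles are high, then \emph{all} of their independent subtrees must be low, giving $F_{n-t_n^+}(m_{n-t_n^+}-\varepsilon\ell_n/2)^{b^{t_n^+-\delta_n}}$ --- the huge power, combined with a separately proved rough bound $F_N(m_N-\ell)\le\exp(-e^{\eta_0\ell^{\alpha/(\alpha+1)}})$, makes this term negligible; if fewer than $b^{t_n^+-\delta_n}$ particles are high, a tree-rearrangement argument extracts a pruned $b$-ary subtree all of whose gen-$t_n^+$ descendants have dropped below $-(1-2\varepsilon)\ell_n$, and a convexity/Lagrange computation over that subtree yields the lower bound $\sum_u e^{x_u^\alpha}\ge(1-o(1))\exp\{((\tfrac{\alpha+1}{\alpha})(1-2\varepsilon)\log b)^{\alpha/(\alpha+1)}\ell_n^{\alpha/(\alpha+1)}\}$, which is exactly where the constant comes from. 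You correctly flag that the single-spine bound is too loose and that a non-asymptotic control of $F_{n-J}$ is needed, but the mechanism that exploits the $b^J$ correlated particles --- the combinatorial extraction of a near-regular subtree that must pay the full variational cost --- is the heart of the proof and is absent from your proposal; the first-moment bound $\E[\#]\cdot F_{n-J}$ that you write down in its place would fail to produce the sharp constant.
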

Again, inspired by Theorem \ref{gumbeltail} and the weak convergence \eqref{cvgM}, we have the following result.
\begin{proposition}[B\"ottcher case, Gumbel left tail]
\label{gumbeltailleft} Suppose that all assumptions in Theorem \ref{gumbeltail} hold. Then
\begin{eqnarray}\label{Dgumb}
\lim_{\varepsilon\rar0+}\frac{1}{(-\log\varepsilon)^{\frac{\az}{\az+1}} }\log\l[-\log\P(D_{\infty}<\varepsilon)\r]= \left(\frac{1+\alpha}{\theta^*\alpha} \log b\right)^{\frac{\alpha}{\alpha+1}}.
\end{eqnarray}

\end{proposition}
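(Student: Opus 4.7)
The plan is to reduce Proposition~\ref{gumbeltailleft} to Theorem~\ref{gumbeltail} via the branching recursion for the derivative martingale, namely
$$D_\infty=\sum_{|u|=n}e^{\theta^*(S_u-nx^*)}D_\infty^{(u)},$$
where the $\{D_\infty^{(u)}\}_{|u|=n}$ are i.i.d.\ copies of $D_\infty$, independent of $\mathcal F_n$. The key scaling is $\varepsilon\leftrightarrow e^{-\theta^*\ell}$: pushing $M_n$ a distance $\ell$ below $m_n$ multiplies every exponential weight by $e^{-\theta^*\ell}$, so that substituting $\ell=(-\log\varepsilon)/\theta^*$ into the rate $\bigl(\tfrac{1+\alpha}{\alpha}\log b\bigr)^{\alpha/(\alpha+1)}\ell^{\alpha/(\alpha+1)}$ from \eqref{MDGum} produces exactly the claimed constant $\bigl(\tfrac{1+\alpha}{\theta^*\alpha}\log b\bigr)^{\alpha/(\alpha+1)}$, which accounts for the extra $\theta^*$ appearing in \eqref{Dgumb} relative to \eqref{MDGum}.

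For the lower bound on $\P(D_\infty<\varepsilon)$ I would construct, as in the proof of Theorem~\ref{gumbeltail}, the explicit event on which every vertex of the first $k=k(\varepsilon)$ generations has a sufficiently negative displacement. With $L=-\log\varepsilon$, choosing $k\sim L^{\alpha/(\alpha+1)}$ optimally and the per-edge displacement of order $L/(k\theta^*)$, the event guarantees both $S_u\leq -L/\theta^*+O(1)$ for all $|u|=k$ and that the conditionally i.i.d.\ subtree values $D_\infty^{(u)}$ retain their typical size, so that $D_\infty<\varepsilon$ holds on this event with positive conditional probability. The Gumbel tail of $X$ gives the probability of the event as $\exp\bigl(-\Theta(1)\,b^k e^{(L/(k\theta^*))^\alpha}\bigr)$; minimizing the exponent in $k$ reproduces the rate $\exp\bigl(-\exp(L^{\alpha/(\alpha+1)}K)\bigr)$ with the correct $K$.

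For the upper bound I iterate the recursion: $D_\infty<\varepsilon$ forces $D_\infty^{(u)}<\varepsilon\, e^{\theta^*(nx^*-S_u)}$ for every $|u|=n$, whence, conditionally on $\mathcal F_n$,
$$\P(D_\infty<\varepsilon\mid\mathcal F_n)\leq\prod_{|u|=n}\P\!\bigl(D_\infty<\varepsilon\, e^{\theta^*(nx^*-S_u)}\bigr).$$
Plugging in an inductive ansatz of the form $\P(D_\infty<\eta)\leq\exp(-\exp((-\log\eta)^{\alpha/(\alpha+1)}(K-\delta)))$ and integrating against the law of $(S_u)_{|u|=n}$ (controlled via Theorem~\ref{gumbeltail} together with a many-to-one estimate for the number of particles at each height) closes the bootstrap. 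The main obstacle is verifying that the optimization of $n$ in this bootstrap produces exactly the constant $K$ and not a strictly larger one; this requires the precise Gumbel-tail estimates already employed in proving \eqref{MDGum}, combined with a careful matching of scales between the inductive hypothesis and the right-hand side of the product bound above.
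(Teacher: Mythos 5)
Your overall architecture — the recursion $D_\infty=\sum_{|u|=n}e^{\theta^*(S_u-nx^*)}D_\infty^{(u)}$, the scaling $\varepsilon\leftrightarrow e^{-\theta^*\ell}$ to explain the extra $\theta^*$, a constructive lower bound and a bootstrapped upper bound — is exactly the pattern the paper uses for the Weibull case (Proposition \ref{weibulltailleft}) and says to transfer to the Gumbel case. However, your lower bound contains a genuine quantitative error. With a \emph{uniform} per-edge displacement of order $L/(k\theta^*)$ over $k$ generations, the cost of the event is $\exp\bigl(-\Theta(1)\,b^k e^{(L/(k\theta^*))^\alpha}\bigr)$, and minimizing $k\log b+(L/(k\theta^*))^\alpha$ over $k$ yields the constant $(\alpha+1)\,\alpha^{-\frac{\alpha}{\alpha+1}}(\log b)^{\frac{\alpha}{\alpha+1}}$ in front of $(L/\theta^*)^{\frac{\alpha}{\alpha+1}}$, whereas the target is $(\alpha+1)^{\frac{\alpha}{\alpha+1}}\alpha^{-\frac{\alpha}{\alpha+1}}(\log b)^{\frac{\alpha}{\alpha+1}}$; the ratio is $(\alpha+1)^{\frac{1}{\alpha+1}}>1$ (e.g.\ $\sqrt2$ for $\alpha=1$). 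So your claim that this optimization ``reproduces the rate with the correct $K$'' is false: the uniform profile is strictly suboptimal and only gives $\limsup\le(\alpha+1)^{\frac{1}{\alpha+1}}K$, not $\le K$. You need the generation-dependent profile used in the lower bound of Theorem \ref{gumbeltail}, namely $a_j\propto(k+1-j)^{1/\alpha}$, chosen so that the per-generation cost $b^je^{a_j^\alpha}$ is constant in $j$; the doubly-exponential cost is then governed by $k\log b$ alone with $\sum_j a_j= L/\theta^*$, which is what produces the sharp constant. (Separately, since $\P(D_\infty>x)\sim C_D/x$ has infinite mean, ``the subtree values retain their typical size'' needs the $\varepsilon^{1+\delta}$ slack and the triangular-array weak law, as in the paper's Weibull argument, rather than a naive $O(b^k)$ bound on $\sum_u D_\infty^{(u)}$.)

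On the upper bound, the bootstrap (rough bound first, then the refined product bound over the particles in $U_0$) is the right skeleton, but ``a many-to-one estimate for the number of particles at each height'' cannot close it with the sharp constant. The dominant term is $\P\bigl(\#\{|u|=t:\ S_u\ge -(1-\delta)L/\theta^*\}<b^{t-\delta_t}\bigr)$, i.e.\ the event that essentially \emph{all} of the at least $b^t$ particles are low; this is a joint large-deviation event for the whole displacement array, not for a single spine, and a first-moment bound controls only the expected number of low particles. The paper extracts the matching constant via the tree-rearrangement producing the pruned $b$-ary tree $\mathbf{t}_b^{\backslash u^*}$ and the convexity optimization of $\sum_u e^{x_u^\alpha}$ (the $\Xi_{t_n^+}$ bound in Section 3.3.2); without that ingredient your inductive ansatz will not yield the exact constant $K$, which is precisely the obstacle you yourself flag but do not resolve.
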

Next theorem concerns the Schr\"oder case where $p_0+p_1>0$. Let $q:=\P({\cal T}<\infty)\in [0,1)$ be the extinction probability and $f(s):=\sum_{k\geq0}p_ks^k$, $s\in[0,1]$ be the generating function of offspring. Let $\P^s(\cdot):=\P(\cdot\vert \mathcal{T}=\infty)$. Denote $\max\{a, 0\}$ by $a_+$ for any real number $a\in\mbb R$.
\begin{theorem}[Schr\"oder case]
\label{Schroder}
Assume \eqref{momoffspring}, \eqref{expmom}, \eqref{hyp3}, \eqref{hyp4} and $0<p_0+p_1<1$. Then for any positive sequence $(\ell_n)$ such that $\ell_n\uparrow\infty$ and that $\ell^*:=\lim_{n\rightarrow\infty}\frac{\ell_n}{n}$ exists with $\ell^*\in[0,\infty)$, we have
\begin{equation}\label{MDS}
\lim_{n\rightarrow\infty}\frac{1}{\ell_n}\log \P^s\l(M_n\leq m_n-\ell_n\r)=H(x^*, \gamma),
\end{equation}
where $\gamma=\log f'(q)$ and
\begin{eqnarray} H(x^*, \gamma)=\sup_{y\geq (x^*-\ell^*)_+}\frac{\gamma-I(x^*-\ell^*-y)}{\ell^*+y}=\sup_{a\geq \ell^*}\frac{\gamma-I(x^*-a)}{a}.
\end{eqnarray} In particular, we have for any $x<x^*$,
\begin{equation}\label{LDS}
\lim_{n\rightarrow\infty}\frac{1}{n}\log\P^s\left(M_n\leq x n\right) = (x^*-x)\sup_{a\leq x}\frac{-I(a)+\gz}{x^*-a}.
\end{equation}
\end{theorem}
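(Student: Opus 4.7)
The plan is to exploit the spine decomposition of $\mathcal T$ conditioned on survival. Let $Z_k^{\mathrm{imm}}$ be the number of immortal descendants of $\rho$ at generation $k$ and $T:=\inf\{k\ge 1:Z_k^{\mathrm{imm}}\ge 2\}$ the first branching time of the reduced tree. Harris's decomposition gives $\P^s(T>k)=f'(q)^k=e^{\gamma k}$, and on $\{T=k\}$ the positions $(S_0,\ldots,S_{k-1})$ along the unique immortal spine form a random walk with step $X$ independent of the subtrees hanging off it; from generation $T$ onwards the reduced tree carries at least two independent BRWs conditioned on survival, each satisfying \eqref{cvgM}. Two independent costs therefore compete to produce $\{M_n\le m_n-\ell_n\}$: a long spine (contribution $e^{\gamma k}$) and an atypically low spine endpoint $S_{k-1}\approx (x^*-a)(k-1)$ with $a=\ell_n/k$ (Cram\'er cost $e^{-kI(x^*-a)}$). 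Rewriting $k(\gamma-I(x^*-a))=\ell_n(\gamma-I(x^*-a))/a$ and optimising over $a\ge\ell^*$ produces the rate $H(x^*,\gamma)$.

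For the lower bound, fix $\eta>0$, pick $a^*\in[\max(\ell^*,x^*),\infty)$ attaining the supremum in $H(x^*,\gamma)$ to within $\eta$, and set $k_n:=\lfloor\ell_n/a^*\rfloor$. I would intersect the event $\{T=k_n\}$, the Cram\'er event $\{S_{k_n-1}\in[(x^*-a^*)(k_n-1)-1,(x^*-a^*)(k_n-1)]\}$, the event that each of the $\ge 2$ immortal sub-BRWs from generation $k_n$ has recentred maximum at generation $n$ below $m_{n-k_n}$, and the event that no mortal subtree along the spine carries an individual above $m_n-\ell_n$. The spine decomposition makes the first two events independent, with joint probability $\ge e^{k_n(\gamma-I(x^*-a^*))+o(\ell_n)}\ge e^{\ell_n(H-2\eta)}$ by Harris and Cram\'er; the third has probability bounded from below uniformly in $n$ by \eqref{cvgM} applied to the $\ge 2$ sub-BRWs; and the fourth holds with probability $1-o(1)$ by the exponential tails in \eqref{momoffspring}--\eqref{expmom}. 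Together they yield $\P^s(M_n\le m_n-\ell_n)\ge e^{\ell_n(H-3\eta)}$.

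For the upper bound I decompose according to the value of $T$. A quantitative form of \eqref{cvgM} together with the presence of at least two independent sub-BRWs on $\{T=k\}$ provides constants $C,c>0$ such that $\P^s\bigl(\max_i M^{(i)}_{n-k}\ge m_{n-k}-C\bigr)\ge c$ uniformly for $n-k$ large. Consequently, on $\{T=k\}\cap\{M_n\le m_n-\ell_n\}$ the spine endpoint is forced to satisfy $S_{k-1}\le m_n-m_{n-k}-\ell_n+C=(x^*-a_k)(k-1)+O(\log n)$ with $a_k=\ell_n/k$. Combining $\P^s(T=k)\le e^{\gamma(k-1)}$ with the Cram\'er upper bound on $S_{k-1}$ yields
\[
\P^s\bigl(M_n\le m_n-\ell_n,\,T=k\bigr)\le e^{k(\gamma-I(x^*-a_k))+o(\ell_n)}=e^{\ell_n(\gamma-I(x^*-a_k))/a_k+o(\ell_n)}\le e^{\ell_n H(x^*,\gamma)+o(\ell_n)},
\]
and a union bound over the polynomially many $1\le k\le n$ closes the upper bound.

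The main obstacle I anticipate is the uniform anti-concentration of $M^{(i)}_{n-k}$ around $m_{n-k}$ across all $k\le n$, which is not immediate from \eqref{cvgM} and requires a non-asymptotic quantitative version; the remaining small values of $n-k$ are handled separately because they contribute negligibly to the union bound. A secondary difficulty, controlling the mortal subtrees in the lower bound, uses a Chernoff estimate on the total mortal offspring together with the exponential tail of $X$ to show that no mortal individual exceeds $m_n-\ell_n$ with overwhelming probability. The "in particular" statement \eqref{LDS} finally follows by specialising \eqref{MDS} to $\ell_n=(x^*-x)n$ (so $\ell^*=x^*-x$) and substituting $b=x^*-a$ in the supremum defining $H(x^*,\gamma)$.
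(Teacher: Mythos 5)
Your lower bound is a workable variant of the paper's: where you force the reduced tree to be a single line up to generation $k_n$ (probability $f'(q)^{k_n}=e^{\gamma k_n}$), the paper conditions on $\{Z_{t_n}=b_1\}$ and invokes the Fleischmann--Wachtel lower-deviation estimate $\P^s(Z_{t_n}=b_1)=e^{\gamma t_n(1+o(1))}$; both events carry the same exponential cost, and the remaining ingredients (Cram\'er for the spine, tightness of $M_{n-k_n}-m_{n-k_n}$ from \eqref{cvgM}, control of the mortal subtrees via \eqref{expmom}) can be filled in.

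The upper bound, however, has a genuine gap. The claim that on $\{T=k\}\cap\{M_n\le m_n-\ell_n\}$ the spine endpoint ``is forced'' to satisfy $S_{k-1}\le m_n-m_{n-k}-\ell_n+C$ is not correct: your anti-concentration estimate only gives $\P\bigl(\max_i M^{(i)}_{n-k}\ge m_{n-k}-C\bigr)\ge c$, so on $\{S_{k-1}>s_k\}$ all you obtain is
\[
\P^s\bigl(M_n\le m_n-\ell_n,\ T=k,\ S_{k-1}>s_k\bigr)\ \le\ \P^s(T=k)\,(1-c),
\]
because after the first branching of the reduced tree there are only \emph{two} independent surviving sub-BRWs, and the event that both have maxima a constant $C$ below $m_{n-k}$ has probability bounded away from $0$ uniformly in $n$ and $k$ (it converges to a positive constant by \eqref{cvgM}). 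Summing $e^{\gamma(k-1)}(1-c)$ over $k\le n$ yields $O(1)$ --- no decay in $\ell_n$ whatsoever. The uncontrolled scenario is ``spine typical, both post-branching subtrees only slightly low, with the required lowness produced further down the tree''; excluding it would force you to recurse on $T$ inside each sub-BRW, which is circular as stated. (A secondary issue: for $k>\ell_n/x^*$ one has $a_k<x^*$ and $I(x^*-a_k)$ is not the Cram\'er rate of $\P(S_{k-1}\le (x^*-a_k)(k-1))$, though those $k$ contribute at most $e^{\gamma k}\le e^{\gamma\ell_n/x^*}\le e^{\ell_n H}$ and are harmless.) The paper circumvents the main problem by stopping not at the first branching but at the first time $T_n$ at which the population reaches $\ell_n^3$: at that moment there are at least $\ell_n^2$ independent sub-BRWs, so the ``all slightly low'' alternative costs $\P^s(M_{n-t\ell_n}\le m_{n-t\ell_n}-\varepsilon\ell_n)^{\ell_n^2}\le e^{-c_1\ell_n^2}$, negligible at scale $e^{\Theta(\ell_n)}$; the only surviving scenario is a single random walk (extracted via Lemma 5.1 of \cite{GH18}) descending by $\approx\ell_n$ during the thin phase, whose duration is controlled by \eqref{LDPGWa}. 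You would need to replace your first-branching time by such a polynomial-population stopping time (or an equivalent iterative device) for the upper bound to close.
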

\begin{remark}
(\ref{LDS}) was obtained first by Gantert and H\"ofelsauer in \cite{GH18}. In fact, it is shown in \cite{GH18} that for any $x<x^*$,
$$\lim_{n\rightarrow\infty}\frac{1}{n}\log\P^s\left(M_n\leq x n\right) =-\inf_{t\in(0,1]}\{-t\gz+tI((x-(1-t)x^*)/t)\}.$$
Then one can check that
\[-\inf_{t\in(0,1]}\{-t\gamma+tI((x-(1-t)x^*)/t)\}=(x^*-x)\sup_{a\leq x}\frac{-I(a)+\gz}{x^*-a}.\]
\end{remark}
\begin{remark}\label{Hu02}
When $\ell_n=o(\log n)$, \reff{MDS} was obtained by Hu in \cite{Hu16} in a more general framework. 
In fact, if restricted to our setting, then conditions (1.5) and (1.6) in \cite{Hu16} is equivalent to say that there exists a constant $t^*>0$ such that
$$
\log f'(q)+t^*x^*+\log \psi(-t^*)=0,\textrm{ and }\psi(-t)<\infty \textrm{ for some }t>t^*.
$$
Since $\ell_n=o(\log n)$, then $\ell^*=0.$ So conditions (1.5) and (1.6) in \cite{Hu16} make sure that $a^*:=x^*-(\log \psi(t))'\mid_{t=-t^*}$ is exactly the $\arg\max$ of $a\mapsto\frac{\gamma-I(x^*-a)}{a}$ on $[0,\infty)$; i.e.;

$$
\frac{\gamma-I(x^*-a^*)}{a^*}=\sup_{a\geq 0}\frac{\gamma-I(x^*-a)}{a}=t^*.
$$
\end{remark}
\begin{remark}\label{Hu02a}
If all assumptions in Theorem \ref{Schroder} hold, then by Theorem 1.3 in \cite{Hu16}, we have
\[
\P(0<D_{\infty}<\varepsilon)\asymp \varepsilon^{t^*},\quad \textrm{as }\varepsilon\rar0.
\]
\end{remark}
\paragraph{General strategy:}  Let us explain our main ideas here, especially  for  $\P(M_n\leq m_n-\ell_n)$ in B\"ottcher case. Intuitively, to get an unusually lower maximum, we need to control both the size of the genealogical tree and the displacements of individuals. More precisely, we need that at the very beginning, the size of the genealogical tree is small with all individuals moving to some atypically lower place. So, we take some intermediate time $t_n$ and suppose that the genealogical tree is $b$-regular up to time $t_n$ and that all individuals at time $t_n$ are located below certain ``critical" position $-c_n$. Then the system continues with $b^{t_n}$ i.i.d. branching random walks started from places below $-c_n$. By choosing $t_n$ and $c_n=\Theta(\ell_n)$ in an appropriate way, we can expect that the maximum at time $n$ stays below $m_n-\ell_n$ with high probability.

Note that, the time $t_n$ varies in different cases. If the step size is bounded from below, $t_n=\Theta(\ell_n)$. If the step size has Weibull tail or Gumbel tail, $t_n=o(\ell_n)$.

Our arguments and techniques are also inspired by \cite{CH17} where we studied the large deviation of empirical distribution of branching random walk. All these ideas work also for studying the small ball probability of $D_\infty$.

The rest of this paper is organized as follows. We treat the cases with bounded step size in Section 2. Then, Section 3 proves Theorems \ref{weibulltail} and \ref{gumbeltail}, concerning the cases with unbounded step size. In Section \ref{lefttailD}, we study $\P(0<D_\infty<\varepsilon)$ and prove Propositions \ref{weibulltailleft} and \ref{gumbeltailleft}. Finally, we prove Theorems \ref{Schroder} for Schr\"oder case in Section 5.

Let $C_1, C_2, \cdots$ and $c_1, c_2, \cdots$ denote positive constants whose values may change from line to line.

\section{B\"ottcher case with step size bounded on the left side: Proofs of Theorems \ref{BDLD}, \ref{BDMD}:}

In this section, we always suppose that $b\geq2$ and $\text{ess inf }X=-L$ with $L\in(0,\infty)$. Assumption \eqref{expmom} yields that
$M_n=x^*n+o(n)$ with $x^*\in(0,\infty)$. We are going to prove that for any $-L<x<x^*$,
\begin{equation}\label{LDBdPf}
\P(M_n\leq xn)=e^{-e^{(1+o(1))\beta(x^*-x)n}},\quad \textrm{as } n\rightarrow\infty,
\end{equation}
with $\beta=\frac{\log b}{x^*+L}$. Next, for the second order of $M_n$, there are several regimes. We assume \eqref{hyp3} and \eqref{hyp4} to get the classical one: $M_n=m_n+o_\P(\log n)$ with $m_n=x^*n-\frac{3}{2\theta^*}\log n$. In this regime, we are going to prove that for any positive sequence $\ell_n\uparrow\infty$  such that $\limsup_{n\rar\infty}\frac{\ell_n}{n}<x^*+L$,
\begin{equation}\label{MDBdPf}
\P(M_n\leq m_n-\ell_n)=e^{-e^{(1+o(1))\beta\ell_n}}, \quad\textrm{as } n\rightarrow\infty.
\end{equation}
The proofs of \eqref{LDBdPf} and \eqref{MDBdPf} basically follow the same ideas. But \eqref{LDBdPf} needs to be treated in a more general regime, without second order estimates.

For later use, let us introduce the counting measures as follows: for any $B\subset \mbb R$,
\[
Z_n(B):=\sum_{|u|=n}1_{S_u\in B}, \quad\forall n\geq0.
\]
For simplicity, we write $Z_n$ for $Z_n(\mbb R)$ to represent the total population of the $n$-th generation. It is clear that $Z_n\geq b^n$. For any $u\in\cal T$, let
\[
M^u_n:=\max_{|z|=n+|u|, z\geq u}\{S_z-S_u\},\quad \forall n\geq0,
\]
be the maximal relative position of descendants of $u$. Clearly, $(M^u_n)_{n\geq0}$ is distributed as $(M_n)_{n\geq0}$.
\subsection{Proof of Theorem \ref{BDLD}}

In this section, we show that for any $x\in (-L, x^*)$, \eqref{LDBdPf} holds. We use $t_n^-$ to denote the intermediate time chosen for the lower bounds and $t^+_n$ for upper bounds.

\subsubsection{Lower bound of Theorem \ref{BDLD}}
\noindent  As $x>-L$, let $L':=L-\eta$ with any sufficiently small $\eta>0$ such that $x>-L+\eta$. Notice that $\text{ess inf }X=-L$ implies that $\P(X\leq -L')>0$ for any $\eta>0$.  Observe that for some intermediate time $t_n^-$, whose value will be determined later, if we let every individual before the $t_n^-$-th generation make a displacement less than $-L'$, then
\begin{align*}
\P(M_n\leq xn)\geq&\P(Z_{t_n^-}=b^{t_n^-}; \forall |u|=t_n^-, S_u\leq -L't_n^-; M_n\leq xn )\\
\geq & \P(Z_{t_n^-}=b^{t_n^-}; \forall |u|=t_n^-, S_u\leq -L't_n^-; \max_{|u|=t_n^-}M^u_{n-t_n^-}\leq xn+L't_n^- ),
\end{align*}
where $\{M^u_{n-t_n^-}\}$ are i.i.d. copies of $M_{n-t_n^-}$. By Markov property at time $t_n^-$, one gets that
\begin{align}\label{lowerbd1}
\P(M_n\leq xn)\geq&\P(Z_{t_n^-}=b^{t_n^-}; \forall |u|=t_n^-, S_u\leq -L't_n^-) \P(M_{n-t_n^-}\leq xn+L't_n^-)^{b^{t_n^-}}\cr
\geq & \P(Z_{t_n^-}=b^{t_n^-}; \forall 1\leq |u|\leq t_n^-, X_u \leq-L') \P(M_{n-t_n^-}\leq xn+Lt_n^-)^{b^{t_n^-}}\cr
=&p_b^{\sum_{k=0}^{t_n^--1}b^k}\P(X\leq-L')^{\sum_{k=1}^{t_n^-}b^k}\P(M_{n-t_n^-}\leq xn+L't_n^-)^{b^{t_n^-}}.
\end{align}
Next, we shall estimate $\P(M_{n-t_n^-}\leq xn+L't_n^-)^{b^{t_n^-}}$. The sequel of this proof will be divided into two subparts depending on whether $x^*=R:=\text{ess sup }X$ or not, respectively.

{\it Subpart 1: the case  $x^*=R$.} Note that we have $R<\infty$ now. Take $t_n^-=\lceil\frac{(R-x)n}{R+L'}\rceil$ so that $xn+L't_n^-\geq R(n-t_n^-)$. Thus,
\[
\P(M_{n-t_n^-}\leq xn+L't_n^-)^{b^{t_n^-}}=1.
\]
Going back to \eqref{lowerbd1}, one sees that for some $C\in \mbb R_+^*$,
\begin{equation}\label{lowerbd2}
\P(M_n\leq xn)\geq p_b^{\frac{b^{t_n^-}-1}{b-1}} \l(\P(X\leq -L')\r) ^{\frac{b^{t_n^-+1}-b}{b-1}}\geq e^{-C b^{t_n^-}}.
\end{equation}
It follows readily that for any $x\in (-L, x^*)$,
\begin{equation}\label{lowerbd3}
\limsup_{n\rightarrow\infty}\frac{1}{n}\log[-\log\P(M_n\leq xn)]\leq \frac{x^*-x}{x^*+L-\eta}\log b.
\end{equation}
Letting $\eta\downarrow0$ yields what we need.

{\it Subpart 2: the case $x^*<R\in(0,\infty]$.} Now we have $I(x^*)=\log m$ because $I$ is finite and continuous in $(0,R)$.
Moreover, $I(x)<\infty$ for some $x>x^*$. For any sufficiently small $a>0$, one has
\[
\log m<I(x^*+a)<\infty, \textrm{ and } \lim_{a\downarrow0}I(x^*+a)=I(x^*)=\log m.
\]
Recall that $-x<L'$. Let
$t=\frac{x^*+a-x}{x^*+L'+a}$ and $t_n^-=\lceil tn\rceil$ so that
 ${xn+L't_n^-}>(x^*+a)(n-t_n^-)\gg1$ for all $n$ large enough.  Therefore,
\begin{align*}
\P(M_{n-t_n^-}\leq xn+L't_n^-)^{b^{t_n^-}} &\geq \l(1-\P(M_{n-t_n^-}>(x^*+a)(n-t_n^-))\r)^{b^{t_n^-}}.
\end{align*}
Here we apply the large deviation result obtained in \cite{GH18}. More precisely, as the maximum of independent random walks dominates stochastically $M_n$, one has
\[
\P(M_{n-t_n^-}>(x^*+a)(n-t_n^-))\leq \E[Z_{n-t_n^-}]\P(S_{n-t_n^-}\geq (x^*+a)(n-t_n^-))\leq e^{-\l(I(x^*+a)-\log m\r)(n-t_n^-)}
\]
which yields that
\[
\P(M_{n-t_n^-}\leq xn+L't_n^-)^{b^{t_n^-}} \geq \l(1-e^{-\l(I(x^*+a)-\log m\r)(n-t_n^-)}\r)^{b^{t_n^-}}.
\]
Note that $\log (1-x)\geq -2x$ for any $x\in[0,1/2]$. Let $\delta(a):=I(x^*+a)-\log m$. Then for all sufficiently large $n\geq1$,
\[
\P(M_{n-t_n^-}\leq xn+L't_n^-)^{b^{t_n^-}} \geq e^{-2 e^{-\delta(a)(n-t_n^-)}b^{t_n^-}}.
\]
Plugging this into \eqref{lowerbd1} implies that
\begin{equation}\label{lowerbd5}
\P(M_n\leq xn)\geq e^{-Cb^{t_n^-}}e^{-2 e^{-\delta(a)(n-t_n^-)}b^{t_n^-}}.
\end{equation}
Thus we have
\begin{equation}\label{lowerbd7}
\limsup_{n\rightarrow\infty}\frac{1}{n}\log[-\log\P(M_n\leq xn)]\leq t\log b.
\end{equation}
Since $I(x^*)=\log m$,  letting $a\downarrow0$ (hence $t\downarrow\frac{x^*-x}{x^*+L'}$  and $\delta(a)\downarrow0$) gives \[
\limsup_{n\rightarrow\infty}\frac{1}{n}\log\left[-\log\P\left(M_n\leq xn\right) \right] \leq \frac{x^*-x}{x^*+L-\eta} \log b,
\]
which implies the desired lower bound because $\eta$ is arbitrary small. \qed

\subsubsection{Upper bound of Theorem \ref{BDLD}}
In this section, we show that
\[
\P(M_n\leq xn)\leq e^{- b^{\frac{(x^*-x)n}{x^*+L}+o(n)}}.
\]
Note that for any $1\leq t_n^+\leq n$, $Z_{t_n^+}(\cdot)$ is supported by $[-Lt_n^+,\infty)$ a.s. Moreover, $Z_{t_n^+}\geq b^{t_n^+}$. Observe that
\begin{align}\label{upbd4}
\P(M_n\leq xn)\leq  &\P\l(Z_{t_n^+}([-Lt_n^+,\infty))\geq  b^{t_n^+}; M_n\leq xn\r)\nonumber\\
\leq &\P\l(Z_{t_n^+}([-Lt_n^+,\infty))\geq  b^{t_n^+}; \max_{|u|=t_n^+; S_u\geq -Lt_n^+}(S_u+M_{n-t_n^+}^u)\leq xn\r)\nonumber\\
\leq & \P\l(M_{n-t_n^+}\leq xn+Lt_n^+\r)^{ b^{t_n^+}}.
\end{align}

It remains to estimate $\P(M_{n-t_n^+}\leq xn+Lt_n^+)^{ b^{t_n^+}}$. Again, the proof will  be divided into two subparts.

{\it Subpart 1: the case $x^*=R$.} By taking $t_n^+=\lfloor \frac{(R-x)n}{R+L}\rfloor-1$ so that $xn+Lt_n^+< R(n-t_n^+)$, one has
\begin{align}\label{upbdsp1}
\P(M_{n-t_n^+}\leq xn+Lt_n^+)^{ b^{t_n^+}}\leq &\P(M_{n-t_n^+}<R(n-t_n^+))^{b^{t_n^+}}\cr
= &\l(1-\P(M_{n-t_n^+}\geq R(n-t_n^+))\r)^{ b^{t_n^+}}\nonumber\\
\leq &\l(1-\frac{c}{n-t_n^+}\r)^{ b^{t_n^+}}\leq e^{-c\frac{ b^{t_n^+}}{(n-t_n^+)}},
\end{align}
where we use the fact that $\P(M_N\geq RN )\geq c/N$ for some $c\in (0,1)$ and all $N\geq1$. In fact, we could construct a Galton-Watson tree with offspring $\sum_{|u|=1}1_{X_u=R}$. Here $\E[\sum_{|u|=1}1_{X_u=R}]=m\P(X=R)\geq 1$ since $x^*=R$. Its survival probability is positive if $\E[\sum_{|u|=1}1_{X_u=R}]>1$. Even when $\E[\sum_{|u|=1}1_{X_u=R}]=1$, it is critical and the survival probability up to generation $N$ is larger than $c/N$ for some $c>0$ and for all $N\geq1$. In fact, its survival up to generation $N$ implies that some individual at time $N$ has position $RN$. So, $\P(M_N\geq RN )\geq c/N$. We hence conclude from \reff{upbd4} and \eqref{upbdsp1} that
\[
\liminf_{n\rightarrow\infty}\frac{1}{n}\log[-\log\P(M_n\leq xn)]\geq \frac{(R-x)\log b}{R+L}.
\]

{\it Subpart 2: the case $x^*<R$.} First recall a result from \cite{GH18}( see Theorem 3.2) which says that
\begin{equation}\label{GH18}
\lim_{n\rar\infty}\frac{1}{n}\log \P(M_n> xn)=\log m- I(x),\quad \textrm{ for } x>x^*.
\end{equation}
So for any sufficiently small $a>0$ such that $\delta(a)=I(x^*+a)-\log m\in(0,\infty)$, for any $x>-L$, let $t=\frac{x^*+a-x}{L+x^*+a}\in(0,1)$ and $t_n^+=\lfloor t n\rfloor$ so that
$x^*<\frac{xn+Lt_n^+}{n-t_n^+}\leq x^*+a$.
Then  for all $n$ large enough,
\begin{align}\label{upbd5}
\P(M_{n-t_n^+}\leq xn+Lt_n^+)^{ b^{t_n^+}} &=\l(1-\P\l(M_{n-t_n^+}>\frac{xn+Lt_n^+}{n-t_n^+}(n-t_n^+)\r)\r)^{ b^{t_n^+}}\nonumber\\
&\leq  \l(1-\P\l(M_{n-t_n^+}> (x^*+a)(n-t_n^+)\r)\r)^{ b^{t_n^+}}\cr
&\leq \l(1-\exp\l\{ -\l(I\l( x^*+a\r)-\log m+{\delta(a)}\r)(n-t_n^+)\r\}\r)^{ b^{t_n^+}}\cr
&\leq e^{- e^{-2\delta(a)(n-t_n^+)} b^{t_n^+}},
\end{align}
where the second inequality follows from \reff{GH18}. Plugging (\ref{upbd5}) into (\ref{upbd4}) yields that
\[
\liminf_{n\rightarrow\infty}\frac{1}{n}\log[-\log\P(M_n\leq xn)]\geq -2\delta(a)(1-t)+t\log b.
\]
Again  letting $a\downarrow0$  (hence $\delta(a)\downarrow 0$ and $t\downarrow\frac{x^*-x}{x^*+L}$) gives the desired upper bound.

If $\P(X=-L)>0$, then the arguments for lower bound work well for $x=-L$ and $L'=L$. For the upper bound, it is easy to see that all displacements are $-L$ up to the $n$-th generation. We thus could also obtain \reff{LDBd} for $x=-L$. \qed

\subsection{Proof of Theorem \ref{BDMD}}

From now on, we assume \eqref{hyp3} and \eqref{hyp4} so that $M_n=m_n+o_\P(\log n)$. Moreover, it is known in \cite{A13} that $M_n-m_n$ converges in law to some random variable on the survival of $\cal T$. In fact, \eqref{hyp4} is slightly stronger than the conditions given in \cite{A13}. Because of this convergence in law in B\"ottcher case, we can find some $y^*\in\mbb R_+$ so that
\begin{equation}\label{choiceofy}
\P(M_n\leq m_n-y^*)\leq 1/2 \leq \P(M_n \leq m_n+y^*).
\end{equation}

Now we are ready to prove that for any increasing sequence $\ell_n=O(n)$ such that $\ell_n\uparrow\infty$ and $\limsup_{n\rar\infty}\frac{\ell_n}{n}<x^*+L$,
\begin{equation}\label{MDbdeq}
\P\left(M_n\leq m_n-\ell_n\right)=e^{-e^{\ell_n\beta (1+o_n(1))}},
\end{equation}
where $\beta=\frac{\log b}{x^*+L}$ and $m_n=x^*n-\frac{3}{2\theta^*}\log n$.
\subsubsection{Lower bound of Theorem \ref{BDMD}}
Similarly to the previous section on large deviation, let us again take some intermediate time $t_n^-\in [1,n-1]$ and $L'=L-\eta$ with $\eta>0$,
\begin{align*}
\P(M_n\leq m_n-\ell_n)\geq & \P\l(Z_{t_n^-}=b^{t_n^-}; \forall |u|\leq t_n^-, X_u\leq -L'; M_n\leq m_n-\ell_n\r)\\
\geq & \P\l(Z_{t_n^-}=b^{t_n^-}; \forall |u|\leq t_n^-, X_u\leq -L'; \max_{|v|=t_n^-}M^v_{n-t_n^-}\leq m_n-\ell_n+L't_n^-\r),
\end{align*}
which by branching property is larger than
\begin{align*}
\P\l(Z_{t_n^-}=b^{t_n^-}; \forall |u|\leq t_n^-, X_u\leq -L'\r) \P(M_{n-t_n^-}\leq m_n-\ell_n+L't_n^-)^{b^{t_n^-}}.
\end{align*}
Here we choose $t_n^-=\lceil\frac{\ell_n+K_0}{L'+x^*}\rceil$ with $K_0\geq1$ a fixed large constant so that $m_n-\ell_n+L't_n^-\geq m_{n-t_n^-}+y^*$. Consequently,
\begin{align*}
\P(M_n\leq m_n-\ell_n)\geq & \P\l(Z_{t_n^-}=b^{t_n^-}; \forall |u|\leq t_n^-, X_u\leq -L'\r) \P(M_{n-t_n^-}\leq m_{n-t_n^-}+y^*)^{b^{t_n^-}}\\
\geq & p_b^{\sum_{k=0}^{t_n^--1}b^k}\P(X\leq -L')^{\sum_{k=1}^{t_n^-}b^k}\P(M_{n-t_n^-}\leq m_{n-t_n^-}+y^*)^{b^{t_n^-}},
\end{align*}
where the last inequality holds because of the independence between offsprings and motions. Now note that $-L=\text{ ess inf } X$ means that $q_L:=\P(X\leq-L')\in(0,1)$. By \eqref{choiceofy},
\begin{align*}
\P(M_n\leq m_n-\ell_n)\geq & p_b^{\sum_{k=0}^{t_n^--1}b^k}q_L^{\sum_{k=1}^{t_n^-}b^k}(1/2)^{b^{t_n^-}}=e^{-\Theta(b^{t_n^-})},
\end{align*}
with $t_n^-=\frac{\ell_n+K_0}{L+x^*-\eta}$. Letting $n\rightarrow\infty$ then $\eta\rightarrow0$ gives that
\[
\limsup_{n\rightarrow\infty}\frac{1}{\ell_n}\log[-\log\P(M_n\leq m_n-\ell_n)]\leq \frac{\log b}{L+x^*}.
\]
\subsubsection{Upper bound of Theorem \ref{BDMD}}
Let $B_n=[-Lt_n^+,\infty)$ with some intermediate time $t_n^+$ to be determined later. Observe that
\begin{align*}
\P(M_n\leq m_n-\ell_n)= &\P\l(Z_{t_n^+}(B_n)\geq b^{t_n^+}; M_n\leq m_n-\ell_n\r)\\
= & \P\l(Z_{t_n^+}(B_n)\geq  b^{t_n^+}; \max_{|u|=t_n^+, S_u\in B_n}(S_u+M^u_{n-t_n^+})\leq m_n-\ell_n\r)\\
\leq & \P\l(Z_{t_n^+}(B_n)\geq  b^{t_n^+}; \max_{|u|=t_n^+, S_u\in B_n}M^u_{n-t_n^+}\leq m_n-\ell_n+Lt_n^+\r)
\end{align*}
Let $t_n^+:=\lfloor\frac{\ell_n-y^*}{L+x^*}\rfloor$ so that $m_n-\ell_n+Lt_n^+\leq m_{n-t_n^+}-y^*$. Then by \eqref{choiceofy},
\begin{align}
\P\l(Z_{t_n^+}(B_n)\geq  b^{t_n^+}; \max_{|u|=t_n^+, S_u\in B_n}M^u_{n-t_n^+}\leq m_n-\ell_n+Lt_n^+\r)\leq& \P(M_{n-t_n^+}\leq m_{n-t_n^+}-y)^{ b^{t_n^+}}\nonumber\\
\leq& (1/2)^{ b^{t_n^+}}.
\end{align}
We hence obtain that
\[
\P(M_n\leq m_n-\ell_n)\leq e^{-c b^{t_n^+}},
\]
with $b^{t_n^+}=\Theta(e^{\beta\ell_n})$. This suffices to conclude Theorem \ref{BDMD}.
\section{B\"ottcher case with step size of (super)-exponential left tail}
\subsection{Proof of Theorem \ref{weibulltail}: step size of Weibull tail}

Given Weibull tail distribution for the step size $X$, we are going to prove that, for any increasing sequence $(\ell_n)$ such that $\ell_n\leq O(n)$ and $\ell_n\uparrow\infty$, one has
\begin{equation}
\lim_{n\rightarrow\infty}\frac{1}{\ell_n^\alpha}\log\P\left(M_n\leq x^*n -\frac{3}{2\theta^*}\log n -\ell_n\right) =  -\lambda\l(b^{\frac{1}{\alpha-1}}-1\r)^{\alpha-1},
\end{equation}
where $\l(b^{\frac{1}{\alpha-1}}-1\r)^{\alpha-1}=b$ for $\alpha=1$.

\subsubsection{Lower bound of Theorem \ref{weibulltail}}
\label{lowerbdweibull}
\paragraph{The case $\alpha=1$}

In this case, we could show that
\[
\P(M_n\leq m_n-\ell_n)\geq e^{-\lambda \ell_n b}.
\]
In fact, at the first generation, we suppose that there are exactly $b$ individuals and that all of them are located below $-(\ell_n+x^*+y^*)$. So, as $m_n-\ell_n+(\ell_n+x^*+y^*)\geq m_{n-1}+y^*$,
\begin{align*}
\P(M_n\leq m_n-\ell_n)\geq &\P(Z_1=b; \forall |u|=1, X_u\leq -(\ell_n+x^*+y^*); M_n\leq m_n-\ell_n)\\
= &\P(Z_1=b; \forall |u|=1, X_u\leq -(\ell_n+x^*+y^*); \max_{|u|=1}(X_u+M^u_{n-1})\leq m_n-\ell_n)\\
\geq &\P\l(Z_1=b; \forall |u|=1, X_u\leq -(\ell_n+x^*+y^*); \max_{|u|=1}(M^u_{n-1})\leq m_{n-1}+y^*\r).
\end{align*}
By Markov property, this implies that
\begin{align*}
\P(M_n\leq m_n-\ell_n)\geq &\P\l(Z_1=b; \forall |u|=1, X_u\leq -(\ell_n+x^*+y^*)\r)\P\l(M_{n-1}\leq m_{n-1}+y^*\r)^b\\
=& p_b \P(X\leq -(\ell_n+x^*+y^*))^b\P\l(M_{n-1}\leq m_{n-1}+y^*\r)^b,
\end{align*}
where $\P(X\leq -(\ell_n+x^*+y^*))=\Theta(1)e^{-\lambda \ell_n}$ and $\P\l(M_{n-1}\leq m_n+y^*\r)\geq 1/2$. Consequently,
\[
\P(M_n\leq m_n-\ell_n)\geq \Theta(1) e^{-\lambda \ell_n b}.
\]
\paragraph{The case $\alpha>1$}

We prove here that
\[
\liminf_{n\rightarrow\infty}\frac{1}{\ell_n^\alpha}\log\P\left(M_n\leq m_n-\ell_n\right) \geq -\lambda\l(b^{\frac{1}{\alpha-1}}-1\r)^{\alpha-1}.
\]

By the assumption of Theorem \ref{weibulltail}, there exist two constants $0<c<1$ and $0<C<\infty$ such that for any $x>0$,
\begin{equation}\label{supexptail}
ce^{-\lambda x^\alpha}\leq \P(X\leq -x) \leq C e^{-\lambda x^\alpha}.
\end{equation}

We choose $t_n^-=o(\ell_n)$ such that $t_n^-\uparrow\infty$ and suppose that up to the $t_n^-$-th generation, the genealogical tree is a $b$-regular tree. For any $|u|=k$ with $1\leq k\leq t_n^-$, we suppose that its displacement $X_u$ is less than $- a_k $ with some $a_k>0$. We will determine the sequence $(a_k)_{k\geq1}$ later. Therefore,
\begin{align*}
&\P(M_n\leq m_n-\ell_n)\geq \P\l(Z_{t_n^-}=b^{t_n^-}; \forall |u|=k\in \{1,\cdots, t_n^-\}, X_u<-a_k ; M_n\leq m_n-\ell_n \r)\\
\geq &\P\l(Z_{t_n^-}=b^{t_n^-}; \forall |u|=k\in \{1,\cdots, t_n^-\}, X_u<-a_k; \max_{|z|=t_n^-}(S_z+M_{n-t_n^-}^z)\leq m_n-\ell_n \r)\\
\geq & \P\l(Z_{t_n^-}=b^{t_n^-}; \forall |u|=k\in \{1,\cdots, t_n^-\}, X_u<-a_k; \max_{|z|=t_n^-}(M_{n-t_n^-}^z)\leq m_n-\ell_n+\sum_{k=1}^{t_n^-} a_k  \r).
\end{align*}
Once again by Markov property,  one has
\begin{multline}\label{cutattn}
\P(M_n\leq m_n-\ell_n)\\
\geq  \P\l(Z_{t_n^-}=b^{t_n^-}; \forall |u|=k\in \{1,\cdots, t_n^-\}, X_u<-a_k \r)\P\l(M_{n-t_n^-}\leq m_n-\ell_n+\sum_{k=1}^{t_n^-} a_k\r)^{b^{t_n^-}}.
\end{multline}
For the first term on the right hand side, by independence of branching structure and displacements,
\begin{align*}
&\P\l(Z_{t_n^-}=b^{t_n^-}; \forall |u|=k\in \{1,\cdots, t_n^-\}, X_u<-a_k \r)\\
=& p_b^{\sum_{k=0}^{t_n^--1}b^k}\prod_{k=1}^{t_n^-} \P(X<-a_k )^{b^k},
\end{align*}
which by \eqref{supexptail}, is larger than
\begin{align}\label{probefore}
p_b^{\frac{b^{t_n^-}-1}{b-1}} \prod_{k=1}^{t_n^-} c^{b^k}e^{-\lambda (a_k)^\alpha b^k}
= p_b^{\frac{b^{t_n^-}-1}{b-1}} c^{\frac{b^{t_n^-+1}-b}{b-1}}\exp\{-\lambda  \sum_{k=1}^{t_n^-}a_k^{\alpha} b^k\}.
\end{align}
Now, we take the values of $a_k$. Let $b_\alpha:=b^{\frac{1}{\alpha-1}}$ and $a_k=\frac{ (b_\alpha-1)}{b_\alpha^k}\ell_n$. Note that $\sum_{k=1}^{t_n^-}a_k=(1-b_\alpha^{-t_n^-})\ell_n$. Take $t_n^-=(\alpha-1)\frac{\log \ell_n}{\log b}$ so that for $n$ large enough,
\begin{equation}\label{choixa}
m_n-\ell_n+\sum_{k=1}^{t_n^-} a_k =m_n-\ell_n+ (1-b_\alpha^{-t_n^-})\ell_n \geq m_{n-t_n^-}+y^*.
\end{equation}
Meanwhile, one obtains that
\[
b^{t_n^-}=\ell_n^{\alpha-1}, \textrm{ and } \sum_{k=1}^{t_n^-}a_k^\alpha b^k=\ell_n^\alpha (b_\alpha-1)^{\alpha-1} (1-b_\alpha^{-t_n^-}).
\]
Plugging them into \eqref{probefore} yields that
\begin{eqnarray}\label{probefore01}
\P\l(Z_{t_n^-}=b^{t_n^-}; \forall |u|=k\in \{1,\cdots, t_n^-\}, X_u<-a_k \r)
\geq \exp\{- \lambda \ell_n^\alpha (b_\alpha-1)^{\alpha-1}-\Theta(\ell_n^{\alpha-1})\}.
\end{eqnarray}
Applying it and \eqref{choixa} to \eqref{cutattn} yields that
\begin{align*}
\P(M_n\leq m_n-\ell_n)\geq &\exp\{- \lambda \ell_n^\alpha (b_\alpha-1)^{\alpha-1}- \Theta( \ell_n^{\alpha-1})\}\P\l(M_{n-t_n^-}\leq m_{n-t_n^-}+y^*\r)^{b^{t_n^-}}\\
\geq & \exp\{- \lambda \ell_n^\alpha (b_\alpha-1)^{\alpha-1}- \Theta( \ell_n^{\alpha-1})\}(1/2)^{\ell_n^{\alpha-1}}.
\end{align*}
As a result,
\begin{equation}\label{Flowerbdweibull}
\P(M_n\leq m_n-\ell_n)\geq \exp\{-\lambda \ell_n^\alpha (b_\alpha-1)^{\alpha-1}- \Theta( \ell_n^{\alpha-1})\}.
\end{equation}
\subsubsection{Upper bound of Theorem \ref{weibulltail}}
\label{upbdWeibull}

In this section, we consider the upper bound of $\P(M_n\leq m_n-\ell_n)$. First we state the following lemma which gives a rough upper bound.

\begin{lem}\label{rupbd1}
Assume \eqref{momoffspring}, \eqref{expmom}, \eqref{hyp3}, \eqref{hyp4} and $b\geq2$. For any $\theta>0$ such that $\E[e^{-\theta X}]<\infty$ and for $n$ sufficiently large, we have
\begin{equation}\label{roughupbd1}
 \P(M_n\leq m_n-\ell_n)\leq e^{-\theta\ell_n/2}.
\end{equation}
\end{lem}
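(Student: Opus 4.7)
The plan is to derive the bound by induction on $n$, exploiting the B\"ottcher $b$-th power in the branching recursion. Setting $p_n(\ell):=\P(M_n\leq m_n-\ell)$, the aim is to establish thresholds $\ell_0$ and $n_0$ (depending on $\theta$ but not on the sequence $(\ell_n)$) such that $p_n(\ell)\leq e^{-\theta\ell/2}$ for all $n\geq n_0$ and $\ell\geq \ell_0$. Applied to $\ell=\ell_n$, which lies above $\ell_0$ for $n$ large since $\ell_n\uparrow\infty$, this delivers the lemma.

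\paragraph{B\"ottcher recursion.} Conditioning on the first generation and using $p_0=p_1=0$, the offspring generating function satisfies $f(s)\leq s^b$ for $s\in[0,1]$. Combined with the standard branching identity $F_n(y)=f(\E[F_{n-1}(y-X)])$, this yields
\[
p_n(\ell)\leq \E\bigl[p_{n-1}(\ell+X-x^*-\delta_n)\bigr]^b, \qquad \delta_n := m_n-m_{n-1}-x^*=O(1/n).
\]

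\paragraph{Base case and inductive step.} For the base case, bounding $M_{n_0}$ below by the position $S_u$ of a single leaf at generation $n_0$ (a sum of $n_0$ i.i.d.\ copies of $X$) and applying Markov gives $p_{n_0}(\ell)\leq \psi(-\theta)^{n_0}e^{\theta m_{n_0}}e^{-\theta\ell}$, which is $\leq e^{-\theta\ell/2}$ for $\ell$ beyond a threshold depending on $n_0$ and $\theta$. For the inductive step, assume $p_{n-1}(\ell')\leq e^{-\theta\ell'/2}$ for $\ell'\geq \ell_0$ and $p_{n-1}\leq 1$ otherwise. With $Y:=\ell+X-x^*-\delta_n$, split
\[
\E[p_{n-1}(Y)]\leq \P(Y<\ell_0)+\E[e^{-\theta Y/2}].
\]
The second expectation equals $C_2 e^{-\theta\ell/2}$ with $C_2=e^{\theta(x^*+\delta_n)/2}\E[e^{-\theta X/2}]$ (which is finite since $\psi(-\theta)<\infty$ forces $\psi(-\theta/2)<\infty$). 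The first term is controlled by Markov at an exponent $\theta'\in(\theta/2,K)$ available through \eqref{expmom}, producing a contribution of order $e^{-\theta'\ell}$ that is dominated by the second for $\ell$ large. Raising the resulting sum to the $b$-th power and using $b\geq 2$ gives
\[
p_n(\ell)\leq \bigl(C_1 e^{-\theta'\ell}+C_2 e^{-\theta\ell/2}\bigr)^b \leq C' e^{-\theta b\ell/2},
\]
which is at most $e^{-\theta\ell/2}$ once $\ell$ exceeds a threshold that can be chosen uniformly in $n$.

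\paragraph{Main obstacle.} The delicate point is propagating the inductive threshold $\ell_0$ uniformly across iterations rather than allowing it to grow with $n$. The B\"ottcher assumption $b\geq 2$ is precisely what provides the exponent boost $\theta/2\to \theta b/2$, giving enough slack to absorb the multiplicative constants arising from the Markov-type tail control and from the shift by $x^*+\delta_n$. Without $b\geq 2$ this bootstrap would collapse, which is consistent with the fact that the rough rate $e^{-\theta\ell_n/2}$ is special to the B\"ottcher regime.
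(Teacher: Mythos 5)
Your overall architecture (a B\"ottcher recursion $p_n(\ell)\leq \E[p_{n-1}(\ell+X-x^*-\delta_n)]^b$ bootstrapped by induction) is different from the paper's, which instead cuts at an intermediate time $t_n=\frac{2}{\log b}\log\ell_n$, splits on whether all $b^{t_n}$ particles at time $t_n$ sit above $-(1-\varepsilon)\ell_n$, bounds the good event by $\P(M_{n-t_n}\leq m_{n-t_n}-y^*)^{b^{t_n}}\leq (1/2)^{\ell_n^2}$ using the convergence in law of $M_N-m_N$, and bounds the bad event by a first-moment Chernoff estimate $e^{-\theta(1-\varepsilon)\ell_n+\Theta(t_n)}$. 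Unfortunately, your induction does not close, and the failure occurs exactly at the point you flag as ``the delicate point.''

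Concretely, the inductive step must produce $p_n(\ell)\leq e^{-\theta\ell/2}$ for \emph{all} $\ell\geq\ell_0$, with the same $\ell_0$ as in the hypothesis. But at $\ell=\ell_0$ your split gives $\E[p_{n-1}(Y)]\leq \P(Y<\ell_0)+C_2e^{-\theta\ell_0/2}$ with $\P(Y<\ell_0)=\P(X<x^*+\delta_n)\geq\P(X\leq 0)>0$ (since $\E[X]=0$), a constant bounded away from $0$. Hence the step only yields $p_n(\ell_0)\leq(\mathrm{const})^b$, which is not $\leq e^{-\theta\ell_0/2}$ once $\ell_0>\frac{2b}{\theta}\log\frac{1}{\P(X<x^*)}$ --- and $\ell_0$ must simultaneously be taken large to handle the base case and to absorb $C_2^b$, so for large $\theta$ these requirements are incompatible. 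Equivalently: your Chernoff bound is $\P(Y<\ell_0)\leq C_1e^{-\theta'\ell}$ with $C_1=\Theta(e^{\theta'\ell_0})$, so the domination $C_1e^{-\theta'\ell}\leq C_2e^{-\theta\ell/2}$ holds only for $\ell\geq\frac{\theta'}{\theta'-\theta/2}\,\ell_0+O(1)$, a threshold strictly larger than $\ell_0$ by a fixed factor $\lambda>1$. Iterating, the admissible range after $n-n_0$ steps starts at $\approx\lambda^{\,n-n_0}\ell_0$, i.e.\ the threshold grows geometrically with $n$ rather than staying uniform. The exponent boost $\theta/2\to\theta b/2$ from $b\geq2$ does not repair this, because the obstruction lives in the boundary layer $\ell\in[\ell_0,\lambda\ell_0]$ where the offending term is a constant, not a small exponential. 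Any repair seems to require iterating the recursion a number of steps growing with $\ell$ (of order $\log\ell$, so that the constant gets raised to the power $b^{t}\gg\ell$) together with an a priori input such as $\P(M_N\leq m_N-y^*)\leq 1/2$ --- which is precisely the paper's argument. (A separate, minor point: the interval $(\theta/2,K)$ for $\theta'$ may be empty if $\theta\geq 2K$; one should simply take $\theta'=\theta$, using the assumed finiteness of $\E[e^{-\theta X}]$.)
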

\begin{proof}
Take some intermediate time $t_n=t(\log \ell_n)=o(n)$ where $t>0$ will be chosen later and let $B_n:=[-(1-\varepsilon)\ell_n,\infty)$ with any small $\varepsilon\in(0, 1)$. Observe that as $Z_{t_n}\geq b^{t_n}$,
\begin{align}\label{upperbdweibull1}
\P(M_n\leq m_n-\ell_n)\leq & \P(Z_{t_n}(B_n)\geq b^{t_n}; M_n\leq m_n-\ell_n)+\P(Z_{t_n}(B_n)<b^{t_n})\nonumber\\
\leq & \P(Z_{t_n}(B_n)\geq b^{t_n}; M_n\leq m_n-\ell_n)+\P(Z_{t_n}(B_n^c)\geq 1).
\end{align}
On the one hand, one sees that for $n$ large enough so that $m_n -\varepsilon \ell_n\leq m_{n-t_n}-y^*$,
\begin{align*}
\P(Z_{t_n}(B_n)\geq b^{t_n}; M_n\leq m_n-\ell_n)\leq & \P\l(Z_{t_n}(B_n)\geq b^{t_n}; \max_{|u|=t_n, S_u\in B_n}(S_u+M_{n-t_n}^u)\leq m_n-\ell_n\r)\\
\leq & \P\l(Z_{t_n}(B_n)\geq b^{t_n}; \max_{|u|=t_n, S_u\in B_n}(M_{n-t_n}^u)\leq m_{n}-\varepsilon \ell_n\r)\\
\leq & \P\l(Z_{t_n}(B_n)\geq b^{t_n}; \max_{|u|=t_n, S_u\in B_n}(M_{n-t_n}^u)\leq m_{n-t_n}-y^*\r).
\end{align*}
By Markov property at time $t_n$, all $M^u_{n-t_n}$ are i.i.d. copies of $M_{n-t_n}$ for $|u|=t_n$, and independent of $(S_u, |u|=t_n)$. This yields that
\begin{equation}\label{upperbd1part1}
\P(Z_{t_n}(B_n)\geq b^{t_n}; M_n\leq m_n-\ell_n)\leq  \P\l(M_{n-t_n}\leq m_{n-t_n}-y^*\r)^{b^{t_n}}\leq (1/2)^{b^{t_n}}.
\end{equation}
On the other hand, by Markov property,
\begin{align*}
\P(Z_{t_n}(B_n^c)\geq 1)\leq & \E[Z_{t_n}(B_n^c)]=\E\l[\sum_{|u|=t_n}1_{\{S_u< -(1-\varepsilon)\ell_n\}}\r]\\
=&m^{t_n}\P(S_{t_n}<-(1-\varepsilon)\ell_n)\\
=&m^{t_n}\P(e^{-\theta S_{t_n}}>e^{\theta(1-\varepsilon)\ell_n}),
\end{align*}
where $\theta>0$ such that $\E[e^{-\theta X}]<\infty$.  Again by Markov property, one gets that
\begin{align}\label{upperbd1part2}
\P(Z_{t_n}(B_n^c)\geq 1)\leq &m^{t_n}e^{-\theta(1-\varepsilon)\ell_n}\E[e^{-\theta S_{t_n}}]\nonumber\\
=&m^{t_n}e^{-\theta(1-\varepsilon)\ell_n} \E[e^{-\theta X}]^{t_n}\leq e^{-\theta(1-\varepsilon)\ell_n+\Theta( t_n)}.
\end{align}
Going back to \eqref{upperbdweibull1}, by \eqref{upperbd1part1} and \eqref{upperbd1part2}, one concludes that
\[
\P(M_n\leq m_n-\ell_n)\leq e^{-cb^{t_n}}+ e^{-\theta(1-\varepsilon)\ell_n+c t_n}.
\]
Here we choose $t= 2/\log b$ so that $b^{t_n}=\ell_n^2\gg \theta \ell_n\gg t_n$. Consequently, for arbitrary small $\varepsilon>0$, and for sufficiently large $n$,
\begin{equation*}
 \P(M_n\leq m_n-\ell_n)\leq e^{-\theta(1-\varepsilon)\ell_n}.
\end{equation*}
\end{proof}

\paragraph{The case $\alpha=1$}
This case is relatively simple. Take some intermediate time $t_n=t\log \ell_n$ where $t>0$ will be chosen later. Recall that $B_n=[-(1-\varepsilon)\ell_n,\infty)$ with arbitrary small $\varepsilon\in(0,1)$. Observe that for any $\delta\in(0, 1/b)$,
\begin{align}\label{upbd20}
&\P(M_n\leq m_n-\ell_n)\leq  \P(Z_{t_n}(B_n)\geq \delta b^{t_n}; M_n\leq m_n-\ell_n)+\P(Z_{t_n}(B_n)<\delta b^{t_n})\nonumber\\
\leq & \P\l(Z_{t_n}(B_n)\geq \delta b^{t_n}; \max_{|u|=t_n, S_u\in B_n}(S_u+M^u_{n-t_n})\leq m_n-\ell_n\r)+\P(Z_{t_n}(B_n)<\delta b^{t_n}).
\end{align}
On the one hand, one sees that for $n$ large enough so that $m_n-\ell_n+(1-\varepsilon)\ell_n\leq m_{n-t_n}-\varepsilon\ell_n/2$,
\begin{multline*}
\P\l(Z_{t_n}(B_n)\geq \delta b^{t_n}; \max_{|u|=t_n, S_u\in B_n}(S_u+M^u_{n-t_n})\leq m_n-\ell_n\r)\\
\leq \P\l(Z_{t_n}(B_n)\geq \delta b^{t_n}; \max_{|u|=t_n, S_u\in B_n}(M^u_{n-t_n})\leq m_{n-t_n}-\varepsilon\ell_n/2\r).
\end{multline*}
By Markov property at time $t_n$, all $M^u_{n-t_n}$, $|u|=t_n$ are i.i.d. copies of $M_{n-t_n}$, and independent of $Z_{t_n}(\cdot)$. This yields that
\begin{align}\label{upbd20part1}
\P\l(Z_{t_n}(B_n)\geq \delta b^{t_n}; \max_{|u|=t_n, S_u\in B_n}(M^u_{n-t_n})\leq m_{n-t_n}-\varepsilon\ell_n/2\r)\leq &\P(M_{n-t_n}\leq m_{n-t_n}-\varepsilon\ell_n/2)^{\delta b^{t_n}}\nonumber\\
\leq & e^{-\lambda \varepsilon\ell_n/8 \times \delta b^{t_n}},
\end{align}
where the last inequality follows from \eqref{roughupbd1}.

On the other hand, since $\delta<1/b$, the event $Z_{t_n}(B_n)<\delta b^{t_n}$ implies that for any $|v|=1$, $\{|u|=t_n: u>v\}\not\subset\{|u|=t_n, S_u\in B_n\}$. This means that
\begin{align*}
\P(Z_{t_n}(B_n)<\delta b^{t_n})\leq & \P\l(\cap_{|v|=1}\cup_{|u|=t_n, u>v}\{S_u\in B_n^c\}\r)\nonumber\\
\leq &\E\l[\P\l(\cup_{|u|=t_n, u>v}\{S_u\in B_n^c\}\r)^{Z_1}\r] \nonumber\\
\leq & \E\l[\l(\E\l(\sum_{|u|=t_n, u>v}1_{\{S_u\in B_n^c\}}\Big\vert |v|=1\r)\r)^{b}\r],
\end{align*}
where the last inequality follows from the fact that $Z_1\geq b$ and  Markov inequality. By independence between offsprings and motions, this leads to
\begin{align*}
\P(Z_{t_n}(B_n)<\delta b^{t_n})\leq & \l(\E\l[Z_{t_n-1}\r]\P\{S_{t_n}\in B_n^c\}\r)^{b}\\
\leq & \l(m^{t_n-1}\P\{S_{t_n}\leq -(1-\varepsilon)\ell_n\}\r)^{b}\\
\leq & m^{b(t_n-1)} \l(e^{-\theta(1-\ez) \ell_n} \E[e^{-\theta S_{t_n}}]\r)^b,
\end{align*}
where the last inequality holds by Markov inequality for any $\theta\in(0,\lambda)$. We hence end up with
\begin{equation}\label{upbd20part2}
\P(Z_{t_n}(B_n)<\delta b^{t_n})\leq m^{b(t_n-1)} e^{-\theta b(1-\ez)\ell_n} \E[e^{-\theta X}]^{bt_n}=e^{-\theta b(1-\ez)\ell_n+\Theta(t_n)},
\end{equation}
for any $\theta\in(0,\lambda)$. In view of \eqref{upbd20}, \eqref{upbd20part1} and \eqref{upbd20part2}, one obtains that for any $\varepsilon\in(0, 1)$,
\[
\P(M_n\leq m_n-\ell_n)\leq e^{-\lambda \varepsilon\ell_n/8 \times \delta b^{t_n}}+e^{-\lambda(1-\varepsilon) b\ell_n+\Theta(t_n)}.
\]
For any choice of  $t_n=\Theta(\log \ell_n)$ so that $b^{t_n}\gg 1$, we could conclude that for arbitrary small $\varepsilon>0$,
\[
\limsup_{n\rightarrow\infty}\frac{1}{\ell_n}\log \P(M_n\leq m_n-\ell_n)\leq - \lambda(1-\varepsilon) b.
\]
\paragraph{The case $\alpha>1$}

We are going to use Lemma \eqref{rupbd1}. Note that $\E[e^{-\theta X}]<\infty$ for any $\theta>0$ because $\alpha>1$. It brings out that for all $n$ large enough,
\begin{equation}\label{roughupbd2}
\P(M_n\leq m_n-\ell_n)\leq e^{-2\ell_n}.
\end{equation}

We still use some intermediate time $t_n^+=t^+\log \ell_n$ which will be determined later. The rouge idea is similar to what we used above. Recall that $B_n=[-(1-\varepsilon)\ell_n,\infty)$ with $\varepsilon\in(0,1)$. Observe that for $\delta_n:=\delta \log \ell_n$ with some $\delta\in(0,t^+)$,
\begin{equation}\label{upbd2}
\P(M_n\leq m_n-\ell_n)\leq  \P(Z_{t_n^+}(B_n)\geq b^{t_n^+-\delta_n}; M_n\leq m_n-\ell_n)+\P(Z_{t_n^+}(B_n)< b^{t_n^+-\delta_n}).
\end{equation}
Similarly to \eqref{upperbd1part1}, by Markov property at time $t_n^+$, one has
\begin{align*}
\P(Z_{t_n^+}(B_n)\geq b^{t_n^+-\delta_n}; M_n\leq m_n-\ell_n) \leq &\P\l(M_{n-t_n^+}\leq m_n-\varepsilon\ell_n\r)^{ b^{t_n^+-\delta_n}}\\
\leq & \P\l(M_{n-t_n^+}\leq m_{n-t_n^+}-\varepsilon\ell_n/2\r)^{ b^{t_n^+-\delta_n}}.
\end{align*}
By use of the rough upper bound \eqref{roughupbd2}, we get that
\begin{equation}\label{upbd2part1}
\P(Z_{t_n^+}(B_n)\geq b^{t_n^+-\delta_n}; M_n\leq m_n-\ell_n) \leq e^{-\varepsilon \ell_n b^{t_n^+-\delta_n}}.
\end{equation}
It remains to bound $\P(Z_{t_n^+}(B_n)<b^{t_n^+-\delta_n})$. Let $\mathbf{t}$ denote a fixed tree of $t_n^+$ generations and $\P^\mathbf{t}(\cdot)$ denote the conditional probability $\P(\cdot \vert {\cal T}_{t_n^+}=\mathbf{t})$ where ${\cal T}_{t_n^+}$ denotes the genealogical tree $\cal T$ up to the $t_n^+$-th generation. Observe that
\begin{equation}\label{keyupbd}
\P(Z_{t_n^+}(B_n)<b^{t_n^+-\delta_n})= \sum_{\mathbf{t}}\P( {\cal T}_{t_n^+}=\mathbf{t})\P^\mathbf{t}(Z_{t_n^+}(B_n)\leq b^{t_n^+-\delta_n}).
\end{equation}
Here for convenience, we replace each displacement $X_u$ by $X_u^+:=(-X_u)\vee M$ for some large and fixed constant $M\geq1$. Now denote the new positions achieved by these new displacements by
\[
S_u^+:=\sum_{\rho\prec v\preceq u}X_v^+, \qquad\forall |u|\leq t_n^+.
\]
Obviously, $S_u^+\geq \sum_{\rho\prec v\preceq u}(-X_v)=-S_u$. So, if $Z_{t_n^+}(B_n)\leq b^{t_n^+-\delta_n}$, then
\[
\sum_{|u|=t_n^+} 1_{\{S_u^+\leq (1-\varepsilon)\ell_n\}}\leq \sum_{|u|=t_n^+}1_{\{S_u\in B_n\}}=Z_{t_n^+}(B_n)\leq b^{t_n^+-\delta_n}.
\]
Therefore, for $\varepsilon\in(0,1/2)$ and for $n$ sufficiently large so that $t_n^+=t^+\log \ell_n\leq \varepsilon\ell_n$,
\begin{align*}
\P^\mathbf{t}(Z_{t_n^+}(B_n)\leq b^{t_n^+-\delta_n})\leq &\P^{\mathbf{t}}\l(\sum_{|u|=t_n^+} 1_{\{S_u^+\leq (1-\varepsilon)\ell_n\}}\leq b^{t_n^+-\delta_n}\r)\\
\leq &\sum_{x_u\in\mbb N \cap[M,\infty); u\in\mathbf{t}} \prod_{u\in\mathbf{t}}\P(X_u^+\in [x_u,x_u+1)) 1_{\{\sum_{|u|=t_n^+} 1_{\{s_u\leq (1-2\varepsilon)\ell_n\}}\leq b^{t_n^+-\delta_n}\}},
\end{align*}
where $s_u:=\sum_{\rho\prec v\preceq u}x_v$. We regard $\{x_u, u\in\mathbf{t}\}$ as a marked tree. Here by manipulating the order of $u\in\mathbf{t}$, we could construct a new marked tree $\{x_u, u\in\mathbf{t}_*\}$, where the lexicographical orders of individuals are totally rearranged so that the most recent common ancestor $u^*$ of individuals located below $(1-2\varepsilon)\ell_n$ at the $t_n^+$-th generation is of the generation $s_n$ with $t_n^+\geq s_n\geq \delta_n$. However, $\mathbf{t}_*$ and $\mathbf{t}$, viewed as sets of individuals, contain exactly the same individuals. The detailed construction will be explained later.

Now we cut this $u^*$ and remove all its descendants from $\mathbf{t}_*$ to get a pruned tree $\mathbf{t}_*^{\backslash u^*}$. Note that all individuals of this tree $\mathbf{t}_*^{\backslash u^*}$ up to the generation $t_n^+-1$ have at least $b$ children,  except the parent of $u^*$ . And the parent of $u^*$ has at least $b-1$ children. So we can extract from $\mathbf{t}_*^{\backslash u^*}$ an "almost" $b$-ary regular tree $\mathbf{t}_b^{\backslash u^*}$ so that its all descendants are located above $(1-2\varepsilon)\ell_n$. Here in $\mathbf{t}_b^{\backslash u^*}$, the parent of $u^*$ has $b-1$ children, and all others except the leaves have exactly $b$ children.

This operation leads to the following estimation, for any fixed tree $\mathbf{t}$ such that $\P( {\cal T}_{t_n^+}=\mathbf{t})>0$,
\begin{align}\label{upbd2part2}
\P^\mathbf{t}(Z_{t_n^+}(B_n)\leq b^{t_n^+-\delta_n})\leq&  \sum_{x_u\in\mbb N \cap[M,\infty);u\in\mathbf{t}_*}\prod_{u\notin\mathbf{t}_b^{\backslash u^*}}\P(X_u^+\in [x_u,x_u+1)) \nonumber\\
&\qquad\qquad\times\prod_{u\in\mathbf{t}_b^{\backslash u^*}}\P(X_u^+\in [x_u,x_u+1)) 1_{\{s_u\geq (1-2\varepsilon)\ell_n; \forall u\in \mathbf{t}_b^{\backslash u^*} s.t. |u|=t_n^+\}}\nonumber\\
\leq &\sum_{x_u\in\mbb N \cap[M,\infty); u\in\mathbf{t}_b^{\backslash u^*}} \prod_{u\in\mathbf{t}_b^{\backslash u^*}}\P(X_u^+\in [x_u,x_u+1)) 1_{\{s_u\geq (1-2\varepsilon)\ell_n; \forall u\in \mathbf{t}_b^{\backslash u^*} s.t. |u|=t_n^+\}}\nonumber\\
\leq &\Sigma_{\mathbf{t}_b^{\backslash u^*}, A}+ \P^{\mathbf{t}_b^{\backslash u^*}}(\exists u\in \mathbf{t}_b^{\backslash u^*}\textrm{ such that } X_u^+\geq A\ell_n),
\end{align}
where
\[
\Sigma_{\mathbf{t}_b^{\backslash u^*}, A}:=\sum_{x_u\in\mbb N \cap[M,A\ell_n); u\in\mathbf{t}_b^{\backslash u^*}} \prod_{u\in\mathbf{t}_b^{\backslash u^*}}\P(X_u^+\in [x_u,x_u+1)) 1_{\{s_u\geq (1-2\varepsilon)\ell_n; \forall u\in \mathbf{t}_b^{\backslash u^*} s.t. |u|=t_n^+\}}.
\]
 As the total progeny of $\mathbf{t}_b^{\backslash u^*}$ less than $ \sum_{k=1}^{t_n^+}b^k$,
\begin{align}\label{upbd2part2small}
\P^{\mathbf{t}_b^{\backslash u^*}}(\exists u\in \mathbf{t}_b^{\backslash u^*}\textrm{ such that } X_u^+\geq A\ell_n)\leq &\l(\sum_{k=1}^{t_n^+}b^k\r)\P(X^+\geq A\ell_n)\nonumber\\
\leq & C b^{t_n^++1}e^{-\lambda (A\ell_n)^\alpha},
\end{align}
where the last inequality follows from \eqref{supexptail}.
On the other hand, observe that
\begin{align*}
&\Sigma_{\mathbf{t}_b^{\backslash u^*}, A}\cr&\leq  (CA\ell_n)^{b^{t_n^++1}} \max_{\forall u\in\mathbf{t}_b^{\backslash u^*};x_u\in\mbb N \cap[M,A\ell_n);}\exp\l\{-\lambda\sum_{u\in\mathbf{t}_b^{\backslash u^*}}x_u^\alpha\r\}1_{\{s_u\geq (1-2\varepsilon)\ell_n; \forall u\in \mathbf{t}_b^{\backslash u^*} s.t. |u|=t_n^+\}}.
\end{align*}
Here we claim that
\begin{multline}\label{upbdSigma1}
\max_{x_u\in\mbb N \cap[M,A\ell_n); \forall u\in\mathbf{t}_b^{\backslash u^*}}\exp\l\{-\lambda\sum_{u\in\mathbf{t}_b^{\backslash u^*}}x_u^\alpha\r\}1_{\{s_u\geq (1-2\varepsilon)\ell_n; \forall u\in \mathbf{t}_b^{\backslash u^*} s.t. |u|=t_n^+\}}\\
\leq \exp\l\{-\lambda (b_\alpha-1)^{\alpha-1} (1-b^{-s_n})^{\alpha+1} (1-2\varepsilon)^\alpha \ell_n^\alpha\r\},
\end{multline}
with $b_\alpha=b^{\frac{1}{\alpha-1}}$.
The proof of \eqref{upbdSigma1} will be postponed to the end of this section. Let us admit it now so that
\begin{equation}\label{upbd2part2main}
\Sigma_{\mathbf{t}_b^{\backslash u^*}, A}\leq  (CA\ell_n)^{b^{t_n^++1}} \exp\l\{-\lambda (b_\alpha-1)^{\alpha-1} (1-b^{-s_n})^{\alpha+1} (1-2\varepsilon)^\alpha \ell_n^\alpha\r\}.
\end{equation}
Plugging \eqref{upbd2part2small} and \eqref{upbd2part2main} into \eqref{upbd2part2} yields that
\begin{multline*}
\P^\mathbf{t}(Z_{t_n^+}(B_n)\leq b^{t_n^+-\delta_n})\cr
\leq C b^{t_n^++1}e^{-\lambda (A\ell_n)^\alpha}+(CA\ell_n)^{b^{t_n^++1}} \exp\l\{-\lambda (b_\alpha-1)^{\alpha-1} (1-b^{-s_n})^{\alpha+1} (1-2\varepsilon)^\alpha \ell_n^{\alpha}\r\}.
\end{multline*}
Plugging it into \eqref{keyupbd} brings out that
\begin{multline}\label{keybd}
\P(Z_{t_n^+}(B_n)\leq b^{t_n^+-\delta_n})\\
\leq C b^{t_n^++1}e^{-\lambda (A\ell_n)^\alpha}+(CA\ell_n)^{b^{t_n^++1}} \exp\l\{-\lambda (b_\alpha-1)^{\alpha-1} (1-b^{-s_n})^{\alpha+1} (1-2\varepsilon)^\alpha \ell_n^{\alpha}\r\}.
\end{multline}
\eqref{keybd}, combined with \eqref{upbd2} and \eqref{upbd2part1}, implies that
\begin{multline*}
\P(M_n\leq m_n-\ell_n) \\
\leq e^{-\varepsilon \ell_n b^{t_n^+-\delta_n}} + C e^{-\lambda (A\ell_n)^\alpha+\Theta(\log \ell_n)}+e^{-\lambda (b_\alpha-1)^{\alpha-1} (1-b^{-s_n})^{\alpha+1} (1-2\varepsilon)^\alpha \ell_n^{\alpha}+ \Theta (b^{t_n^+}\log (A \ell_n))},
\end{multline*}
with $t_n^+=t^+\log \ell_n$, $\delta_n=\delta \log \ell_n$ and $s_n\geq \delta_n$. We choose here a large and fixed $A\geq1$, $t^+=\frac{3\alpha-1}{3\log b}$ and $\delta=\frac{1}{3\log b}$ so that
\[
\ell_n b^{t_n^+-\delta_n}=\ell_n^{\alpha+1/3},\quad A^\alpha \geq 2 (b_\alpha-1)^{\alpha-1} \textrm{ and }  b^{t_n^+}\log (A \ell_n)=o(\ell_n^{\alpha}).
\]
Consequently, letting $n\uparrow\infty$ and then $\varepsilon\downarrow\infty$ shows that
\[
\limsup_{n\rightarrow\infty}\frac{1}{\ell_n^\alpha}\log \P(M_n\leq m_n-\ell_n)\leq -\lambda (b_\alpha-1)^{\alpha-1},
\]
which is what we need.

To complete our proof, let us explain the construction of $\mathbf{t}_b^{\backslash u^*}$ here.

\paragraph{Construction of $\mathbf{t}_b^{\backslash u^*}$.}

For a deterministic sample of the branching random walk up to the generation $t_n^+$, saying $\{s_u: u\in {\bf t}\}$, we construct ${\bf t}^*$ and ${\bf t}_b^{\backslash u^*}$ in the following way. Let $\ell:=(1-2\varepsilon)\ell_n$ and we shall colour the individuals in the backwardly.

At the $t_n^+$-th generation, there are at most $b^{t_n^+-\delta_n}$ individuals positioned below $\ell$, which are all coloured blue.  The other individuals above $\ell$ are coloured red.

At the $(t_n^+-1)$-th generation, the individuals are called $u_{(1)},\, u_{(2)},\cdots,\, u_{(|{\bf t}|_{t_n^+-1})}$ according to their positions such that $s_{u_{(1)}}\geq s_{u_{(2)}}\geq \cdots\geq s_{u_{(|{\bf t}|_{t_n^+-1})}},$ where $|{\bf t}|_{t_n^+-1}=\#\{u\in {\bf t}: |u|=t_n^+-1\}$. Let us start with $u_{(1)}$ and its children. If all children of $u_{(1)}$ are red, then we turn to $u_{(2)}$. Otherwise, we keep its red children and replace its blue children by the red children of other individuals of the  $(t_n^+-1)$-th generation. More precisely, saying that there are $b_1$ blue children of $u_{(1)}$, we collect the red children of $u_{(2)}$ and then the red children of $u_{(3)},\, \cdots$, until we find exactly $b_1$ red ones to be exchanged with the original $b_1$ blue children of $u_{(1)}$.

 \begin{figure}

\includegraphics[width=15cm]  {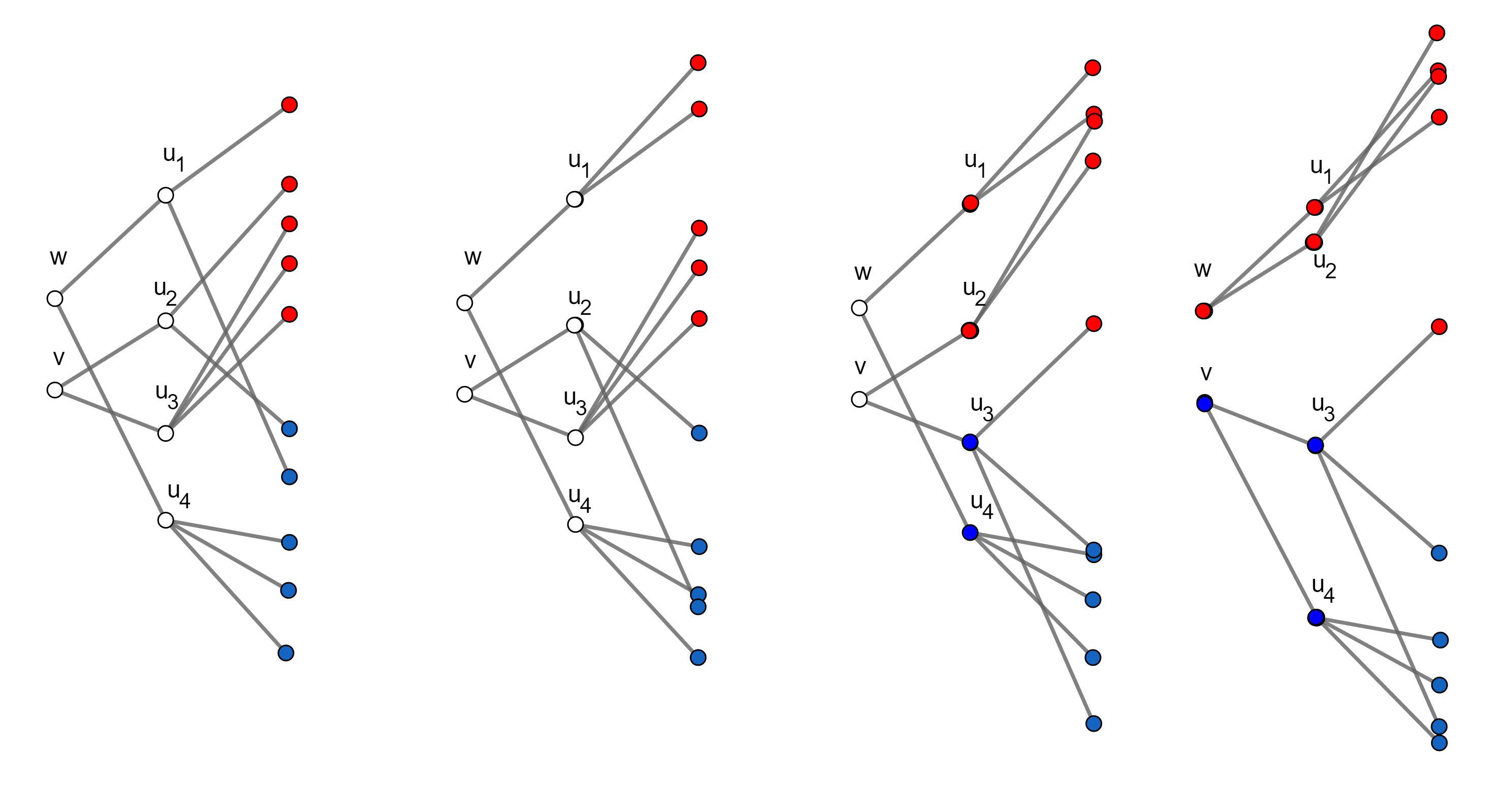}

 \caption{We first exchange $u_1$'s blue child with $u_2$'s red child; then we exchange $u_2$'s blue children with two of $u_3$'s red children. So we color $u_1$ and $u_2$  red and color $u_3$ and $u_4$ blue (Notice that one of $u_3$'s children is red.) Next, we exchange $u_2$ and its subtree with $u_4$ and its subtree. Then $w$ is colored red and $v$ is colored blue. }
 \end{figure}

 When we exchange two individuals $w$ and $v$, we exchange two subtrees rooted at $w$ and $v$, as well as their displacements; see Figure 2. Therefore, the positions of red individuals get higher, and obviously stay above $\ell$.

\begin{figure}
 \center{\includegraphics[width=10cm]  {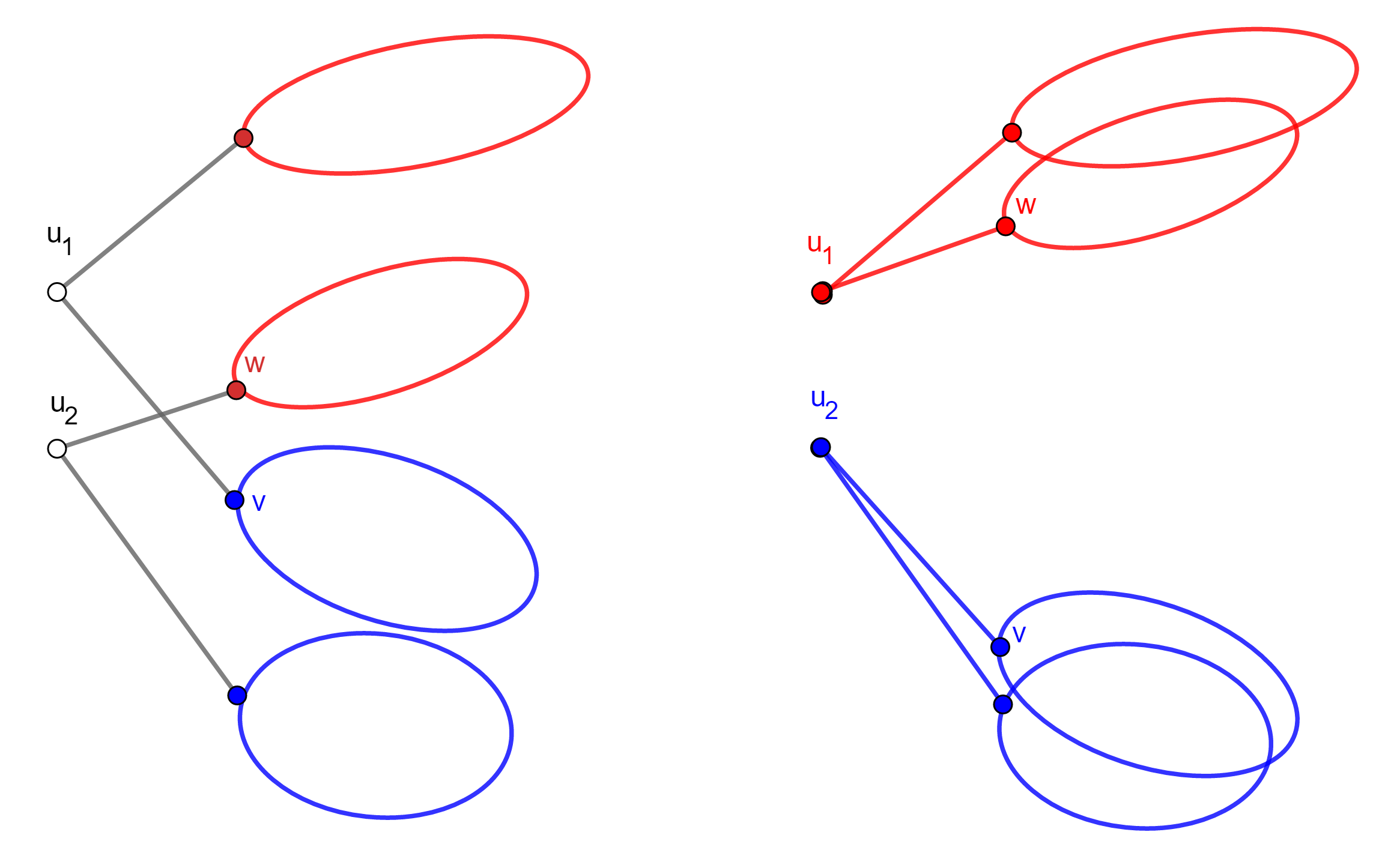}}
 \caption{ Both $u_1$ and $u_2$ have two offsprings. After exchanging subtrees rooted at $w$ and $v$, $u_1$ is colored red and  $u_2$ is colored blue.}
 \end{figure}

Note that in this way the number of children $u_{(1)}$ is unchanged and that all of them are positioned above $\ell$ and red.  Now, we put $u_{(1)}$ aside and restart from $u_{(2)}$ by doing the same exchanges with $u_{(3)}, u_{(4)}, \cdots$. We would stop at some $u_{(k)}$ such that there is no red child left for $u_{(k+1)}, \cdots.$ At this stage, there are at most 3 types of individuals at the $(t_n^+-1)$-th generation: the ones with only red children; the ones with only blue children and the one with red children and blue children (Note that there is at most one individual who has both red and blue children).
Then the individuals with only red children are all coloured red. The others of the $(t_n^+-1)$-th generation are coloured blue. Notice that the number of blue individuals of the $(t_n^+-1)$-th generation are at most $ b^{t_n^+-\delta_n-1}$.

By iteration, we exchange individuals and colour the tree from one generation to the previous generation. Finally, we stop at some generation $s_n$ where only one individual is coloured blue for the first time. We hence obtain the new tree $\mathbf{t}^*$ and find that the ancestor $u^*$ of blue ones is of generation $s_n\geq \delta_n$. Observe that, for all red individuals,  their descendants at $t_n^+$-th generation are positioned above $\ell$. 

\paragraph{Proof of \eqref{upbdSigma1}.}

We shall find a suitable lower bound for $\sum_{u\in\mathbf{t}_b^{\backslash u^*}}x_u^\alpha$ given that $(x_u; u\in\mathbf{t}_b^{\backslash u^*})\in\{s_u\geq (1-2\varepsilon)\ell_n; \forall u\in \mathbf{t}_b^{\backslash u^*} s.t. |u|=t_n^+\}\cap\{x_u\in\mbb N \cap[M,A\ell_n); \forall u\in\mathbf{t}_b^{\backslash u^*}\}$. Let us first consider the restrictions for $x_u$. Note that if $(x_u; u\in\mathbf{t}_b^{\backslash u^*})\in\{s_u\geq (1-2\varepsilon)\ell_n; \forall u\in \mathbf{t}_b^{\backslash u^*} s.t. |u|=t_n^+\}$, then
\begin{equation}\label{cond101}
\sum_{|u|=t_n^+, u\in\mathbf{t}_b^{\backslash u^*}}s_u\geq |\mathbf{t}_b^{\backslash u^*}|_{t_n^+} (1-2\varepsilon)\ell_n,
\end{equation}
where $|\mathbf{t}_b^{\backslash u^*}|_k:=\sum_{|u|=t_n^+}1_{\{u\in \mathbf{t}_b^{\backslash u^*}\}}$ denotes the population of the $k$-th generation of $\mathbf{t}_b^{\backslash u^*}$. We further observe that
\[
\sum_{|u|=t_n^+, u\in \mathbf{t}_b^{\backslash u^*}}s_u=\sum_{|u|=t_n^+}\sum_{\rho\prec v\preceq u}x_v=\sum_{k=1}^{t_n^+} \sum_{|v|=k}\l(x_v\sum_{|u|=t_n^+}1_{\{v\preceq u\}}\r)
\]
where $\sum_{|u|=t_n^+}1_{\{v\preceq u\}}\leq b^{t_n^+-|v|}$ as $\mathbf{t}_b^{\backslash u^*}$ is a pruned $b$-ary tree. Therefore, \eqref{cond101} implies
\begin{equation}\label{restrictions}
\sum_{k=1}^{t_n^+} \sum_{|v|=k} x_v b^{t_n^+-k}\geq |\mathbf{t}_b^{\backslash u^*}|_{t_n^+}  (1-2\varepsilon)\ell_n.
\end{equation}
Recall that the generation of $u^*$ is $s_n$. So,
\begin{equation}\label{popbary}
|\mathbf{t}_b^{\backslash u^*}|_k= b^k, \forall 1\leq k\leq s_n-1;\textrm{ and } |\mathbf{t}_b^{\backslash u^*}|_k= b^{k}-b^{k-s_n}, \forall s_n\leq k\leq t_n^+.
\end{equation}
Let $\overline{x}_k:=\frac{\sum_{|v|=k, v\in \mathbf{t}_b^{\backslash u^*}}x_v}{|\mathbf{t}_b^{\backslash u^*}|_k}$ be the averaged displacement at the $k$-th generation. Then,
\[
\sum_{k=1}^{t_n^+} \sum_{|v|=k} x_v b^{t_n^+-k}= b^{t_n^+}\l(\sum_{k=1}^{s_n-1} \overline{x}_k + \frac{b^k-b^{k-s_n}}{b^k}\sum_{k=s_n}^{t_n^+} \overline{x}_k\r)\leq  b^{t_n^+}\sum_{k=1}^{t_n^+} \overline{x}_k.
\]
Thus, if \eqref{restrictions} holds, one has
\begin{equation}\label{cond102}
\sum_{k=1}^{t_n^+} \overline{x}_k\geq \frac{|\mathbf{t}_b^{\backslash u^*}|_{t_n^+}}{b^{t_n^+}}  (1-2\varepsilon)\ell_n=(1-b^{-s_n})(1-2\varepsilon)\ell_n.
\end{equation}
Hence, \eqref{cond101} implies \eqref{cond102}. This means that
\begin{equation}\label{cond}
\{s_u\geq (1-2\varepsilon)\ell_n; \forall u\in \mathbf{t}_b^{\backslash u^*} s.t. |u|=t_n^+\}\subset\l\{\sum_{k=1}^{t_n^+} \overline{x}_k\geq (1-b^{-s_n})(1-2\varepsilon)\ell_n\r\}.
\end{equation}
So it suffices to find a suitable lower bound of $\sum_{u\in\mathbf{t}_b^{\backslash u^*}}x_u^\alpha$ under the condition that
\begin{equation}\label{cond1}
\sum_{k=1}^{t_n^+} \overline{x}_k\geq (1-b^{-s_n})(1-2\varepsilon)\ell_n.
\end{equation}
 In fact, by convexity on $\mbb R_+$ of $x\mapsto x^\alpha$ for $\alpha>1$,
\[
\sum_{u\in\mathbf{t}_b^{\backslash u^*}}x_u^\alpha=\sum_{k=1}^{t_n^+} \sum_{|u|=t_n^+, u\in \mathbf{t}_b^{\backslash u^*}}x_u^\alpha\geq \sum_{k=1}^{t_n^+} |\mathbf{t}_b^{\backslash u^*}|_k (\overline{x}_k)^\alpha.
\]
Immediately it follows from \eqref{popbary} that
\begin{equation}\label{bdnotree}
\sum_{u\in\mathbf{t}_b^{\backslash u^*}}x_u^\alpha\geq (1-b^{-s_n})\sum_{k=1}^{t_n^+} b^k (\overline{x}_k)^\alpha.
\end{equation}
Let us take a positive sequence $(\mu_k)_{k\geq1}$, which will be determined later, with $\mu_\alpha:=\sum_{k=1}^{t_n^+}\mu_k^\alpha$ and write
\[
\sum_{k=1}^{t_n^+} b^k (\overline{x}_k)^\alpha =\mu_\alpha\sum_{k=1}^{t_n^+}\frac{\mu_k^\alpha}{\mu_\alpha} (\mu_k^{-1}b^{k/\alpha}\overline{x}_k)^\alpha
\]
which again by convexity implies that
\[
\sum_{k=1}^{t_n^+} b^k (\overline{x}_k)^\alpha \geq \mu_\alpha \l(\sum_{k=1}^{t_n^+}\frac{\mu_k^\alpha}{\mu_\alpha} \mu_k^{-1}b^{k/\alpha}\overline{x}_k\r)^\alpha=\mu_\alpha^{1-\alpha}\l(\sum_{k=1}^{t_n^+}\mu_k^{\alpha-1}b^{k/\alpha}\overline{x}_k\r)^\alpha.
\]
We choose $\mu_k=b^{-\frac{k}{\alpha(\alpha-1)}}$ so that $\mu_k^{\alpha-1}b^{k/\alpha}\overline{x}_k=\overline{x}_k$ for any $k\geq1$. Thus,
\[
\mu_\alpha=\sum_{k=1}^{t_n^+} b^{-\frac{k}{\alpha-1}}\leq\frac{1}{b^{\frac{1}{\alpha-1}}-1}
\]
and
\begin{equation}\label{cond2}
\sum_{k=1}^{t_n^+} b^k (\overline{x}_k)^\alpha \geq \l(\frac{1}{b^{\frac{1}{\alpha-1}}-1}\r)^{1-\alpha}\l(\sum_{k=1}^{t_n^+} \overline{x}_k\r)^\alpha \geq (b_\alpha-1)^{\alpha-1} (1-b^{-s_n})^\alpha (1-2\varepsilon)^\alpha \ell_n^{\alpha},
\end{equation}
where the last inequality follows from \eqref{cond1}. Plugging \eqref{cond2} into \eqref{bdnotree} shows that
\[
\sum_{u\in\mathbf{t}_b^{\backslash u^*}}x_u^\alpha\geq (1-b^{-s_n})(b_\alpha-1)^{\alpha-1} (1-b^{-s_n})^\alpha (1-2\varepsilon)^\alpha \ell_n^{\alpha}.
\]
This suffices to conclude \eqref{upbdSigma1}.

\subsection{Proof of Theorem \ref{gumbeltail}: step size of Gumbel tail}

The arguments for Gumbel tail are similar to that for Weibull tail.

\subsubsection{Lower bound of Theorem \ref{gumbeltail}}

We are going to demonstrate that
\[
\P(M_n\leq m_n-\ell_n)\geq \exp\{-e^{\beta(\alpha,b)\ell_n^{\frac{\alpha}{\alpha+1}}+o(\ell_n^{\frac{\alpha}{\alpha+1}})}\},
\]
where $\beta(\alpha,b):=\left(\frac{1+\alpha}{\alpha} \log b\right)^{\frac{\alpha}{\alpha+1}}$.

By the assumption of Theorem \ref{gumbeltail}, there exist two constants $0<c<C<\infty$ such that for any $x\geq 0$,
\begin{equation}\label{doublexptail}
ce^{-e^{ x^\alpha}}\leq \P(X<-x) \leq C e^{-e^{ x^\alpha}}.
\end{equation}
Note that here $\alpha>0$. Using the similar arguments as in Section \ref{lowerbdweibull}, we take some intermediate time $t_n^-=o(\ell_n)$ and a positive sequence $(a_k)_{1\leq k\leq t_n^-}$. Then, observe that
\begin{align*}
&\P(M\leq m_n-\ell_n)\geq  \P\l(Z_{t_n^-}=b^{t_n^-}; \forall |u|=k\in\{1,\cdots,t_n^-\}, X_u\leq -a_k; M_n\leq m_n-\ell_n \r)\\
\geq &  \P\l(Z_{t_n^-}=b^{t_n^-}; \forall |u|=k\in\{1,\cdots,t_n^-\}, X_u\leq -a_k\r)\P\l( M_{n-t_n^-}\leq m_n-\ell_n+\sum_{k=1}^{t_n^-}a_k \r)^{b^{t_n^-}}\\
=& p_b^{\sum_{k=0}^{t_n^--1}b^k}\prod_{k=1}^{t_n^-} \P(X<-a_k)^{b^k}\P\l( M_{n-t_n^-}\leq m_n-\ell_n+\sum_{k=1}^{t_n^-}a_k \r)^{b^{t_n^-}}.
\end{align*}
By \eqref{doublexptail}, one has
\begin{equation}\label{lowerbdgumbel}
\P(M\leq m_n-\ell_n)\geq p_b^{\frac{b^{t_n^-}-1}{b-1}} c^{\frac{b^{t_n^-+1}-b}{b-1}}\exp\l\{-\sum_{k=1}^{t_n^-}e^{a_k^\alpha}b^k\r\} \P\l( M_{n-t_n^-}\leq m_n-\ell_n+\sum_{k=1}^{t_n^-}a_k \r)^{b^{t_n^-}}.
\end{equation}
Here we take $t_n^-:=t^- \ell_n^{\frac{\alpha}{\alpha+1}}$ and $a_k:=(\log b)^{1/\alpha}( t_n^-+1-k)^{1/\alpha}$ with $t^-:=\l(\frac{1+\alpha}{\alpha}a\r)^{\frac{\alpha}{\alpha+1}}(\log b)^{-\frac{1}{\alpha+1}}$. Now observe that for arbitrary small $\varepsilon>0$ and $n$ large enough,
\begin{align*}
\sum_{k=1}^{t_n^-}a_k =&(\log b)^{1/\alpha} \sum_{k=1}^{t_n^-} ( t_n^-+1-k)^{1/\alpha}\geq   (\log b)^{1/\alpha} \int_1^{t_n^-} (t_n^-+1-s)^{1/\alpha}ds \\
= & \ell_n-\Theta(1)\geq \ell_n-(m_n-m_{n-t_n^-}-y^*).
\end{align*}
This leads to the fact that
\begin{align*}
 \P\l( M_{n-t_n^-}\leq m_n-\ell_n+\sum_{k=1}^{t_n^-}a_k \r)^{b^{t_n^-}}\geq & \P\l(M_{n-t_n^-}\leq m_{n-t_n^-}+y^*\r)^{b^{t_n^-}}\\
\geq & e^{-\Theta(b^{t_n^-})}.
\end{align*}
On the other hand, note that
\[
\sum_{k=1}^{t_n^-}e^{a_k^\alpha}b^k=\sum_{k=1}^{t_n^-}b^{t_n^-+1}=bt_n^-e^{(t\log b)\ell_n^{\frac{\alpha}{\alpha+1}}}.
\]
Going back to \eqref{lowerbdgumbel}, as $b^{t_n^-}\ll t_n^-e^{(t\log b)\ell_n^{\frac{\alpha}{\alpha+1}}}$ and $t_n^-=e^{o(\ell_n^{\frac{\alpha}{\alpha+1}})}$, one concludes that
\[
\P(M\leq m_n-\ell_n)\geq \exp\l\{- t_n^-e^{(t\log b)\ell_n^{\frac{\alpha}{\alpha+1}}}- \Theta(b^{t_n^-})\r\}=\exp\{-e^{\beta(\alpha,b)\ell_n^{\frac{\alpha}{\alpha+1}}+o(\ell_n^{\frac{\alpha}{\alpha+1}})}\},
\]
where $\beta(\alpha,b)=t\log b=\l(\frac{1+\alpha}{\alpha}\log b\r)^{\frac{\alpha}{\alpha+1}}$.
\subsubsection{Upper bound of Theorem \ref{gumbeltail}}

We first prove a rough upper bound.

\begin{lem}
Assume \eqref{momoffspring}, \eqref{expmom}, \eqref{hyp3}, \eqref{hyp4} and $b\geq2$. If $\P(X<-x)=\Theta(1)e^{-e^{x^\alpha}}$ as $x\rightarrow\infty$, then there exists $\eta_0>0$ such that for all $n$ large enough,
\begin{equation}\label{roughupbd3}
\P(M_n\leq m_n-\ell_n)\leq \exp(-e^{\eta_0 \ell_n^{\frac{\alpha}{\alpha+1}} }).
\end{equation}
\end{lem}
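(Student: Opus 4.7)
The plan is to mimic the two-term split used in Lemma 2.1, but with a cutoff time chosen so that the branching factor $b^{t_n}$ is already doubly exponentially large in $\ell_n^{\alpha/(\alpha+1)}$. Set $B_n := [-(1-\varepsilon)\ell_n,\infty)$ for some small $\varepsilon \in (0,1)$, and take
$$t_n := \lfloor \eta_1 \ell_n^{\alpha/(\alpha+1)} \rfloor,$$
with $\eta_1 > 0$ to be chosen at the end. Since $Z_{t_n} \geq b^{t_n}$, I decompose
$$\P(M_n \leq m_n - \ell_n) \leq \P(Z_{t_n}(B_n) \geq b^{t_n};\, M_n \leq m_n - \ell_n) + \P(Z_{t_n}(B_n^c) \geq 1),$$
exactly as in (2.8) and argue that each piece is at most $\exp(-e^{\eta_0 \ell_n^{\alpha/(\alpha+1)}})$ for a suitable $\eta_0 > 0$.

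For the first piece I apply the Markov property at time $t_n$. On $\{Z_{t_n}(B_n) \geq b^{t_n}\}$, the constraint $M_n \leq m_n - \ell_n$ forces every descendant $u$ with $S_u \in B_n$ to satisfy $M^u_{n-t_n} \leq m_n - \ell_n + (1-\varepsilon)\ell_n = m_n - \varepsilon \ell_n$. Since $t_n = o(\ell_n)$, one has $m_n - \varepsilon\ell_n \leq m_{n-t_n} - y^*$ for $n$ large enough, where $y^*$ is the constant from (2.4). The $M^u_{n-t_n}$ being i.i.d.\ copies of $M_{n-t_n}$, this bounds the first piece by
$$\P(M_{n-t_n} \leq m_{n-t_n} - y^*)^{b^{t_n}} \leq (1/2)^{b^{t_n}} = \exp\bigl(-(\log 2)\, e^{\eta_1 (\log b)\, \ell_n^{\alpha/(\alpha+1)}}\bigr),$$
which is at most $\exp(-e^{\eta_0 \ell_n^{\alpha/(\alpha+1)}})$ for any $\eta_0 < \eta_1 \log b$.

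For the second piece, rather than use a Chernoff bound (which is inefficient for such a super-exponential tail), I exploit the fact that in order for $S_{t_n}$ to fall below $-(1-\varepsilon)\ell_n$, at least one summand must be very negative. Setting $A := (1-\varepsilon)\ell_n/t_n = ((1-\varepsilon)/\eta_1)\,\ell_n^{1/(\alpha+1)}$, if every $X_i \geq -A$ for $i \leq t_n$ then $S_{t_n} \geq -At_n = -(1-\varepsilon)\ell_n$; hence by the union bound and the Gumbel upper tail estimate $\P(X \leq -A) \leq C e^{-e^{A^\alpha}}$,
$$\P(Z_{t_n}(B_n^c) \geq 1) \leq m^{t_n}\, \P(S_{t_n} \leq -(1-\varepsilon)\ell_n) \leq C\, m^{t_n}\, t_n\, \exp\bigl(-e^{((1-\varepsilon)/\eta_1)^{\alpha}\,\ell_n^{\alpha/(\alpha+1)}}\bigr).$$
The factor $m^{t_n}t_n$ is absorbed into the double exponent, so this is $\leq \exp(-e^{\eta_0 \ell_n^{\alpha/(\alpha+1)}})$ for any $\eta_0 < ((1-\varepsilon)/\eta_1)^{\alpha}$.

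Combining, one fixes any $\varepsilon \in (0,1)$ and $\eta_1 > 0$, then chooses $\eta_0$ strictly smaller than both $\eta_1 \log b$ and $((1-\varepsilon)/\eta_1)^{\alpha}$, which yields (\ref{roughupbd3}). The main point where care is needed is the one-big-jump bound for $\P(S_{t_n} \leq -(1-\varepsilon)\ell_n)$: it is crucial that $t_n$ is only a fractional power of $\ell_n$, so that the threshold $A$ is still large and the Gumbel tail $\exp(-e^{A^\alpha})$ survives as a double exponential in $\ell_n^{\alpha/(\alpha+1)}$. Everything else is bookkeeping inherited from the arguments already used for Theorems \ref{BDMD} and \ref{weibulltail}.
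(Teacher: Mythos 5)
Your proof is correct and follows essentially the same route as the paper's: the same cutoff $t_n=\Theta(\ell_n^{\alpha/(\alpha+1)})$, the same two-term decomposition with $B_n=[-(1-\varepsilon)\ell_n,\infty)$, the bound $(1/2)^{b^{t_n}}$ via \eqref{choiceofy}, and the same one-big-jump union bound for $\P(S_{t_n}\leq -(1-\varepsilon)\ell_n)$. The only cosmetic difference is that you balance the two exponents by taking $\eta_0$ below both, whereas the paper chooses $t$ small so that the second term is dominated by the first; both are fine.
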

\begin{proof}
Let $t_n:=t\ell_n^{\frac{\alpha}{\alpha+1}}$
with some $0<t<\infty$. Again, we use $B_n=[-(1-\varepsilon)\ell_n,\infty)$ with $\varepsilon\in(0, 1)$ and observe that by Markov property at time $t_n$,
\begin{align*}
&\P(M_n\leq m_n-\ell_n)\leq \P(Z_{t_n}(B_n)\geq b^{t_n}; M_n\leq m_n-\ell_n)+\P(Z_{t_n}(B_n^c)\geq 1)\\
\leq &\P\l(Z_{t_n}(B_n)\geq b^{t_n}; \max_{|u|=t_n, S_u\in B_n}(M_{n-t_n}^u)\leq m_{n}-\varepsilon \ell_n\r)+\P(Z_{t_n}(B_n^c)\geq 1)\\
\leq &\P\l(M_{n-t_n}\leq m_{n-t_n}-y^*\r)^{b^{t_n}}+\P(Z_{t_n}(B_n^c)\geq 1),
\end{align*}
for all sufficiently large $n$. Again, using $\P\l(M_{n-t_n}\leq m_{n-t_n}-y^*\r)\leq 1/2$ and Markov inequality, one has
\begin{align*}
\P(M_n\leq m_n-\ell_n)\leq& e^{-cb^{t_n}}+\P(Z_{t_n}(B_n^c)\geq 1)\\
\leq & e^{-cb^{t_n}}+ \E\l[\sum_{|u|=t_n}1_{S_u < -(1-\varepsilon)\ell_n}\r]\\
=&e^{-cb^{t_n}}+m^{t_n}\P(S_{t_n}<-(1-\varepsilon)\ell_n).
\end{align*}
Observe that $\{S_{t_n}<-(1-\varepsilon)\ell_n\}$ implies that at least one increment is less than $-(1-\varepsilon)\ell_n/t_n$. Therefore,
\begin{align*}
\P(M_n\leq m_n-\ell_n)\leq& e^{-cb^{t_n}}+m^{t_n} t_n \P(X\leq -(1-\varepsilon)\ell_n/t_n)\\
\leq & \exp(-ce^{t\log b \ell_n^{\frac{\alpha}{\alpha+1}} })+C t_n \exp(-e^{(\frac{1-\varepsilon}{t})^{\alpha} \ell_n^{\frac{\alpha}{\alpha+1}}}+t_n\log m),
\end{align*}
where we choose a small positive $t$ such that $t_n \exp(-e^{(\frac{1-\varepsilon}{t})^{\alpha} \ell_n^{\frac{\alpha}{\alpha+1}}}+t_n\log m)\ll \exp(-e^{t\log b \ell_n^{\frac{\alpha}{\alpha+1}} })$. As a result,  there exists $\eta_0>0$ such that for all $n$ large enough,
\begin{equation*}
\P(M_n\leq m_n-\ell_n)\leq \exp(-e^{\eta_0 \ell_n^{\frac{\alpha}{\alpha+1}} }).
\end{equation*}
\end{proof}
Now we are ready to prove the upper bound.
Let $t_n^+:=t^+\ell_n^{\frac{\alpha}{\alpha+1}}=o(\ell_n)$ and $\delta_n:=\delta \ell_n^{\frac{\alpha}{\alpha+1}}$
with some $0<\delta<t^+<\infty$. Using the similar arguments as in the Subsection \ref{upbdWeibull}, in view of  \eqref{upbd2} and to \eqref{upbd2part1}, one sees that for any $\varepsilon\in(0,1/2)$,
\begin{align*}
\P(M_n\leq m_n-\ell_n)\leq & \P(Z_{t_n^+}(B_n)\geq b^{t_n^+-\delta_n}; M_n\leq m_n-\ell_n)+\P(Z_{t_n^+}(B_n)< b^{t_n^+-\delta_n})\\
\leq & \P(M_{n-t_n^+}\leq m_{n-t_n^+}-\varepsilon \ell_n/2)^{b^{t_n^+-\delta_n}}+\P(Z_{t_n^+}(B_n)<b^{t_n^+-\delta_n}),
\end{align*}
which by \eqref{roughupbd3} is bounded by
\[
\exp\l(-e^{\eta_0 (\varepsilon\ell_n/2)^{\frac{\alpha}{\alpha+1}}}b^{t_n^+-\delta_n}\r)+\P(Z_{t_n^+}(B_n)<b^{t_n^+-\delta_n}).
\]
Similarly to \eqref{upbd2part2}, one also sees that
\begin{align}\label{upbd3}
\P(M_n\leq m_n-\ell_n)\leq & \exp\l(-e^{\eta_0(\varepsilon\ell_n/2)^{\frac{\alpha}{\alpha+1}}}b^{t_n^+-\delta_n}\r)+\P(Z_{t_n^+}(B_n)<b^{t_n^+-\delta_n})\nonumber\\
\leq &\exp\l(-e^{\eta_0 (n-t_n^+)^{\frac{\alpha\beta}{\alpha+1}}}b^{t_n^+-\delta_n}\r)+\Sigma_{\mathbf{t}_b^{\backslash u^*}, A}+ \P^{\mathbf{t}_b^{\backslash u^*}}(\exists |u|\leq t_n^+, X_u\geq A\ell_n),
\end{align}
where $\mathbf{t}_b^{\backslash u^*}$ is a $b$-ary regular tree pruned at some $u^*$ of generation $s_n\geq \delta_n$ and
\[
\Sigma_{\mathbf{t}_b^{\backslash u^*}, A}:=\sum_{x_u\in\mbb N \cap[M,A\ell_n); u\in\mathbf{t}_b^{\backslash u^*}} \prod_{u\in\mathbf{t}_b^{\backslash u^*}}\P(X_u^+\in [x_u,x_u+1)) 1_{\{s_u\geq (1-2\varepsilon)\ell_n; \forall u\in \mathbf{t}_b^{\backslash u^*} s.t. |u|=t_n^+\}}.
\]
On the one hand, by Markov inequality like \eqref{upbd2part2small}, for $A\geq1$ and $n$ sufficiently large,
\begin{align}\label{upbd3smallpart}
\P^{\mathbf{t}_b^{\backslash u^*}}(\exists |u|\leq t_n^+, X_u\geq A\ell_n)\leq &\sum_{k=1}^{t_n^+} \P(X\geq A\ell_n)\nonumber\\
\leq & C b^{t_n^+} e^{-e^{(A\ell_n)^\alpha}}=o_n(1)\P(M_n\leq m_n-\ell_n),
\end{align}
according to the lower bound obtained above. It remains to bound $\Sigma_{\mathbf{t}_b^{\backslash u^*}, A}$. In fact,
\[
\Sigma_{\mathbf{t}_b^{\backslash u^*}, A} \leq (CA\ell_n)^{b^{t_n^++1}} \max_{x_u\in\mbb N \cap[M,A\ell_n); \forall u\in\mathbf{t}_b^{\backslash u^*}}\exp\l\{-\sum_{u\in\mathbf{t}_b^{\backslash u^*}}e^{x_u^\alpha}\r\}1_{\{s_u\geq (1-2\varepsilon)\ell_n; \forall u\in \mathbf{t}_b^{\backslash u^*} s.t. |u|=t_n^+\}},
\]
where we need to bound from below
\begin{equation}\label{upbdSigma2}
\min\l\{\sum_{u\in\mathbf{t}_b^{\backslash u^*}}e^{x_u^\alpha}\Big\vert x_u\in\mbb N \cap[M,A\ell_n); \forall u\in\mathbf{t}_b^{\backslash u^*}; s_u\geq (1-2\varepsilon)\ell_n; \forall u\in \mathbf{t}_b^{\backslash u^*} s.t. |u|=t_n^+\r\}.
\end{equation}
Note that for any $\alpha>0$, there exists $M\geq1$ such that $x\mapsto e^{x^\alpha}$ is convex on $[M,\infty)$. Let us take such $M$ and observe that
\[
\sum_{u\in\mathbf{t}_b^{\backslash u^*}}e^{x_u^\alpha}=\sum_{k=1}^{t_n^+}|\mathbf{t}_b^{\backslash u^*}|_k \sum_{|u|=k}\frac{1}{|\mathbf{t}_b^{\backslash u^*}|_k}e^{x_u^\alpha}\geq \sum_{k=1}^{t_n^+}|\mathbf{t}_b^{\backslash u^*}|_ke^{ \overline{x}_k^\alpha},
\]
where $\overline{x}_k$ denotes the averaged displacements of the $k$-th generation. As $|\mathbf{t}_b^{\backslash u^*}|_k\geq (1-b^{-s_n}) b^{k}$ for any $1\leq k\leq t_n^+$, one gets that
\begin{equation}\label{bdbymax}
\sum_{u\in\mathbf{t}_b^{\backslash u^*}}e^{x_u^\alpha}\geq (1-b^{-s_n})\sum_{k=1}^{t_n^+} b^k e^{ \overline{x}_k^\alpha}\geq (1-b^{-s_n}) e^{\Xi_{t_n^+}},
\end{equation}
where
\[
\Xi_{t_n^+}:=\max_{1\leq k\leq t_n^+}\{ \overline{x}_k^\alpha+k\log b\}.
\]
 Recall \eqref{cond}. One only needs to bound $\Xi_{t_n^+}$ under the condition that $\sum_{k=1}^{t_n^+} \overline{x}_k\geq (1-b^{-s_n})(1-2\varepsilon)\ell_n$. By the definition of $\Xi_{t_n^+}$, one sees that
\[
\overline{x}_k\leq \l(\Xi_{t_n^+}-k\log b\r)^{1/\alpha}, \forall k\in\{1,\cdots, t_n^+\}.
\]
So, $\sum_{k=1}^{t_n^+} \overline{x}_k\geq (1-b^{-s_n})(1-2\varepsilon)\ell_n$ yields that
\[
\sum_{k=1}^{t_n^+}  \l(\Xi_{t_n^+}-k\log b\r)^{1/\alpha} \geq (1-b^{-s_n})(1-2\varepsilon)\ell_n.
\]
 Notice that $\Xi_{t_n^+}\geq t_n^+\log b$. By monotonicity of $x\mapsto (\Xi_{t_n^+}-x\log b)^{1/\alpha}$ on $[0,\frac{\Xi_{t_n^+}}{\log b}]$, one has
\[
\sum_{k=1}^{t_n^+}  \l(\Xi_{t_n^+}-k\log b\r)^{1/\alpha}\leq \int_0^{t_n^+} \l(\Xi_{t_n^+}-x\log b\r)^{1/\alpha}dx\leq  \frac{\alpha}{(1+\alpha)\log b} \Xi_{t_n^+}^{1+\frac{1}{\alpha}}.
\]
We then deduce that
\[
\Xi_{t_n^+}\geq \l( \frac{(\alpha+1)\log b}{\alpha} (1-b^{-s_n})(1-2\varepsilon)\ell_n\r)^{ \frac{\alpha}{(\alpha+1)} }.
\]
Going back to \eqref{bdbymax}, one sees that
\begin{multline}
\min\l\{\sum_{u\in\mathbf{t}_b^{\backslash u^*}}e^{x_u^\alpha}\Big\vert x_u\in\mbb N \cap[M,A\ell_n); \forall u\in\mathbf{t}_b^{\backslash u^*}; s_u\geq (1-2\varepsilon)\ell_n; \forall u\in \mathbf{t}_b^{\backslash u^*} s.t. |u|=t_n^+\r\}\\
\geq (1-b^{-s_n}) e^{\Xi_{t_n^+}} \geq (1-b^{-s_n})e^{\l(\frac{\alpha+1}{\alpha}(1-b^{-s_n})(1-2\varepsilon)\log b\r)^{\frac{\alpha}{\alpha+1}} \ell_n^{\frac{\alpha}{\alpha+1}}}.
\end{multline}
Using it to bound $\Sigma_{\mathbf{t}_b^{\backslash u^*}, A}$ tells us that
\[
\Sigma_{\mathbf{t}_b^{\backslash u^*}, A}\leq (CA \ell_n)^{b^{t_n^+}+1} \exp\{-(1-b^{-s_n})e^{\l(\frac{\alpha+1}{\alpha}(1-b^{-s_n})(1-2\varepsilon)\log b\r)^{\frac{\alpha}{\alpha+1}} \ell_n^{\frac{\alpha}{\alpha+1}}}\}.
\]
Plugging it and \eqref{upbd3smallpart} into \eqref{upbd3} implies that
\begin{multline}
\P(M_n\leq m_n-\ell_n)\leq \exp\l(-e^{\eta_0 (\varepsilon\ell_n/2)^{\frac{\alpha}{\alpha+1}}}b^{t_n^+-\delta_n}\r)+o_n(1)\P(M_n\leq m_n-\ell_n)\\
+(CA \ell_n)^{b^{t_n^+}+1} \exp\{-(1+o_n(1))e^{\l(\frac{\alpha+1}{\alpha}(1-2\varepsilon)\log b\r)^{\frac{\alpha}{\alpha+1}} \ell_n^{\frac{\alpha}{\alpha+1}}}\}.
\end{multline}
Here we choose $t^+=[\l(\frac{\alpha+1}{\alpha}(1-2\varepsilon)\log b\r)^{\frac{\alpha}{\alpha+1}}-\eta_\varepsilon/6]/\log b$ and $\delta=\frac{\eta_\varepsilon}{6\log b}$ where $\eta_\varepsilon=\eta_0 (\frac{\varepsilon}{2})^{\frac{\alpha}{\alpha+1}}$ so that
\[
e^{\eta_0 (\varepsilon\ell_n/2)^{\frac{\alpha}{\alpha+1}}}b^{t_n^+-\delta_n}\gg e^{\l(\frac{\alpha+1}{\alpha}(1-2\varepsilon)\log b\r)^{\frac{\alpha}{\alpha+1}} \ell_n^{\frac{\alpha}{\alpha+1}}} \gg b^{t_n^+}\log(CA \ell_n).
\]
This suffices to conclude that
\[
\liminf_{n\rightarrow\infty} \frac{1}{\ell_n^{\frac{\alpha}{\alpha+1}}}\log[-\log\P(M_n\leq m_n-\ell_n)]\geq \l(\frac{\alpha+1}{\alpha}(1-2\varepsilon)\log b\r)^{\frac{\alpha}{\alpha+1}}
\]
for arbitrary small $\varepsilon>0$. This is exactly what we need.

\section{Small ball probability of $D_\infty$ in B\"ottcher case}\label{lefttailD}
This section is devoted  to proving Propositions \ref{weibulltailleft} and \ref{gumbeltailleft}. In fact, we only prove Proposition \ref{weibulltailleft} where $\P(X<-x)= \Theta(1)e^{-\lambda x^\alpha}$. And we feel free to omit the proof of Proposition \ref{gumbeltailleft} as it follows from  similar ideas.

Write $D$ for $D_{\infty}$ for simplicity. It is easy to see that for any time $n\geq1$,
\begin{eqnarray}\label{receq}
D\overset{a.s.}{=}\sum_{|u|=n}e^{\theta^*(S_u-nx^*)}D^{(u)},
\end{eqnarray}
where given $(S_u: |u|=n)$, $\l(D^{(u)}\r)_{\{|u|=n\}}$ are i.i.d. copies of $D$. It is known from \cite{M16} that there exists a constant $C_D>0$ such that as $x\rar+\infty$,
\begin{eqnarray}\label{Dtail}
\P(D>x)\sim \frac{C_D}{x}.
\end{eqnarray}
We only present the proof for \reff{Dweib}. \reff{Dgumb} can be obtained by similar arguments as the proof of Theorem \ref{gumbeltail}.

\subsection{Lower bound}
First observe from \eqref{receq} that for any $n\geq1$ and $\delta>0$,
\begin{align*}
\P(D<\ez)=&\P\l(\sum_{|u|=n}e^{\theta^*(S_u-nx^*)}D^{(u)}<\varepsilon\r)\\
\geq &\P\l(\forall |u|=n, e^{\theta^*(S_u-nx^*)}\leq \ez^{1+\delta}; \sum_{|u|=n}D^{(u)}<\ez^{-\delta}\r)
\end{align*}
where $\sum_{|u|=n}D^{(u)}=\Theta_\P(Z_n \log Z_n)$ because of \eqref{Dtail}. Therefore, by independence,
\begin{align*}
\P(D<\ez)\geq & \P\l(\forall |u|=n, e^{\theta^*(S_u-nx^*)}\leq \ez^{1+\delta}; Z_n=b^{n}; \sum_{|u|=n}D^{(u)}<\ez^{-\delta}\r)\\
=&\P\l(\forall |u|=n, S_u\leq (1+\delta)\frac{\log \ez}{\theta^*}+nx^*; Z_n=b^{n}\r)\P\l(\sum_{k=1}^{b^n}D_k<\ez^{-\delta}\r)
\end{align*}
where $D_k; k\geq1$ are i.i.d. copies of $D$. By weak law for triangular arrays(Theorem 2.2.6 in \cite{Durrett}), $\sum_{k=1}^{b^n}D_k=(C_D+o_\P(1))b^n\log(b^n)$. As long as we take $n=t_\ez\ll \frac{-\delta\log\ez}{\log b}$ so that $nb^{n}\ll \ez^{-\delta}$, $\P\l(\sum_{k=1}^{b^n}D_k<\ez^{-\delta}\r)=1+o(1)$. So for $\ez>0$ small enough,
\begin{align*}
\P(D<\ez)\geq &\frac{1}{2}\P\l(\forall |u|=t_\ez, S_u\leq (1+\delta)\frac{\log \ez}{\theta^*}+t_\ez x^*; Z_{t_\ez}=b^{t_\ez}\r)
\end{align*}
The sequel of this proof will be divided into two parts. Write $a_\ez:=-\log\ez$ for convenience.\\
{\bf Subpart 1: the case $\az>1$}. Choose $t_{\ez}=(\alpha-1)\frac{\log ((1+\dz)a_{\ez})}{\log b}$ and $a_k=\frac{ (b_\alpha-1)(1+2\dz)a_{\ez}}{\theta^*b_\alpha^k}$. Then $\frac{a_{\ez}}{\theta^*}\gg t_{\ez}x^*$ and $\sum_{k=1}^{t_{\ez}}(-a_k)=(1-b_\alpha^{-t_{\ez}})\frac{-(1+2\dz)a_{\ez}}{\theta^*}\leq (1+2\delta)\frac{\log \ez}{\theta^*}+t_\ez x^*$. As a consequence,
\begin{align}\label{Dcon02}
\P(D<\ez)\geq&\frac{1}{2}\P\l(\forall |u|=t_\ez, S_u\leq (1+\delta)\frac{\log \ez}{\theta^*}+t_\ez x^*; Z_{t_\ez}=b^{t_\ez}\r)\nonumber\\
\geq &\frac{1}{2}\P\l(Z_{t_{\ez}}=b^{t_{\ez}}; \forall |u|=k\in \{1,\cdots, t_{\ez}\}, X_u<-a_k \r)\nonumber\\
\geq &\exp\l\{-\lambda \l(\frac{(1+2\dz)a_{\ez}}{\theta^*}\r)^\alpha (b_\alpha-1)^{\alpha-1}- \Theta\l( \l(\frac{(1+\dz)a_{\ez}}{\theta^*}\r)^{\alpha-1}\r)\r\},
\end{align}
where the inequality follows from the same reasonings as \reff{probefore01}.
Letting $\varepsilon\downarrow0$ then $\delta\downarrow0$ implies that
\[
\liminf_{\varepsilon\rar0+}\frac{1}{(-\log\varepsilon)^{\alpha} }\log\P(D_{\infty}<\varepsilon)\geq-\frac{ \lz }{\l(\theta^*\r)^{\alpha} } \l(b^{\frac{1}{\alpha-1}}-1\r)^{\alpha-1}.
\]
{\bf Subpart 2: the case $\az=1$}. Choose $t_{\ez}=1$. Then it follows that
\begin{align}\label{Dcon02a}
\P(D<\ez)\geq &\frac{1}{2}\P\l(\forall |u|=t_\ez, S_u\leq (1+\delta)\frac{\log \ez}{\theta^*}+t_\ez x^*; Z_{t_\ez}=b^{t_\ez}\r)\nonumber\\
=& \P\l(Z_1=b; X_u\leq (1+\delta)\frac{\log \ez}{\theta^*}+x^*, \textrm{ for all }|u|=1 \r)\nonumber\\
\geq & p_b c^be^{\lambda b ((1+\delta)\frac{\log \ez}{\theta^*}+x^*)},
\end{align}
which implies
\[
\liminf_{\varepsilon\rar0+}\frac{1}{(-\log\varepsilon)^{\alpha} }\log\P(D_{\infty}<\varepsilon)\geq-\frac{ \lz (1+\dz)}{\theta^*} b.
\]
Then we obtain the lower bound by letting $\dz\rar0$.
\subsection{Upper bound}
\paragraph{Subpart 1: the case $\alpha>1$.}
Define
\[
U_0(t, \ell):=\{u\in {\cal T}: |u|=t \textrm{ and } {\theta^*(S_u-tx^*)}\geq\ell\}.
\]
We first consider the case $\az>1$. Observe that
\begin{align}\label{Dcon05}
\P\l(D<\ez\r)&=\P\l(\sum_{|u|=t}e^{\theta^*(S_u-tx^*)}D^{(u)}<\ez\r)\nonumber\\
&\leq \P\l( e^{\theta^*(S_u-tx^*)}D^{(u)}<\ez, \, \forall |u|=t\r)
\end{align}
We first obtain a rough bound. In fact,
\begin{align}\label{Dcon05+}
\P\l(D<\ez\r)&\leq \P\l(D^{(u)}<1,\,\forall u\in U_0(t, \log\ez); \#U_0(t,\log\ez)\geq  b^{t}\r)+ \P\l(\#U_0(t,\log\ez)< b^t\r)\nonumber\\
&\leq \P(D<1)^{ b^t}+ \P\l(Z_{t}\l(\l[\frac{\log\ez}{\theta^*}+tx^*,\infty\r)\r)< b^{t}\r)\nonumber\\
&\leq e^{-c b^t}+ \P\l(\sum_{|u|=t}1_{\{S_u\leq \frac{\log\ez}{\theta^*}+tx^*\}}\geq1\r),
\end{align}
because $\P(D<1)<1$ and $Z_t\geq b^t$. Similar to \eqref{upperbd1part2}, by Markov inequality,
\begin{equation*}
 \P\l(\sum_{|u|=t}1_{\{S_u\leq \frac{\log\ez}{\theta^*}+tx^*\}}\geq1\r)\leq e^{-\theta \frac{a_\ez}{\theta^*}+\Theta(t)}
\end{equation*}
for any $\theta>0$ such that $\E[e^{-\theta X}]<\infty$. We take $t=2\log a_\ez/\log b$ so that $b^t\gg a_\ez$ and $t\ll a_\ez$. Then, if $\alpha>1$, for $\ez>0$ small enough,
\begin{equation}\label{roughD}
\P\l(D<\ez\r)\leq e^{-2 a_\ez},
\end{equation}
where $a_\ez=-\log\ez$.

Now again by \eqref{Dcon05}, for any $\delta\in(0,1)$, $t_\ez\in\mathbb{N}_+$ and $\delta_\ez\in (0,t_\ez)\cap\mathbb{N}$,
\begin{align}\label{Dcon06}
&\P(D<\ez)\nonumber\\
\leq & \P\l(\sup_{u\in U_0(t_\ez, (1-\delta)\log\ez)}D^{(u)}< \ez^\delta, \#U_0(t_\ez, (1-\delta)\log\ez)\geq b^{t_\ez-\delta_\ez}\r)+ \P\l(\#U_0(t_\ez,(1-\delta)\log\ez)< b^{t_\ez-\delta_\ez}\r)\nonumber\\
\leq & \P(D<\ez^\delta)^{b^{t_\ez-\delta_\ez}}+\P\l(\#U_0(t_\ez,(1-\delta)\log\ez)< b^{t_\ez-\delta_\ez}\r).
\end{align}
By \eqref{roughD}, one sees that
\[
\P(D<\ez^\delta)^{b^{t_\ez-\delta_\ez}}\leq e^{2\delta b^{t_\ez-\delta_\ez}\log\ez}.
\]
On the other hand, for the second term on the r.h.s. of \reff{Dcon06}, by taking $t_\ez=\Theta(\log a_\ez)\ll a_\ez$,
\begin{align*}
\P\l(\#U_0(t_\ez,(1-\delta)\log\ez)< b^{t_\ez-\delta_\ez}\r)\leq &\P\l(\sum_{|u|=t_\ez}1_{S_u\geq t_\ez x^*+(1-\delta)\frac{\log \ez}{\theta^*}}<b^{t_\ez-\dz_\ez}\r)\\
\leq &\P\l(\sum_{|u|=t_\ez}1_{S_u\geq -(1-2\delta)\frac{a_\ez}{\theta^*}}<b^{t_\ez-\dz_\ez}\r).
\end{align*}
which by the same arguments for deducing \reff{keybd}, is less than
\[
Cb^{t_\ez+1}e^{-\lambda (A a_\ez)^\alpha}+(CA a_\ez)^{b^{t_\ez+1}}\exp\{-\lambda(b_\alpha-1)^{\alpha-1}(\frac{a_\ez}{\theta^*})^{\alpha}(1-4\delta)^\alpha(1+o_\ez(1))\}.
\]
Consequently, \eqref{Dcon06} becomes that
\[
\P(D<\ez)\leq e^{-2\delta b^{t_\ez-\delta_\ez}a_\ez}+Cb^{t_\ez+1}e^{-\lambda (A a_\ez)^\alpha}+(CA a_\ez)^{b^{t_\ez+1}}\exp\{-\lambda(b_\alpha-1)^{\alpha-1}(\frac{a_\ez}{\theta^*})^{\alpha}(1-4\delta)^\alpha(1+o_\ez(1))\}.
\]
Let $t_\ez=\frac{\alpha-1/3}{\log b}\log a_\ez$, $\delta_\ez=\frac{1/3}{\log b}\log a_\ez$ and $A\geq1$ be a large constant so that
\[
 b^{t_\ez-\delta_\ez}a_\ez\gg a_\ez^\alpha\gg b^{t_\ez}\log(CAa_\ez),\quad A^\alpha \geq \frac{2}{\theta^*}(b_\alpha-1)^{\alpha-1}.
\]
This implies that for any $\delta\in(0,1/4)$,
\[\limsup_{\ez\downarrow0}\frac{1}{(-\log \ez)^{\az}}\log\P\l(D<\ez\r)\leq -\frac{\lambda}{(\theta^*)^\alpha}(b_\alpha-1)^{\alpha-1}(1-4\delta)^\alpha,
\]
which gives the upper bound for the case $\az>1$.

\paragraph{Subpart 2: the case $\az=1$.}
For $\dz\in(0,1/b)$, similar to \reff{Dcon05+}, we have, for any $t_{\ez}\in (0, a_{\ez})\cap\mathbb{N}$,
\begin{align}\label{Dupbd2}
\P(D<\ez)\leq & \P\l(D^{(u)}<1,\,\forall u\in U_0(t_\ez, \log\ez); \#U_0(t_\ez,\log\ez)\geq  \delta b^{t_\ez}\r)+ \P\l(\#U_0(t_\ez,\log\ez)< \delta b^{t_\ez}\r)\nonumber\\
\leq & \P(D<1)^{\delta b^{t_\ez}}+\P\l(\#U_0(t_\ez,\log\ez)< \delta b^{t_\ez}\r)\nonumber\\
\leq & e^{-c\delta b^{t_\ez}}+ \P\l(\sum_{|u|=t_\ez}1_{S_u\geq t_\ez x^*-\frac{a_\ez}{\theta^*}}<\delta b^{t_\ez}\r).
\end{align}
Note that for $t_\ez=\Theta(\log a_\ez)\leq \delta' a_\ez$ with some $\delta'\in (0,1)$,
\begin{align*}
\P\l(\sum_{|u|=t_\ez}1_{S_u\geq t_\ez x^*-\frac{a_\ez}{\theta^*}}<\delta b^{t_\ez}\r)=&\P\l(Z_{t_\ez}[t_\ez x^*-\frac{a_\ez}{\theta^*},\infty)<\delta b^{t_\ez}\r)\\
\leq & \P\l(Z_{t_\ez}[-(1-\delta')\frac{a_\ez}{\theta^*},\infty)<\delta b^{t_\ez}\r),
\end{align*}
which by the same reasonings as \reff{upbd20part2}, yields that
\[
 \P\l(Z_{t_\ez}[-(1-\delta')\frac{a_\ez}{\theta^*},\infty)<\delta b^{t_\ez}\r)\leq e^{-\theta b (1-\delta')\frac{a_\ez}{\theta^*}+\Theta(t_\ez)},
\]
for any $\theta\in(0,\lambda)$. Going back to \eqref{Dupbd2}, one sees that
\[
\P(D<\ez)\leq e^{-c\delta b^{t_\ez}}+e^{-\theta b (1-\delta')\frac{a_\ez}{\theta^*}+\Theta(t_\ez)}.
\]
By taking $t_\ez=\frac{2}{\log b}\log a_\ez$ and $\theta=\lambda(1-\delta')$, one obtains that for any $\delta'\in(0,1)$,
\[
\limsup_{\ez\downarrow0}\frac{1}{-\log\ez}\log\P(D<\ez)\leq - \frac{\lambda b}{\theta^*}(1-\delta')^2.
\]
The the desired upper bound for the case $\az=1$ follows obviously.

\section{Moderate deviation in Schr\"oder case: proof of Theorem \ref{Schroder}}
Recall that $M_n:=\max_{|u|=n}\{S_u\}$. In Schr\"oder case, let $\max\emptyset:=-\infty$ for convenience. Then A\"id\'ekon in \cite{A13} proved that for any $x\in\mbb R$,
\begin{equation}
\lim_{n\rightarrow\infty}\P(M_n\leq m_n+x)=\E[e^{-C e^{-x}D_\infty}],
\end{equation}
where $C>0$ is some constant and $D_\infty$ is the a.s. limit of derivative martingale which is a.s. $0$ on the extinction set $\{\cal T<\infty\}$. Therefore,
\[
\lim_{n\rightarrow\infty}\P^s(M_n\leq m_n+x)=\E^s[e^{-C e^{-x}D_\infty}],
\]
which means that $M_n-m_n$ converges in law to some real-valued random variable under $\P^s$.

The idea to obtain Theorem \ref{Schroder} is borrowed from \cite{GH18}. We first recall some results in the literatures, which will be used later.
The idea to this proof is borrowed from \cite{GH18}. We first recall some results from existed literatures.
The following result is the well-known Cram\'er theorem; see Theorem 3.7.4 in \cite{DZ98}.
\begin{lemma}
Under the assumption \reff{expmom},  we have for any  $a>0$, as $n\rightarrow\infty$,
\begin{eqnarray}\label{LDPRW}
\lim_{n\rightarrow\infty}\frac{1}{n}\log\P(S_n\leq -a n)= -I(-a).
\end{eqnarray}
\end{lemma}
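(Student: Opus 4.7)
The plan is to prove the lemma by the classical Cram\'er argument: obtain matching exponential upper and lower bounds, the upper one from an exponential Chebyshev estimate and the lower one from an exponential change of measure (Cram\'er tilt).

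For the upper bound I would fix any $t\in(-K,0)$ and use that $x\mapsto e^{tx}$ is decreasing, so $\{S_n\leq -an\}\subset\{e^{tS_n}\geq e^{-tan}\}$. Markov's inequality together with the i.i.d.\ structure yields
\[
\P(S_n\leq -an)\leq e^{tan}\,\E[e^{tS_n}]=e^{n(ta+\log\psi(t))}.
\]
Taking $\tfrac{1}{n}\log$ and then the infimum over $t\in(-K,0)$ gives the bound $-\sup_{t\in(-K,0)}\{-ta-\log\psi(t)\}$. The key point is that this supremum coincides with $I(-a)$: since $\psi'(0)/\psi(0)=\E[X]=0$, the derivative at $t=0$ of $t\mapsto -ta-\log\psi(t)$ equals $-a<0$, so the function is strictly decreasing at the origin and its maximum on $(-K,K)$ is attained at some $t^*\in(-K,0)$ (at least when $I(-a)<\infty$).

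For the lower bound I would perform the exponential tilt at $t^*$: define a probability $\tilde\P$ by
\[
\frac{d\tilde\P}{d\P}\bigg|_{\sigma(X_1,\ldots,X_n)}=\prod_{i=1}^n\frac{e^{t^* X_i}}{\psi(t^*)},
\]
under which the $X_i$ are i.i.d.\ with mean $\tilde{\E}[X]=\psi'(t^*)/\psi(t^*)=-a$ and finite exponential moments. Inverting the change of measure and restricting to a thin shell around $-an$, for any $\varepsilon>0$,
\[
\P(S_n\leq -an)\geq\psi(t^*)^n\,\tilde{\E}\!\left[e^{-t^* S_n}\mathbf{1}_{\{-(a+\varepsilon)n\leq S_n\leq -an\}}\right]\geq e^{n(\log\psi(t^*)+t^*(a+\varepsilon))}\,\tilde\P\!\left(\tfrac{S_n}{n}\in[-(a+\varepsilon),-a]\right),
\]
where I used that $t^*<0$ implies $e^{-t^*S_n}\geq e^{t^*(a+\varepsilon)n}$ on the event. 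The weak law of large numbers under $\tilde\P$ (combined with the CLT to guarantee non-vanishing mass on the left half of a symmetric interval around the tilted mean) ensures $\tilde\P(S_n/n\in[-(a+\varepsilon),-a])$ stays bounded below by a positive constant in $n$. Taking $\liminf$ and then $\varepsilon\downarrow 0$ recovers $\log\psi(t^*)+t^*a=-I(-a)$.

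The main (mild) obstacle will be justifying that the optimizer $t^*$ of $I(-a)$ lies in the open interval $(-K,0)$ in all cases. If $a$ is so large that the supremum is only approached as $t\downarrow-K$, one must either tilt at an interior point $t_\eta\in(-K,0)$ achieving $-t_\eta a-\log\psi(t_\eta)\geq I(-a)-\eta$ and let $\eta\downarrow 0$ after the tilting argument, or, in the degenerate case $I(-a)=+\infty$, observe that the upper bound becomes $\P(S_n\leq -an)\leq e^{n(ta+\log\psi(t))}$ with $ta+\log\psi(t)\to-\infty$ as $t\downarrow-K$, so the probability decays super-exponentially and the claim holds trivially. Assumption \eqref{expmom} (finiteness of $\psi$ on the full interval $(-K,K)$) is precisely what makes both the tilt and this limiting procedure go through, and the strict negativity $t^*<0$ follows automatically from $-a<\E[X]=0$.
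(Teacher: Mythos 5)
The paper offers no proof of this lemma: it is quoted as the classical one-dimensional Cram\'er theorem, with a pointer to Theorem 3.7.4 of Dembo and Zeitouni \cite{DZ98}. Your proposal reconstructs the standard proof of that theorem --- Chernoff bound over negative tilts for the upper estimate, exponential change of measure recentring the walk at $-a$ for the lower estimate --- and in the main case, where $\sup_{t}\{-ta-\log\psi(t)\}$ is attained at an interior point $t^*<0$ of the effective domain of $\psi$, your argument is correct. Two small remarks: in the upper bound you should take the supremum over all $t\le 0$ with $\psi(t)<\infty$ rather than only over $(-K,0)$, since \eqref{expmom} does not assert that $K$ is maximal (the Chernoff bound is vacuously true where $\psi(t)=\infty$, so nothing is lost by enlarging the range); and convexity of $\log\psi$ together with $\E[X]=0$ does confine the optimizer to $t\le 0$, as you claim.

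The one step that would fail as written is your patch for the boundary case. If the supremum is only approached as $t$ decreases to the left edge of the domain of $\psi$, tilting at a near-optimal interior $t_\eta$ does not rescue the lower bound: at such a $t_\eta$ the tilted mean $\psi'(t_\eta)/\psi(t_\eta)$ is strictly larger than $-a$, so $\{S_n/n\in[-(a+\varepsilon),-a]\}$ is itself a lower-deviation event for the tilted walk, and its probability decays exponentially at essentially the rate you are trying to establish; the argument becomes circular rather than merely lossy, and letting $\eta\downarrow 0$ at the end does not repair it. The boundary case splits into two genuinely different situations: if $\psi$ is finite on all of $(-\infty,0)$ and no interior optimizer exists, then necessarily $-a\le \text{ess\,inf}\,X$ and the lemma is checked directly ($\P(S_n\le -an)$ equals $\P(X=-a)^n$ or $0$, matching $I(-a)=-\log\P(X=-a)$ or $+\infty$); if instead the domain of $\psi$ has a finite left endpoint $-K_0$ with $\psi(-K_0)<\infty$ and the tilted mean there still exceeds $-a$, the rate function is affine near $-a$ and the matching lower bound needs a genuinely different (truncation or one-big-jump) argument, as in Dembo--Zeitouni's proof of the general lower bound. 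Since the lemma is a quoted classical result this is a gap of completeness rather than of conception, but it is a real gap: the $t_\eta$ device does not close it.
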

The next two statements characterize asymptotic behaviors of lower deviation probability for Galton-Watson process; see Corollary 5 in \cite{FW07} or Proposition 3 in \cite{FW08}. Define $b_1:=\min\{k\geq1: p_k>0\}$ and recall $\gamma=\log f'(q)$.
\begin{lemma}
Assume (\ref{momoffspring}) and $0<p_0+p_1<1$. Then  for the minimal positive offspring number $b$,
\begin{eqnarray}\label{LDPGW}
\lim_{n\rightarrow\infty}\frac{1}{n}\log\P^s(Z_{n}=b_1)
=\lim_{n\rightarrow\infty}\frac{1}{n}\log\P(Z_{n}=b_1) =\gamma,
\end{eqnarray}
and for every subexponential sequence $a_n$ with $a_n\rar\infty$,
\begin{eqnarray}\label{LDPGWa}
\lim_{n\rightarrow\infty}\frac{1}{n}\log\P^s(Z_{n}\leq a_n)=\gamma.
\end{eqnarray}
\end{lemma}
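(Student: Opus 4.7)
The lemma splits into two claims: (i) the rate $\gamma=\log f'(q)$ for $\P(Z_n=b_1)$ and $\P^s(Z_n=b_1)$, and (ii) the same rate for $\P^s(Z_n\leq a_n)$ with $a_n$ subexponential. I would prove (i) by iterated generating-function analysis, obtaining the sharper asymptotic $\P(Z_n=b_1)\sim C f'(q)^n$; this immediately delivers the lower bound in (ii) via the inclusion $\{Z_n=b_1\}\subseteq\{Z_n\leq a_n\}$ once $a_n\geq b_1$. The upper bound in (ii) is the main work and will be handled through the backbone decomposition of the survival tree combined with Schr\"oder's functional equation.

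\textbf{Carrying out (i).} Let $f_n$ denote the $n$-th iterate of $f$. When $p_1>0$ (so $b_1=1$), one has $\P(Z_n=1)=f_n'(0)=\prod_{k=0}^{n-1}f'(f_k(0))$; since $f_k(0)\uparrow q$ geometrically and $f'$ is continuous at $q$, the series $\sum_k[\log f'(f_k(0))-\log f'(q)]$ converges and hence $\P(Z_n=1)\sim C f'(q)^n$. When $p_1=0$ (so $p_0>0$ and $b_1\geq 2$), the expansion $f(s)=p_0+p_{b_1}s^{b_1}+O(s^{b_1+1})$ together with $f_n=f_{n-1}\circ f$ yields
\[
\P(Z_n=b_1)=[s^{b_1}]f_{n-1}(f(s))=p_{b_1}\,f_{n-1}'(p_0)=p_{b_1}\prod_{k=0}^{n-2}f'(f_{k+1}(0)),
\]
and the same telescoping argument gives the rate $\gamma$. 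The $\P^s$ version follows at once from
\[
\P^s(Z_n=b_1)=\frac{1-q^{b_1}}{1-q}\,\P(Z_n=b_1),
\]
since conditional on $\{Z_n=b_1\}$ the $b_1$ subtrees rooted at generation $n$ are i.i.d.\ copies with extinction probability $q$.

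\textbf{Upper bound in (ii).} Introduce the backbone process $Z_n^{(s)}$ counting generation-$n$ individuals with an infinite line of descent. Under $\P^s$ this is a conservative Galton-Watson process with generating function $\tilde f(s)=(f(q+(1-q)s)-q)/(1-q)$, $\tilde p_0=0$, $\tilde p_1=\tilde f'(0)=f'(q)\in(0,1)$, and mean $m$. Since $Z_n^{(s)}\leq Z_n$, it suffices to bound $\P^s(Z_n^{(s)}\leq a_n)$. Apply Schr\"oder's theorem to $\tilde f$ at its attracting fixed point $0$ with multiplier $f'(q)$ to obtain $\tilde\sigma$, analytic near $0$, with $\tilde\sigma\circ\tilde f=f'(q)\tilde\sigma$, and the representation $\tilde f_n(s)=\tilde\sigma^{-1}(f'(q)^n\tilde\sigma(s))$ on a small disk. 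Fixing $r\in(0,1)$ in this disk yields $\tilde f_n(r)\leq Cf'(q)^n$, and Cauchy's coefficient estimate at radius $r$ then gives $\P(Z_n^{(s)}=k)\leq r^{-k}\tilde f_n(r)\leq Cr^{-k}f'(q)^n$. Summing over $k\leq a_n$,
\[
\P^s(Z_n^{(s)}\leq a_n)\leq \frac{C\,r^{-a_n}}{1-r}\,f'(q)^n=e^{\gamma n+O(a_n)},
\]
which delivers $\limsup n^{-1}\log\P^s(Z_n\leq a_n)\leq \gamma$.

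\textbf{Expected main obstacle.} The delicate step is the uniform Koenigs-type bound $\tilde f_n(r)\leq Cf'(q)^n$ and the accompanying question of how large $r$ may be chosen. Direct induction on the identity $\tilde f_n(r)=\tilde f(\tilde f_{n-1}(r))=f'(q)\,\tilde f_{n-1}(r)+O(\tilde f_{n-1}(r)^2)$ closes once $r$ is small enough to absorb the quadratic remainder, with $C$ arising as a convergent product $\prod_k(1+O(f'(q)^k))$. The genuinely subtle point is that the display above only gives the desired rate when $a_n=o(n)$; to cover the full subexponential range $\log a_n=o(n)$ one must either enlarge the Koenigs disk (valid under the moment assumption \eqref{momoffspring}, which ensures $\tilde\sigma$ extends analytically further into the unit disk) or equivalently invoke the small-ball estimate $\P(\tilde W\leq x)\sim Cx^{-\gamma/\log m}$ for the Seneta-Heyde martingale limit of the backbone, which via $\tilde Z_n/m^n\to\tilde W$ translates directly into the required bound for all subexponential $a_n$.
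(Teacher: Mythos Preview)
The paper does not prove this lemma at all: it is quoted directly from Fleischmann--Wachtel (Corollary~5 in \cite{FW07}, Proposition~3 in \cite{FW08}), so there is no ``paper's own proof'' to compare against. What you have written is therefore an independent argument, and its core is sound: part~(i) via the product formula $f_n'(\cdot)=\prod_k f'(f_k(\cdot))$ and geometric convergence $f_k(0)\to q$ is clean and correct, as is the passage to $\P^s$ via the factor $(1-q^{b_1})/(1-q)$; and for part~(ii) the backbone reduction together with the Chernoff/Cauchy bound $\P(Z_n^{(s)}=k)\le r^{-k}\tilde f_n(r)$ is the right mechanism.

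Two remarks on the obstacle you flag. First, you undersell your own argument slightly: for a Galton--Watson generating function with $\tilde p_0=0$ and $0<\tilde p_1<1$, the Schr\"oder limit $\tilde\sigma(s)=\lim_n \tilde f_n(s)/f'(q)^n$ exists and is finite for \emph{every} $s\in[0,1)$, with no appeal to the moment hypothesis \eqref{momoffspring}. Indeed, convexity of $\tilde f$ gives $\tilde f(u)\ge f'(q)\,u$, so $\tilde f_n(s)/f'(q)^n$ is nondecreasing, while the functional equation lets you define $\tilde\sigma(s)=f'(q)^{-n}\tilde\sigma(\tilde f_n(s))$ once $\tilde f_n(s)$ enters the local Koenigs disk. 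Hence the bound $\tilde f_n(r)\le \tilde\sigma(r)\,f'(q)^n$ already holds for every fixed $r<1$, and your display then yields the claim for all $a_n=O(n)$ by letting $r\uparrow 1$ after $n\to\infty$. Second, for genuinely super-linear subexponential $a_n$ (which is what the paper actually uses in Section~5.2, with threshold $\ell_n^3$ against time of order $\ell_n$), you do need more: the point is to let $r=r_n\uparrow 1$, and this requires controlling $\tilde\sigma(r)$ as $r\to 1$. The relation $\tilde\sigma(\phi(m\lambda))=f'(q)\,\tilde\sigma(\phi(\lambda))$ for the Laplace transform $\phi$ of the backbone martingale limit $\tilde W$ forces $\tilde\sigma(s)\asymp (1-s)^{\gamma/\log m}$ as $s\to 1$; taking $r_n=e^{-1/a_n}$ then gives $\P^s(Z_n^{(s)}\le a_n)\le C\,a_n^{1-\gamma/\log m}f'(q)^n$, and $\log a_n=o(n)$ finishes. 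This is essentially your fix~(b), routed through the Schr\"oder function rather than the small-ball probability of $\tilde W$ directly, and it is exactly the circle of ideas in the cited Fleischmann--Wachtel papers.
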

We also have the following fact whose proof can e.g. be found in Lemma 1.2.15 in \cite{DZ98}. For $i\geq 1$, let $(a^i_n)_{n\geq 1}$ be a sequence of positive numbers and $a^i = \limsup_{n\rar\infty}\frac{1}{n} \log a^i_n.$
Then, for all $k\geq 2$ it holds that
\begin{eqnarray}\label{inequality}
\limsup_{n\rar\infty}\frac{1}{n} \sum_{i=1}^k\log a^i_n=\max_{i\in\{1,\cdots,k\}}a^i.
\end{eqnarray}

\subsection{Lower bound}
For the lower bound, we consider the case that there are only $b$ particles at some generation $t_n$, and the random walk of one of those $b_1$-particles moves to the level $-at_n$. Furthermore, families induced by other $b_1-1$ particles at $t_n$-th generation die out before time $n$. For any $\varepsilon>0$ and $y\geq (x^*-\ell^*)\vee0$ such that $a=\ell^*-x^*+2\varepsilon+y^*>0$,  let $t_n=\lceil\frac{\ell_n}{\ell^*+y^*+\varepsilon}\rceil$. Note that $t_n< n$ for $n$ large enough.  By using Markov property at time $t_n$, we have for $n$ large enough,
\begin{align}
&\P^s(M_n\leq m_n-\ell_n)\cr&\geq \P^s(\max_{|u|=t_n}M_{n-t_n}^u\leq m_n+at_n-\ell_n|Z_{t_n}=b_1)\P(S_{t_n}\leq -a t_n)\P^s(Z_{t_n}=b_1)\cr&\geq \P(Z_{n-t_n}=0|Z_0=b_1-1)\P^s(M_{n-t_n}\leq m_n+at_n-\ell_n)\P(S_{t_n}\leq -a t_n)\P^s(Z_{t_n}=b_1)\cr
&\geq (q/2)^{b_1-1}\P^s(M_{n-t_n}\leq m_n+at_n-\ell_n)\P(S_{t_n}\leq -a t_n)\P^s(Z_{t_n}=b_1),
\end{align}
where in the last inequality we use the fact that $\lim_{n\rar\infty}\P(Z_{n-t_n}=0|Z_0=b_1-1)=q^{b_1-1}.$
\noindent Recall that $m_n=x^*n-\frac{3}{2\theta^*}\log n$. Then one can check for $n$ large enough,
\[
m_n+at_n-\ell_n- m_{n-t_n}=(\ell^*+2\varepsilon+y^*)t_n+\frac{3}{2\theta^*}\log\l(\frac{n-t_n}{n}\r)\geq0.
\]
Thus
\[\liminf_{n\rar\infty}\P^s(M_{n-t_n}\leq m_n+at_n-\ell_n)>0\]
and then for $n$ large enough,
\begin{align}
\P^s(M_n\leq m_n-\ell_n)&\geq C_1\P(S_{t_n}\leq -a t_n)\P^s(Z_{t_n}=b_1).
\end{align}
This, with (\ref{LDPRW}) and (\ref{LDPGW}) yields
\[
\liminf_{n\rar\infty}\frac{1}{\ell_n}\log \P^s(M_n\leq m_n-\ell_n)\geq \frac{-I(-a)-\gamma}{\ell^*+y^*+\varepsilon}.
\]
Letting $\varepsilon\downarrow0$, together with the fact that r.h.s. is independent of $y$, gives
\[
\liminf_{n\rar\infty}\frac{1}{\ell_n}\log \P^s(M_n\leq m_n-\ell_n)\geq \sup_{y\geq (x^*-\ell^*)\vee 0}\frac{-I(x^*-\ell^*-y^*)+\gamma}{\ell^*+y^*}.
\]
\qed
\subsection{Upper bound}

 Let \[T_n=\inf\{t\geq 0: Z_{t\ell_n}\geq \ell_n^3\}\]
and for $\delta>0$ and $\varepsilon>0$ small enough set
\[
F(\dz)=\l\{\dz, 2\dz,\cdots, \lceil \frac{1}{\dz(\ell^*\vee x^*)(1+2\varepsilon)}\rceil\delta\r\}.
\]
Then
\begin{align}\label{Schroder01}
&\P^s(M_n\leq m_n-\ell_n)\cr
&\leq \P^s\l(Z_{\frac{\ell_n}{(\ell^*\vee x^*)(1+2\varepsilon)}}\leq \ell_n^3\r)+ \sum_{t\in F(\dz)}\P^s\l(M_n\leq m_n-\ell_n; T_n\in (t-\dz, t]\r).
\end{align}
Note that by \reff{LDPGWa},
\begin{align}\label{Schroder02}\lim_{n\rar\infty}\frac{1}{\ell_n}\log\P^s\l(Z_{\frac{\ell_n}{(\ell^*\vee x^*)(1+2\varepsilon)}}\leq \ell_n^3\r)=\frac{\gamma}{(\ell^*\vee x^*)(1+2\varepsilon)}\end{align}
and
\begin{align}\label{Schroder03}
\limsup_{n\rar\infty}\frac{1}{\ell_n}\log\P^s\l(T_n\in (t-\dz, t]\r)\leq\lim_{n\rar\infty}\frac{1}{\ell_n}\log\P^s\l(Z_{(t-\dz)\ell_n}\leq \ell_n^3\r)=\gamma(t-\dz).
\end{align}
Meanwhile,
\begin{align*}
&\P^s(M_n\leq m_n-\ell_n| T_n\in (t-\dz, t])\cr
&=\P^s(\max_{|u|=t\ell_n}S_{u}+M_{n-t\ell_n}^u\leq m_n-\ell_n | T_n\in (t-\dz, t])\cr
&\leq\P^s(\max_{|u|=t\ell_n}S_{t\ell_n}+M_{n-t\ell_n}^u\leq m_n-\ell_n | T_n\in (t-\dz, t])\cr
&\leq \P(S_{t\ell_n}\leq m_n-(1-\varepsilon)\ell_n -m_{n-t\ell_n})+ \P^s(\max_{|u|=t\ell_n}M_{n-t\ell_n}^u\leq m_{n-t\ell_n}-\varepsilon\ell_n| T_n\in (t-\dz, t])\cr
&=:I_1+I_2,
\end{align*}
where in the first inequality, we use  Lemma 5.1 \cite{GH18} and the fact that $(S_u)$ and $(M_{n-t\ell_n}^u)$ are independent.
We first estimate $I_1$. For any $t\in F(\delta)$, one can check that  $tx^*-1+\varepsilon<0$ and
\begin{align*}
 m_n-(1-\varepsilon)\ell_n -m_{n-t\ell_n}&=\frac{3}{2\theta^*}\log\l(\frac{n-t\ell_n}{n}\r)+(tx^*-1+\varepsilon)\ell_n\cr
 &\leq (tx^*-1+\varepsilon)\ell_n.
\end{align*}
Thus
\[\limsup_{n\rar\infty}\frac{1}{\ell_n}\log\P(S_{t\ell_n}\leq m_n-(1-\varepsilon)\ell_n -m_{n-t\ell_n})\leq -tI\l(\frac{tx^*-1+\varepsilon}{t}\r).
\]
Next, we turn to $I_2$.
\begin{align*}
I_2
&=\E^s[ \P^s(M_{n-t\ell_n}\leq m_{n-t\ell_n}-\varepsilon\ell_n )^{Z_{t\ell_n}}| T_n\in (t-\dz, t]] \cr
&\leq \P^s(M_{n-t\ell_n}\leq m_{n-t\ell_n}-\varepsilon\ell_n )^{\ell_n^2}+ \P^s(Z_{t\ell_n}\leq \ell_n^2|T_n\in (t-\dz, t]).
\end{align*}
Notice that as $\P^s(T_n\in (t-\dz, t])\geq \frac{1-q^{n^3}}{1-q}\P(T_n\in (t-\dz, t])$,
\begin{align*}
\P^s(Z_{tn}\leq \ell_n^2|T_n\in (t-\dz, t])\leq (1-q+o(1))\P(\exists k\leq \dz n,\, Z_k\leq \ell_n^2 |Z_0=\ell_n^3)\leq C_{\ell_n^3}^{\ell_n^2}q^{\ell_n^3-\ell_n^2}
\end{align*}
and  by Theorem 1.1 in \cite{A13}, we have there exists $c^*\in (0, \infty]$ such that
\[\lim_{n\rar\infty}\P^s(M_{n-t\ell_n}\leq m_{n-t\ell_n} -\varepsilon\ell_n)=e^{-c^*}<1.\]
Thus $I_2\leq e^{-c_1\ell_n^2}$ and hence
\begin{align}\label{Schroder04}
\limsup_{n\rar\infty}\frac{1}{\ell_n}\log\P^s(M_n\leq m_n-\ell_n| T_n\in (t-\dz, t])\leq -tI\l(\frac{tx^*-1+\varepsilon}{t}\r).
 \end{align}
Going back to \reff{Schroder01}, together with \reff{Schroder02}, \reff{Schroder03} and \reff{inequality}, one has
\begin{align*}
&\limsup_{n\rar\infty}\frac{1}{\ell_n}\log\P^s(M_n\leq m_n-\ell_n)\cr&\leq \frac{\gamma}{(\ell^*\vee x^*)(1+2\varepsilon)}\vee\sup_{t\in F(\delta)} \l((t-\delta)\gamma-tI\l(\frac{tx^*-1+\varepsilon}{t}\r)\r),
\end{align*}
which by letting $\varepsilon \downarrow0$ and $\dz \downarrow0$ implies
\begin{align}
&\limsup_{n\rar\infty}\frac{1}{\ell_n}\log\P^s(M_n\leq m_n-\ell_n)\cr&\leq \sup_{t\in \l(0, \frac{1}{\ell^*\vee x^*}\r)} \l(t\gamma-tI\l(\frac{tx^*-1}{t}\r)\r)\cr
&= \sup_{y\geq (x^*-\ell^*)_+} \frac{-I(x^*-\ell^*-y^*)+\gamma}{\ell^*+y^*}.
\end{align}
We have completed the proof. \qed

\bigskip\bigskip

{\bf Acknowledgement.} We are grateful to Elie A\"{i}d\'{e}kon and Yueyun Hu for enlightening discussions. Hui He is supported by NSFC (No. 11671041, 11531001).

\bigskip\bigskip

\noindent{\small Xinxin Chen}

\noindent{Institut Camille Jordan, C.N.R.S. UMR 5208, Universite Claude Bernard Lyon 1, 69622 Villeurbanne Cedex, France.}

\noindent{E-mail: {\tt xchen@math.univ-lyon1.fr}}

\bigskip

\noindent{\small Hui He}

\noindent{School of Mathematical Sciences, Beijing Normal University,
Beijing 100875, People's Republic of China.}

\noindent{E-mail: {\tt hehui@bnu.edu.cn}}

\end{document}